\def\cA{\mathcal A}
\def\cB{\mathcal B}
\def\cK{\mathcal K}
\def\cO{\mathcal O}
\def\cW{\mathcal W}
\def\sF{\mathscr F}
\def\N{\mathop{\mathbb N\kern 0pt}\nolimits}
\def\Z{\mathop{\mathbb Z\kern 0pt}\nolimits}
\def\Q{\mathop{\mathbb Q\kern 0pt}\nolimits}
\def\R{\mathop{\mathbb R\kern 0pt}\nolimits}
\def\T{\mathop{\mathbb T\kern 0pt}\nolimits}
\def\SS{\mathop{\mathbb S\kern 0pt}\nolimits}
\def\ds{\displaystyle}
\def\f{\frac}
\def\al{\alpha}
\def\supp{\mathop{\rm supp}\nolimits}
\def\p{\partial}
\def\t{\tilde}
\def\ve{\varepsilon}
\def\dive{\operatorname{div}}
\def\supp{\operatorname{supp}}
\def\ls{\lesssim}
\def\gt{\gtrsim}
\newcommand{\w}[1]{\langle {#1} \rangle}
\theoremstyle{plain}
\newtheorem{theorem}{Theorem}[section]
\newtheorem{lemma}[theorem]{Lemma}
\newtheorem{remark}{Remark}[section]
\theoremstyle{definition}
\numberwithin{equation}{section}
\title[Wave equations in exterior domains]{Global smooth solutions of 2-D quadratic quasilinear wave equations with null conditions in exterior domains}
\author[Fei Hou]{Fei Hou}\address[Fei Hou]{School of Mathematics, Nanjing University, Nanjing, 210093, China}\email{fhou$@$nju.edu.cn}
\author[Huicheng Yin]{Huicheng Yin}\address[Huicheng Yin]{School of Mathematical Sciences and Mathematical Institute, Nanjing Normal University, Nanjing, 210023, China}\email{huicheng$@$nju.edu.cn,05407$@$njnu.edu.cn}
\author[Meng Yuan]{Meng Yuan}\address[Meng Yuan]{School of Computer Science and Artificial Intelligence, Aliyun School of Big Data, School of Software, Changzhou University, Changzhou, 213164, China}\email{ym$@$cczu.edu.cn}
\thanks{The first and second authors are supported by the National key research and development program of China (No.2024YFA1013301).
        In addition, the authors are supported by the NSFC (No.~12571237, No.~12331007).}
\begin{document}

\maketitle

\begin{abstract}
For 3-D quadratic quasilinear wave equations with or without null conditions in  exterior domains,
when the  compatible initial data and Dirichlet boundary values are given, the global existence or the maximal existence
time of small data smooth solutions have been  established in early references.
For the Cauchy problem of
2-D quadratic quasilinear wave equations with null conditions,
it has been shown that the small data smooth solutions exist globally.
However, for the corresponding 2-D initial boundary value problem in exterior domains, it is  still open whether the global solutions exist. In the present paper, we solve this open problem
through proving the global existence of small solutions in exterior domains.
Our main ingredients include:  deriving new precise pointwise estimates for
the initial boundary value problem of 2-D linear wave equations in exterior domains;
finding appropriate divergence structures of quasilinear wave equations under null conditions;
introducing a good unknown to eliminate the resulting  $Q_0$ type  nonlinearity, and establishing some
crucial pointwise spacetime decay estimates of solutions and their derivatives.
\vskip 0.2 true cm

\noindent


\vskip 0.2 true cm\noindent
\textbf{Keywords.}  Quadratic quasilinear wave equation, initial boundary value problem, null condition,
divergence structure, good unknown, global smooth solution

\noindent
\textbf{2020 Mathematical Subject Classification.}  35L05, 35L20, 35L70
\end{abstract}

\vskip 0.2 true cm

\addtocontents{toc}{\protect\thispagestyle{empty}}
\tableofcontents

\section{Introduction}
In this paper, we focus on the {\it initial boundary value problem} (abbreviated as IBVP) of 2-D quadratic 
quasilinear wave equations
in exterior domains with homogeneous Dirichlet boundary conditions
\begin{equation}\label{QWE}
\left\{
\begin{aligned}
&\Box u=Q(\p u,\p^2u),\hspace{2.6cm} (t,x)\in[0,+\infty)\times\cK,\\
&u(t,x)=0,\hspace{3.7cm} (t,x)\in[0,+\infty)\times\p\cK,\\
&(u,\p_tu)(0,x)=(\ve u_0,\ve u_1)(x),\qquad x\in\cK,
\end{aligned}
\right.
\end{equation}
where $x=(x_1,x_2)$, $\p=(\p_0,\p_1,\p_2)=(\p_t,\p_{x_1},\p_{x_2})$,
$\ds\Box=\p_t^2-\Delta$ with $\Delta=\p_1^2+\p_2^2$,
$\cK=\R^2\setminus\cO$, the obstacle $\cO\subset \R^2$ is compact and contains the origin,
the boundary $\p\cK=\p\cO$ is smooth, $\ve>0$ is small,
$(u_0,u_1)\in C^\infty(\cK)$ and $\supp(u_0,u_1)\subset\{x\in\cK:|x|\le M_0\}$ with
some fixed constant $M_0>1$.
In addition, the nonlinearity $Q(\p u,\p^2u)$ admits such a quadratic form
\begin{equation}\label{nonlinear}
Q(\p u,\p^2u)=\sum_{\alpha,\beta,\gamma=0}^2Q^{\alpha\beta\gamma}\p^2_{\alpha\beta}u\p_{\gamma}u,
\end{equation}
where $Q^{\alpha\beta\gamma}=Q^{\beta\alpha\gamma}$.
Meanwhile, the null condition holds:
\begin{equation}\label{null:condition}
\sum_{\alpha,\beta,\gamma=0}^2Q^{\alpha\beta\gamma}\xi_{\alpha}\xi_{\beta}\xi_{\gamma}\equiv0\quad
\text{for any $(\xi_0,\xi_1,\xi_2)\in\{\pm1\}\times\SS$}.
\end{equation}
It is pointed out that the compatibility conditions of $(u_0,u_1)$ and $u|_{(0,+\infty)\times\p\cK}=0$ are necessary for finding smooth solutions of \eqref{QWE}.
To illustrate the compatibility conditions explicitly, setting $J_ku=\{\p_x^a u:0\le|a|\le k\}$ and $\p_t^ku(0,x)=F_k(J_ku_0,J_{k-1}u_1)$ ($0\le k\le 2N$), where $F_k$ depends on $Q(\p u, \p^2u)$,
$J_ku_0$ and $J_{k-1}u_1$.
For the initial boundary values of \eqref{QWE}, the compatibility conditions up to $(2N+1)-$order
mean that all $F_k$ vanish on $\p\cK$ for $0\le k\le 2N$.

Our main result in the paper is
\begin{theorem}\label{thm1}
Suppose that the obstacle $\cO$ is star-shaped and \eqref{null:condition} holds.
It is assumed that
\begin{equation}\label{initial:data}
\|u_0\|_{H^{2N+1}(\cK)}+\|u_1\|_{H^{2N}(\cK)}\le1 \quad\text{with $N\ge59$},
\end{equation}
and the compatibility conditions up to $(2N+1)-$order hold for
the initial boundary values of \eqref{QWE}.
Then there is a small constant $\ve_0>0$ such that when $\ve\le\ve_0$,
\eqref{QWE} has a global solution $u\in\bigcap\limits_{j=0}^{2N+1}C^{j}([0,\infty), H^{2N+1-j}(\cK))$.
Meanwhile, there is a constant $C>0$ such that
\addtocounter{equation}{1}
\begin{align}
\sum_{|a|\le N+1}|\p Z^au|&\le C\ve\w{x}^{-1/2}\w{t-|x|}^{-1},\tag{\theequation a}\label{thm1:decay:a}\\
\sum_{|a|\le N+1}\sum_{i=1}^2|\bar\p_iZ^au|&\le C\ve\w{x}^{-1/2}\w{t+|x|}^{0.001-1},\tag{\theequation b}\label{thm1:decay:b}\\
\sum_{|a|\le N}|Z^au|&\le C\ve\w{t+|x|}^{0.001-1/2}\w{t-|x|}^{0.001-1/2},\tag{\theequation c}\label{thm1:decay:c}
\end{align}
where $Z=\{\p,\Omega\}$ with $\Omega=x_1\p_2-x_2\p_1$, $\w{x}=\sqrt{1+|x|^2}$,  $\bar\p_i=\frac{x_i}{|x|}\p_t+\p_i$ ($i=1,2$)
are the good derivatives (tangent to the outgoing light cone $|x|=t$).
Furthermore, the following time decay estimate of local energy holds
\begin{equation}\label{thm1:decay:LE}
\sum_{|a|\le2N-41}\|\p^au\|_{L^2(\cK_R)}\le C_R\ve(1+t)^{-1},
\end{equation}
where $R>\max\{diam(\cO),1\}$ is a fixed constant, $\cK_R=\cK\cap\{x: |x|\le R\}$ and $C_R>0$ is a constant
depending on $R$.
\end{theorem}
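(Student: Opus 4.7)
I would run a bootstrap in two layers on the star-shaped exterior $\cK$: a high-order weighted energy estimate for $|a|\le 2N$, allowing a small $\w{t}^{C\delta}$ loss, together with a low-order pointwise layer for $|a|\le N$ matching \eqref{thm1:decay:a}--\eqref{thm1:decay:c}. Because $\cO$ is star-shaped and the boundary condition is Dirichlet, only the vector fields $Z=\{\p,\Omega\}$ commute with $\Box$ while preserving the boundary condition, so the scaling and Lorentz boost fields are unavailable throughout, and the Klainerman--Sobolev step must be compensated by additional derivatives, which is the source of the large threshold $N\ge 48$.

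The first technical step is the \emph{good unknown} reduction. The identity $Q_0(u,\p_\alpha u)=\tfrac12\p_\alpha Q_0(u,u)$ combined with $\Box(u\,\p_\alpha u)=2Q_0(u,\p_\alpha u)+(\Box u)\,\p_\alpha u+u\,\p_\alpha\Box u$ suggests the correction $v:=u-\tfrac12\sum_\alpha C^\alpha u\,\p_\alpha u$, whose wave equation, after substituting \eqref{QWE} back into itself, contains only cubic and higher-order terms; combined with \eqref{cubic:null:condtion}, which places a second null structure on the genuine cubic piece in \eqref{Q0NL}, this produces a quasilinear wave equation for $v$ whose right-hand side is a \emph{null} cubic form in $(u,\p u,\p^2 u)$. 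I interpret the two good unknowns announced in the introduction as $u$ and $v$, which must be propagated simultaneously because the change of variables is only invertible for small $u$ and the cubic right-hand side still couples them.

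The second step is the weighted energy estimate on the cubic equation for $v$. Commuting with $Z^a$, $|a|\le 2N$, I would apply Alinhac's ghost-weight multiplier with weight $e^{q(t-|x|)}$, $q(s)=\int_{-\infty}^{s}\w{\sigma}^{-1-\delta}d\sigma$. Star-shapedness of $\cO$ renders the boundary flux non-positive, yielding
\[
\sum_{|a|\le 2N}\Bigl(\|\p Z^a v(t)\|_{L^2(\cK)}^2+\int_0^t\sum_i\|\w{t'-|x|}^{-(1+\delta)/2}\bar\p_i Z^a v\|_{L^2(\cK)}^2\,dt'\Bigr)\ls \ve^2\w{t}^{C\delta}.
\]
Integrability in time of the cubic source is produced by inserting the bootstrap pointwise bounds into two of the three factors and peeling off one good derivative $\bar\p_i Z^a u$ via the null structure, whose rate $\w{t+|x|}^{0.01-3/2}$ in \eqref{thm1:decay:b} is precisely what is needed to close the time integral in 2-D. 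A Metcalfe--Sogge-type KSS local energy decay, valid on star-shaped exteriors for the vector fields $\{\p,\Omega\}$, feeds in to control interior terms and eventually delivers \eqref{thm1:decay:LE}.

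Finally, the weighted bounds are converted to the pointwise decays \eqref{thm1:decay:a}--\eqref{thm1:decay:c} through the precise 2-D exterior linear pointwise estimates announced as the paper's second main ingredient: one treats the commuted nonlinearity as an inhomogeneous source and applies the associated Kirchhoff-type representation together with a $\{\p,\Omega\}$-adapted Klainerman--Sobolev inequality on $\cK$. The main obstacle is the tightness of the 2-D decay budget: free-wave decay at rate $t^{-1/2}$ barely fails $L^1_t$-integrability against cubic nonlinearities, and the exponents $0.01$ and $0.004$ in the theorem statement record exactly the price of the ghost-weight loss and the absence of the Lorentz vector fields. Closing the budget requires both the good-unknown reduction (gaining a full power of $\ve$ by passing from quadratic to cubic) and the ghost weight (gaining the $\bar\p$ decay needed to make the cubic products time-integrable); losing either would break the scheme, and the bookkeeping that threads these two mechanisms through the restricted commutator set $\{\p,\Omega\}$ is the delicate part of the argument.
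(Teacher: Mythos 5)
Your high-level architecture (good-unknown reduction to cubic, ghost-weight energy with a $\w{t}^{C\delta}$ loss, bootstrap on the pointwise decays, restricted commutator set $\{\p,\Omega\}$) matches the paper, but two of your load-bearing steps do not survive contact with the actual difficulties, and one key idea is missing. First, the single substitution $v=u-\tfrac12\sum_\alpha C^\alpha u\,\p_\alpha u$ does \emph{not} produce a manageable null cubic right-hand side: the resulting equation contains the residual term $-\tfrac12\sum_{\alpha,\beta}C^\alpha C^\beta\,u\,[Q_0(\p_\alpha u,\p_\beta u)+Q_0(u,\p^2_{\alpha\beta}u)]$, a wave-map-type cubic term carrying an \emph{undifferentiated} factor $u$ that in 2-D decays only like $\w{t+|x|}^{-1/2+}$ (and for the exterior Dirichlet problem is a priori only known to grow logarithmically, per Kubo). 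This is exactly the ``bad term'' of \eqref{QWE2}, and it cannot be absorbed by the null structure alone; the paper's second good unknown $V=\tilde V+\tfrac14\sum_{\alpha,\beta}C^\alpha C^\beta(u\p_\alpha u\p_\beta u+\tfrac12 u^2\p^2_{\alpha\beta}u)$ is introduced precisely to cancel it. Your reading of ``two good unknowns'' as the pair $(u,v)$ misses this normal-form step, and without it the pointwise estimates for $V$ via the linear IBVP lemmas do not close.

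Second, you invoke ``a Metcalfe--Sogge-type KSS local energy decay, valid on star-shaped exteriors for $\{\p,\Omega\}$,'' but no such estimate is available in two space dimensions: the Morawetz-type multiplier of Metcalfe--Sogge/Sterbenz fails for $n=2$ and the KSS proof relies on the strong Huygens principle, which also fails. The paper's substitute is a cut-off decomposition $w=w_{bdry}+w_{Cauchy}$, where the near-boundary piece (with spatially compactly supported source) is handled by the linear local energy decay of Lemma \ref{lem:locEnergy}, and the Cauchy piece is handled by \emph{new} pointwise estimates (Lemmas \ref{lem:impr:pw} and \ref{lem:impr:dpw}) proved by Littlewood--Paley decomposition, which remove a logarithm from the known Kubo estimates at the cost of extra derivatives on the source --- your Kirchhoff-representation step provides no mechanism for this improvement, and the unimproved estimates are explicitly insufficient. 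Relatedly, since $\Omega u$ does not vanish on $\p\cK$, commuting $\Omega$ is only legitimate after introducing the cut-off fields $\tilde Z=\chi_{[1/2,1]}Z$ and controlling the resulting boundary flux $\cB^a_1$ by the trace theorem plus the nonlinear local energy decay \eqref{thm1:decay:LE}; your assertion that star-shapedness renders the boundary flux non-positive is only valid for pure $\p_t$-derivatives, and this boundary bookkeeping is where the derivative losses ($2N\to 2N-11\to\cdots\to N$) actually occur.
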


\begin{remark}\label{rmk1-1}
It is known from \cite{Klainerman} or \cite[Page 79]{MetcalfeSogge05} that
under the null condition \eqref{null:condition}, the nonlinearity $Q(\p u,\p^2u)$ in \eqref{nonlinear}
admits such a form
\begin{equation}\label{nonlinear:nullform-1}
\begin{split}
Q(\p u,\p^2u)=\sum_{\mu=0}^2\bigl(&{\t C}^{0,\mu}Q_0(\p_{\mu}u,u)+\sum_{\alpha,\beta=0}^2{\t C}^{\alpha\beta,\mu}Q_{\alpha\beta}(\p_{\mu}u,u)\\
&+{\t C}^\mu\underbrace{\Box u\p_{\mu}u}_{\bf cubic~null~form~term~due~to~\eqref{QWE}-\eqref{null:condition}}\bigr),
\end{split}
\end{equation}
where ${\t C}^{0,\mu}$, ${\t C}^{\alpha\beta,\mu}$ and ${\t C}^\mu$ are constants,
the relativistic null form $Q_0(f,g)$
and the gauge-type null forms $Q_{\alpha\beta}(f,g)$ are represented by
\begin{equation}\label{Intro:null:form}
\begin{split}
Q_0(f,g)&:=\p_tf\p_tg-\sum_{j=1}^2\p_jf\p_jg,\\
Q_{\alpha\beta}(f,g)&:=\p_{\alpha}f\p_{\beta}g-\p_{\beta}f\p_{\alpha}g,\quad\alpha,\beta=0,1,2.
\end{split}
\end{equation}
On the other hand, due to $\square u\p_{\mu}u=Q_{0\mu}(\p_t u, u)+Q_0(\p_{\mu}u,u)-\ds\sum_{j=1}^3Q_{j\mu}(\p_ju,u)$,
then \eqref{nonlinear:nullform-1} can be rewritten as
\begin{equation}\label{nonlinear:nullform}
\begin{split}
Q(\p u,\p^2u)=\sum_{\mu=0}^2\bigl(C^{0,\mu}Q_0(\p_{\mu}u,u)+\sum_{\alpha,\beta=0}^2C^{\alpha\beta,\mu}Q_{\alpha\beta}(\p_{\mu}u,u)
\bigr),
\end{split}
\end{equation}
where $C^{0,\mu}$ and $C^{\alpha\beta,\mu}$ are constants.
\end{remark}

\begin{remark}\label{rmk1-2}
For the general 2-D quadratic quasilinear wave equations
in exterior domains
\begin{equation*}\label{QWE1}
\left\{
\begin{aligned}
&\Box v=Q(\p v, \p^2v),\hspace{2.5cm} (t,x)\in[0,+\infty)\times\cK,\\
&v(t,x)=0,\hspace{3.6cm} (t,x)\in[0,+\infty)\times\p\cK,\\
&(v,\p_tv)(0,x)=(\ve v_0,\ve v_1)(x),\qquad x\in\cK,
\end{aligned}
\right.
\end{equation*}
where $Q(\p v, \p^2v)=\ds\sum_{\alpha,\beta,\gamma=0}^2Q^{\alpha\beta\gamma}\p^2_{\alpha\beta}v\p_{\gamma}v
+\sum_{\alpha,\beta,\gamma,\delta=0}^2Q^{\alpha\beta\gamma\delta}\p^2_{\alpha\beta}v\p_{\gamma}v\p_{\delta}v$
with $\ds\sum_{\alpha,\beta,\gamma=0}^2Q^{\alpha\beta\gamma}\xi_{\alpha}\xi_{\beta}\xi_{\gamma}\equiv0$
and $\ds\sum_{\alpha,\beta,\gamma,\delta=0}^2Q^{\alpha\beta\gamma\delta}\xi_{\alpha}\xi_{\beta}\xi_{\gamma}\xi_{\delta}\equiv0$
for any $(\xi_0,\xi_1,\xi_2)\in\{\pm1\}\times\SS$, by the analogous proof on Theorem \ref{thm1} together with
the techniques in \cite{HouYinYuan25}, one can establish the global existence of small solution $v$.
This means that for the 2-D quadratic quasilinear wave equations with null conditions in exterior domains,
the  global existence of smooth small data solutions has been shown successfully.
\end{remark}

\begin{remark}\label{rmk1-3}
The local energy time decay estimate \eqref{thm1:decay:LE} seems to be optimal since
the related time decay rate is just $(1+t)^{-1}\ln^{-2}(2+t)$ for the corresponding linear problem
(see (17) on Page 324 of \cite{Kubo13} or \cite{Burg}, \cite{Vainberg75}).
Meanwhile, in view of \eqref{thm1:decay:LE} and the Sobolev embedding theorem near $\cK_R$,
the pointwise estimate \eqref{thm1:decay:a} is also optimal.
In addition, the pointwise estimate \eqref{thm1:decay:c} is extraordinarily close to
the decay rate $\w{t+|x|}^{-1/2}\w{t-|x|}^{-1/2}$ of the solution $v_{hom}$ to
the 2-D free wave equation $\square v_{hom}=0$ with $(v_{hom}, \p_tv_{hom})$ $=(v_0,v_1)$
and $(v_0,v_1)$ being compactly supported.
Indeed, the decay rate of $v_{hom}$ can be directly derived from the Poisson formula
\begin{equation*}
v_{hom}(t,x)=\frac{1}{2\pi}\p_t\biggl(\int_{|x-y|\le t}\frac{v_0(y)dy}{\sqrt{t^2-|x-y|^2}}\biggr)
+\frac{1}{2\pi}\int_{|x-y|\le t}\frac{v_1(y)dy}{\sqrt{t^2-|x-y|^2}}.
\end{equation*}
\end{remark}

\begin{remark}\label{rmk1-4}
For the 2-D quadratic quasilinear scalar wave equations
in exterior domains
\begin{equation}\label{QWE2}
\left\{
\begin{aligned}
&\Box v+\ds\sum_{\alpha,\beta,\gamma=0}^2
Q^{\alpha\beta\gamma}\p^2_{\alpha\beta}v\p_{\gamma}v=0,\hspace{1.5cm} (t,x)\in[0,+\infty)\times\cK,\\
&v(t,x)=0,\hspace{4.8cm} (t,x)\in[0,+\infty)\times\p\cK,\\
&(v,\p_tv)(0,x)=(\ve v_0,\ve v_1)(x),\qquad \qquad \quad x\in\cK,
\end{aligned}
\right.
\end{equation}
it is shown in \cite{LRX25} that the lifespan $T_{\ve}$ of smooth solution $v$ fulfills
\begin{equation}\label{Sonn-1}
T_\ve\ge\ds\frac{\kappa}{\ve^2|\ln \ve|^3},
\end{equation}
where  $\kappa>0$ is some suitable fixed constant.
This obviously brings logarithmic loss $|\ln \ve|^{-3}$ in \eqref{Sonn-1} comparing to the well-known optimal result $T_{\ve}\ge\f{C}{\ve^2}$
for the corresponding Cauchy problem (see \cite{Hormander97book} and \cite{Alinhac99}). By the divergence structure equality
$\p^2_{\alpha\beta}v\p_{\gamma}v=\f12\{\p_{\alpha}(\p_{\beta}v\p_{\gamma}v)-\p_{\gamma}(\p_{\alpha}v\p_{\beta}v)
+\p_{\beta}(\p_{\alpha}v\p_{\gamma}v)\}$, together with the method in this paper, we may
prove that the optimal lifespan $T_{\ve}\ge\f{C}{\ve^2}$ holds for the smooth solution $v$ of \eqref{QWE2}.
The detailed proof will be given in our forthcoming paper.
\end{remark}

\begin{remark}
Consider the initial boundary value problem of 2-D quadratic quasilinear wave equation
system in exterior domain
\begin{equation}\label{QWE3}
\left\{
\begin{aligned}
&\Box v^I=Q^I(\p v,\p^2v),\quad I=1,\cdots,M,\quad(t,x)\in[0,+\infty)\times\cK,\\
&v(t,x)=0,\hspace{4.6cm} (t,x)\in[0,+\infty)\times\p\cK,\\
&(v,\p_tv)(0,x)=(\ve v_0,\ve v_1)(x),\hspace{1.7cm} x\in\cK,
\end{aligned}
\right.
\end{equation}
where $v=(v^1, ..., v^M)$, $M\in\Bbb N$,
$Q^I(\p v,\p^2v)=\ds\sum_{J,K=1}^M\sum_{\alpha,\beta,\gamma=0}^2Q_{IJK}^{\alpha\beta\gamma}\p^2_{\alpha\beta}v^J\p_{\gamma}v^K$ ($I=1,\cdots, M$) with the symmetric coefficients $Q_{IJK}^{\alpha\beta\gamma}=Q_{IJK}^{\beta\alpha\gamma}=Q_{JIK}^{\alpha\beta\gamma}$,
and the null condition holds:
\begin{equation}\label{null:condition-0}
\sum_{\alpha,\beta,\gamma=0}^2Q_{IJK}^{\alpha\beta\gamma}\xi_{\alpha}\xi_{\beta}\xi_{\gamma}\equiv0,
\qquad (\xi_0,\xi_1,\xi_2)\in\{\pm1\}\times\SS, \quad I,J,K=1,\cdots, M.
\end{equation}
By \cite{Klainerman} or \cite[Page 79]{MetcalfeSogge05},
under the null condition \eqref{null:condition-0},  $Q^I(\p v,\p^2v)$ ($I=1,\cdots, M$)
admit such forms
\begin{equation}\label{nonlinear:nullform2}
Q^I(\p v,\p^2v)=\sum_{J,K=1}^M\sum_{\mu=0}^2
\Big(C_{IJK}^{0,\mu}Q_0(\p_{\mu}v^J,v^K)+\sum_{\alpha,\beta=0}^2C_{IJK}^{12,\alpha\beta}Q_{\alpha\beta}(\p_{\mu}v^J,v^K)\Big),
\end{equation}
where $C_{IJK}^{0,\mu}$ and $C_{IJK}^{12,\alpha\beta}$ are constants.
By the analogous method in the paper,
one can show the global existence of small solution $v$ to problem \eqref{QWE3}.

\end{remark}

\vskip 0.1 true cm

We now give systematic reviews on the small data solution problems of  quasilinear wave equations
with the Cauchy initial data or the initial boundary values in exterior domains. Let $v_1$ and $v_2$
solve the following $n-$dimensional quasilinear wave equations ($n\ge 2$),
respectively,
\begin{equation}\label{QWE-C}
\left\{
\begin{aligned}
&\Box v+\sum_{\alpha,\beta=0}^nQ^{\alpha\beta}(\p v)\p^2_{\alpha\beta}v=0,\hspace{1cm}  (t,x)\in [0,\infty)\times\Bbb R^n,\\
&(v,\p_tv)(0,x)=(\ve v_{1,0},\ve v_{1,1})(x), \qquad x\in\Bbb R^n
\end{aligned}
\right.
\end{equation}
and
\begin{equation}\label{QWE-B}
\left\{
\begin{aligned}
&\Box v+\sum_{\alpha,\beta=0}^nQ^{\alpha\beta}(\p v)\p^2_{\alpha\beta}v=0,\hspace{1cm} (t,x)\in[0,+\infty)\times\cK,\\
&v(t,x)=0,\hspace{4.1cm} (t,x)\in[0,+\infty)\times\p\cK,\\
&(v,\p_tv)(0,x)=(\ve v_{2,0},\ve v_{2,1})(x),\qquad x\in\cK,
\end{aligned}
\right.
\end{equation}
where $\ds Q^{\alpha\beta}(\p v)=\sum_{\gamma=0}^nQ^{\alpha\beta\gamma}\p_{\gamma}v
+\sum_{\gamma,\nu=0}^nQ^{\alpha\beta\gamma\nu}\p_{\gamma}v\p_{\nu}v$, $\ve>0$ is small, $(v_{1,0}, v_{1,1})\in C_0^{\infty}(\Bbb R^n)$ and $(v_{2,0}, v_{2,1})\in C_0^{\infty}(\cK)$.
Denote the lifespan of smooth solutions to problems \eqref{QWE-C} and \eqref{QWE-B} by $T_{1,\ve}$ and $T_{2,\ve}$, respectively.
So far there have been a lot of basic results as follows:

\vskip 0.1 true cm

${\bf\bullet}$ When $n\ge 4$, $T_{1,\ve}=\infty$ and $T_{2,\ve}=\infty$ hold.
One can see  \cite{Klainerman80}, \cite{KP83}, \cite{Li-T} or the Chapter VI of \cite{Hormander97book} for \eqref{QWE-C} and
\cite{Shib-Tsuti86}, \cite{Chen89}, \cite{Hayashi95} for \eqref{QWE-B}, respectively.

\vskip 0.1 true cm

${\bf\bullet}$ When $n=3$, the sharp bounds $T_{1,\ve}\ge e^{\f{C}{\ve}}$
(\cite{JohnKlainerman84}, \cite{Alinhac01b} and \cite{Ding-1}) and $T_{2,\ve}\ge e^{\f{C}{\ve}}$ (\cite{KSS04}) hold.
Especially, when the null condition is imposed, then
$T_{1,\ve}=\infty$ (\cite{Christodoulou86}, \cite{Klainerman}) and $T_{2,\ve}=\infty$
(\cite{Godin95}, \cite{KSS00}, \cite{MetcalfeSogge05}).

\vskip 0.1 true cm

${\bf\bullet}$ When $n=2$ and $Q^{\alpha\beta\gamma}=0$ for all $0\le \alpha,\beta,\gamma\le2$,
the sharp bounds $T_{1,\ve}\ge e^{\f{C}{\ve^2}}$
and $T_{2,\ve}\ge e^{\f{C}{\ve^2}}$ hold. Especially, when $Q^{\alpha\beta\gamma}=0$ for all $0\le \alpha,\beta,\gamma\le2$
and the null condition $\ds\sum_{\alpha,\beta,\gamma,\nu=0}^2
Q^{\alpha\beta\gamma\nu}\xi_{\alpha}\xi_{\beta}\xi_{\gamma}\xi_{\nu}\equiv 0$ for
any $(\xi_0,\xi_1,\xi_2)\in\{\pm1\}\times\SS$ are imposed, there hold  $T_{1,\ve}=\infty$ and $T_{2,\ve}=\infty$.
One can be referred to \cite{Kovalyov87}, \cite{Godin93}, \cite{Hoshiga95}, \cite{Katayama95} for
$T_{1,\ve}$ and \cite{KKL13}, \cite{Kubo13}, \cite{Kubo14}, \cite{HouYinYuan25} for $T_{2,\ve}$, respectively.

\vskip 0.1 true cm

${\bf\bullet}$ When $n=2$, $Q^{\alpha\beta\gamma}\not=0$ for some $0\le \alpha,\beta,\gamma\le2$,
it is known that the sharp bound $T_{1,\ve}\ge\f{C}{\ve^2}$ holds if the null condition is violated
(i.e., $\ds\sum_{\alpha,\beta,\gamma=0}^2Q^{\alpha\beta\gamma}\xi_{\alpha}\xi_{\beta}\xi_{\gamma}\not\equiv 0$ for
$(\xi_0,\xi_1,\xi_2)\in\{\pm1\}\times\SS$), otherwise, $T_{1,\ve}=\infty$ if the null conditions hold
(i.e., $\ds\sum_{\alpha,\beta,\gamma=0}^2Q^{\alpha\beta\gamma}\xi_{\alpha}\xi_{\beta}\xi_{\gamma}\equiv 0$
and $\ds\sum_{\alpha,\beta,\gamma,\nu=0}^2
Q^{\alpha\beta\gamma\nu}\xi_{\alpha}\xi_{\beta}\xi_{\gamma}\xi_{\nu}\equiv 0$ for any
$(\xi_0,\xi_1,\xi_2)\in\{\pm1\}\times\SS$), see \cite{Alinhac99}-\cite{Alinhac01b} and \cite{Hoshiga95}.
However, it is still open for $T_{2,\ve}=\infty$ when the null conditions hold and
for $T_{2,\ve}\ge\f{C}{\ve^2}$ when the null conditions are not fulfilled.
\vskip 0.2 true cm
From the above, the main results on the lifespan $T_{1,\ve}$ and $T_{2,\ve}$ can be described roughly as follows

\vskip 0.1 true cm

\begin{table}[!h]
\renewcommand\arraystretch{1.3}
\begin{tabular}{|c|c|c|c|}
\hline
 & Quadratic nonlinearity & Cauchy problem & Exterior domain problem\\
\hline
$n\ge4$ &  & $\ds T_{1,\ve}=\infty$ & $\ds T_{2,\ve}=\infty$ \\
\hline
\multirow{2}{*}{$n=3$} & Null condition & $\ds T_{1,\ve}=\infty$ & $\ds T_{2,\ve}=\infty$ \\
\cline{2-4}
 & Without null condition & $\ds T_{1,\ve}\ge e^{C/\ve}$ & $\ds T_{2,\ve}\ge e^{C/\ve}$ \\
\hline
\multirow{2}{*}{$n=2$} & Null condition & $\ds T_{1,\ve}=\infty$ & {\color{red}$\ds T_{2,\ve}=\infty$~Open} \\
\cline{2-4}
 & Without null condition & $\ds T_{1,\ve}\ge C/\ve^2$ & {\color{red}$\ds T_{2,\ve}\ge C/\ve^2$~Open} \\
\hline
\end{tabular}
\end{table}

\vskip 0.2 true cm
In order to solve the 2-D problem \eqref{QWE} globally, the following essential difficulties will be met and overcome.

\vskip 0.4 true cm

{\bf $\bullet$ Lack of the KSS type estimates and lower decay rates for 2-D linear wave equations}

\vskip 0.3 true cm

Note that for the 3-D linear wave equation, such KSS type estimates (see \cite[Prop 2.1]{KSS02jam} or (1.7)-(1.8)
on Page 190 of \cite{MetcalfeSogge06})
\begin{equation}\label{KSSestimate-1}
\begin{split}
&\quad(\ln(2+t))^{-1/2}\Big(\|\w{x}^{-1/2}\p v\|_{L_t^2L_x^2([0,t]\times\R^3)}+\|\w{x}^{-3/2}v\|_{L_t^2L_x^2([0,t]\times\R^3)}\Big)\\
&\ls\|\p v(0,x)\|_{L^2(\R^3)}+\int_0^t\|\Box v(s,x)\|_{L^2(\R^3)}ds
\end{split}
\end{equation}
or
\begin{equation}\label{KSSestimate-2}
\begin{split}
&\quad\|\w{x}^{-1/2-}\p v\|_{L_t^2L_x^2([0,t]\times\R^3)}+\|\w{x}^{-3/2-}v\|_{L_t^2L_x^2([0,t]\times\R^3)}\\
&\ls\|\p v(0,x)\|_{L^2(\R^3)}+\int_0^t\|\Box v(s,x)\|_{L^2(\R^3)}ds
\end{split}
\end{equation}
play key roles in establishing the long time existence of small solution $v_2$ to 3-D problem \eqref{QWE-B}.
However, the proof of the KSS estimate in \cite{KSS02jam} heavily relies on the strong Huygens principle
for 3-D space, which is not suitable for 2-D case.
On the other hand, the multiplier method used in \cite{MetcalfeSogge06} and \cite{Sterbenz05} for the initial
boundary value problem of 3-D nonlinear wave equations fails for \eqref{QWE-B} with $n=2$ (see \cite[Page 485]{Hepd-Metc23}).

On the other hand, for the solution $w$ of the 2-D linear wave equation
\begin{equation}\label{Intro-wave-compact}
\left\{
\begin{aligned}
&\Box w=g(t,x),\hspace{2.2cm} (t,x)\in[0,\infty)\times\R^2,\\
&(w,\p_tw)(0,x)=(0,0),\qquad x\in\R^2,
\end{aligned}
\right.
\end{equation}
where $\supp_x g(t,x)\subset\{x: 1\le|x|\le a\}$ and the constant $a>1$,
it follows from (4.2) with $m=0$ in \cite{Kubo14} and (57) with $\eta=0,\rho=1$  in \cite{Kubo13} that
\begin{equation}\label{Kubo13compact1}
|w|\le C\sup_{(s,y)\in[0,t]\times\R^2}\w{s}^{1/2}|g(s,y)|
\end{equation}
and
\begin{equation}\label{Kubo13compact2}
\begin{split}
&|\p w|\le \f{C\ln(2+t+|x|)}{\w{x}^{1/2}\w{t-|x|}+\w{t+|x|}^{1/2}\w{t-|x|}^{1/2}}\sum_{|b|\le1}\sup_{(s,y)\in[0,t]\times\R^2}\w{s}|\p^bg(s,y)|.
\end{split}
\end{equation}
Due to the lack of time decay of $w$ in \eqref{Kubo13compact1} and the appearance of the large factor $\ln(2+t+|x|)$
in \eqref{Kubo13compact2}, the method in \cite{Kubo13} can not be applied to global solution problem \eqref{QWE-B} with $n=2$.
This is specially emphasized (see page 320 of \cite{Kubo13})
for 2-D problem that ``Unfortunately, we are not able to handle the quadratic nonlinearity,
because we can only show that the solution itself grows at most logarithmic order".
In fact, if one uses the analogous pointwise estimate \eqref{Kubo13compact2} for the solution $v_2$
of \eqref{QWE-B},
then by the standard energy method of wave equations, the following inequality of the
higher energy $E_{high}(t)=\ds\sum_{|\al|=0}^{N_0}\int_{\Bbb R^2}|\p\p^{\al}v_2|^2dx$ ($N_0\ge 2$) for \eqref{QWE-B} can be obtained
\begin{equation*}
\frac{d}{dt}E_{high}(t)\le C\ve(1+t)^{-\zeta}E_{high}(t),\quad \zeta\in(1/2,1).
\end{equation*}
This, together with the Gronwall's Lemma,  leads to
\begin{equation}\label{Intro:energy}
E_{high}(t)\le E_{high}(0)e^{C\ve(1+t)^{1-\zeta}}.
\end{equation}
Therefore, it is hard to control the uniform largeness of $E_{high}(t)$ and the smallness of solution $v_2$
due to the exponential growth order in time in \eqref{Intro:energy}.

Fortunately, by making use of the Littlewood-Paley decompositions and some properties of the curve measure
on the unit circle $\mathbb{S}$, we can improve the spacetime decay rates in \eqref{Kubo13compact1}
-\eqref{Kubo13compact2} essentially and obtain  the following more precise pointwise estimates
(see Lemmas \ref{lem:impr:pw}-\ref{lem:impr:dpw} below) for any $\mu\in(0,1/2]$
\begin{equation}\label{HYY24compact1}
|w|
\le \f{C\ln^2(2+t+|x|)}{\w{t+|x|}^{1/2}\w{t-|x|}^{\mu}}\sup_{(s,y)\in[0,t]\times\R^2}\w{s}^{1/2+\mu}|(1-\Delta)g(s,y)|\quad
\end{equation}
and
\begin{equation}\label{HYY24compact2}
|\p w|
\le \f{C}{\w{x}^{1/2}\w{t-|x|}}\sup_{(s,y)\in[0,t]\times\R^2}\w{s}|(1-\Delta)^3g(s,y)|.
\end{equation}
This implies from \eqref{HYY24compact2} that the large factor $\ln(2+t+|x|)$
in \eqref{Kubo13compact2} has been removed through adding the orders of derivatives
for $g$.

\vskip 0.4 true cm

{\bf $\bullet$ Finding suitable divergence structures of $Q(\p u,\p^2u)$
under null condition}

\vskip 0.4 true cm

For the solution $w$ of problem \eqref{Intro-wave-compact}, it follows from \eqref{HYY24compact1} and \eqref{HYY24compact2}
that $\p w$ has a better space-time decay rate than that of $w$. In addition, when the nonlinearity
$Q(\p u, \p^2u)$ in \eqref{QWE} satisfies the null condition,
$Q(\p u,\p^2u)=
\ds\sum_{\mu=0}^2\bigl(C^{0,\mu}Q_0(\p_{\mu}u,u)+\sum_{\alpha,\beta=0}^2C^{\alpha\beta,\mu}Q_{\alpha\beta}(\p_{\mu}u,u)\bigr)$  holds in terms of \eqref{nonlinear:nullform}.
Note that by \cite[Lemma 6.6.4]{Hormander97book} or \cite[Lemma 3.3 of Chatper II]{Sogge95book}, the null forms
admit the following estimate
\begin{equation}\label{null:decay}
|Q_0(f,g)|+|Q_{\alpha,\beta}(f,g)|\le C(1+t+|x|)^{-1}\sum_{\substack{\Gamma\in\{\p,\Omega,t\p_t+r\p_r,\\x_i\p_t+t\p_i,i=1,2\}}}(|\Gamma f||\p g|+|\p f||\Gamma g|),
\end{equation}
which yields an additional decay factor $(1+t+|x|)^{-1}$ in \eqref{null:decay} than the ordinary quadratic forms.
However, the vector fields $t\p_t+r\p_r$ and $x_i\p_t+t\p_i (i=1,2)$ are hard to be used
for treating the 2-D initial boundary value problem, see \cite{KSS04}.
Therefore, \eqref{null:decay} can not be utilized directly.
Fortunately, the null form $Q_{12}$ has such a good estimate which only involves the derivatives in spacetime
and the space rotation vector field
\begin{equation}\label{Intro:Q12}
|Q_{12}(f,g)|\le C(1+|x|)^{-1}\sum_{Z\in\{\p,\Omega\}}(|Zf||\p g|+|\p f||Zg|).
\end{equation}
Furthermore, by the introduction of a new good unknown by us (see $V_a$
in \eqref{def:goodunknown} below), the $Q_0$ null form can be eliminated
and spacial localized,

Based on these important facts mentioned above, in order to solve the global solution problem \eqref{QWE}, a natural question arises:
Whether is it possible to write the null form
$Q_{0i}(f,g)$ as an efficient linear combinations of $Q_0(f,g)$, $Q_{12}(f,g)$, some terms with divergence form
and higher order error terms?

Indeed, we can find the following interesting algebraic equalities for $x\neq0$ and $i=1,2$, $\alpha=0,1,2$:
\begin{equation}\label{Q12}
\begin{split}
Q_{12}(f,g)&=\underbrace{\sum_{i=1}^2\p_i(\frac{x_if\Omega g}{|x|^2})-\p_\theta(\frac{f\p_rg}{|x|})}_{\bf divergence~term},\\
Q_{0i}(\p_{\alpha}f,g)&=\underbrace{C^{12}_{i\alpha}Q_{12}(\p f,g)}_{\bf divergence~term}+C(\omega)Q_0(\p f,g)
+\underbrace{C(\omega)(\Box f)\p g}_{\bf cubic~null~form~term}\\
&\quad+\underbrace{C(\omega)\frac{(\Omega\p f)\p g+(\Omega g)\p^2 f}{|x|}}_{\text{\bf quadratic~error~term~with~decay~factor~$|x|^{-1}$}},
\end{split}
\end{equation}
where $\p_\theta=\Omega=x_1\p_2-x_2\p_1$ and $\p_r=\frac{x_1\p_1+x_2\p_2}{r}$
with $r=|x|$, $C^{12}_{i\alpha}$ are constants, $C(\omega)$ is the generic smooth polynomial of $\omega=(\frac{x_1}{r},\frac{x_2}{r})$.
Moreover, substituting the form of $Q_{12}(f,g)$ into $Q_{0i}(\p_{\alpha}f,g)$
and taking a delicate computation yield such a better structure for $x\neq0$,
\begin{equation}\label{Q12-1}
\begin{split}
Q_{0i}(\p_{\alpha}f,g)&=C(\omega)Q_0(\p f,g)+\underbrace{C(\omega)(\Box f)\p g}_{\bf cubic~null~form~term}\\
&\quad +\underbrace{\frac{C(\omega)g\Omega\p f}{|x|^2}}_{\text{\bf quadratic~error~term~with~better~decay~factor~$|x|^{-2}$}}\\
&\quad +\sum_{\mu=0}^2\underbrace{\p_{\mu}(\frac{C(\omega)g\Omega\p f}{|x|})
+\p_\theta(\frac{C(\omega)g\p^2f}{|x|})}_{\bf divergence~term}.
\end{split}
\end{equation}

Based on \eqref{Intro:Q12}, \eqref{Q12} and \eqref{Q12-1}, by involved analysis,
some required estimates for proving Theorem \ref{thm1} can be derived
in Section \ref{sect4}.

\vskip 0.4 true cm

{\bf $\bullet$ Weak decay of the local energy in 2-D case}

\vskip 0.3 true cm

Note that some local energy decay estimates for the initial boundary value problem of
linear wave equations have early been given in \cite{Morawetz75,Vainberg75} and the references therein.
It is shown that for the exterior domain problem of 3-D linear wave equation, the local energy of the solution
admits the exponential decay in time; for the 2-D case, the time decay rate of the local energy is only $(1+t)^{-1}$
while it is $(1+t)^{-1}\ln^{-2}(2+t)$ for the Dirichlet boundary value problem.
From this, near the boundary in 3-D case, the factor $t$ in the scaling vector field $t\p_t+r\p_r$
can be absorbed by the exponential decay of the local energy (see more details in the proof of Lemma 2.8 in \cite{MetcalfeSogge05});
while in 2-D case, the factor $t$ in $t\p_t+r\p_r$ is difficult to be treated.
This leads to that the crucial scaling vector field $t\p_t+r\p_r$ will be removed in the proof of Theorem \ref{thm1} by us.
For compensating this kind of loss, analogously to \eqref{thm1:decay:a}-\eqref{thm1:decay:c},
some new pointwise estimates  will be established by utilizing the local energy decay estimates
and deriving the related estimates near the boundary (see Lemma \ref{lem:ibvp:loc} below).
To this end, the local energy decay estimate \eqref{thm1:decay:LE} for the solution $u$ itself and its derivatives in time
is firstly obtained, then by the induction on the orders of the derivatives in space and by the elliptic estimates for the operator $\Delta=\p_t^2-\Box$, the estimates \eqref{thm1:decay:LE} will be obtained for all the required derivatives in space and time.

\vskip 0.1 true cm

Theorem \ref{thm1} will be proved by the continuous method.
At first, we derive the energy estimates for problem \eqref{QWE} (see \eqref{energy:B} and \eqref{loc:impr:A} below)
\begin{equation}\label{intro:energy}
\begin{split}
&\sum_{|a|\le2N-10}\sum_{Z\in\{\p,\Omega\}}\|\p Z^au\|_{L^2(\cK)}\le C\ve(1+t)^{0+},\\
&\sum_{|a|\le2N-17}\|\p^au\|_{L^2(\cK_R)}\le C\ve(1+t)^{-1+}.
\end{split}
\end{equation}
Together with  the Klainerman-Sobolev inequality, it is obtained from the first inequality of \eqref{intro:energy} that
\begin{equation}\label{intro:L2Linf}
\sum_{|a|\le2N-12}|\p Z^au|\le C\ve\w{x}^{-1/2}(1+t)^{0+}.
\end{equation}
In terms of \eqref{intro:L2Linf},
we start to prove the estimates \eqref{thm1:decay:a}-\eqref{thm1:decay:c}.
As in \cite{Kubo13} and \cite{Kubo14},
the weighted $L^\infty-L^\infty$ estimates for the 2-D linear wave equation
will be applied.
It is emphasized that the space-time pointwise decay factor $\w{t-|x|}^{-1}$ of \eqref{thm1:decay:a}
will play a key role in the establishment of the related energy estimates.
Meanwhile, the precise spacetime pointwise estimate \eqref{thm1:decay:c} with the additional decay
factor $\w{t-|x|}^{0.001-1/2}$ is also  crucial in the proof of Theorem \ref{thm1} (see the
detailed explanations in Remarks \ref{rmk4-1} and \ref{rmk5-1}).

The process of the pointwise estimates on the solution $u$ of problem \eqref{QWE}
can be described in turn (see Section \ref{sect6} below)
\begin{equation*}
\begin{split}
\sum_{|a|\le2N-18}|Z^au| &\le C\ve\w{t+|x|}^{0+},\\
\sum_{|a|\le2N-20}|\bar\p Z^au| &\le C\ve\w{x}^{-1+}\quad\text{for $|x|\ge1+t/2$}, \\
\end{split}
\end{equation*}

\begin{equation*}
\begin{split}
\sum_{|a|\le2N-24}|Z^au| &\le C\ve\w{t+|x|}^{-1/2+},\\
\sum_{|a|\le2N-26}|\bar\p Z^au| &\le C\ve\w{x}^{-3/2+}\quad\text{for $|x|\ge1+t/2$},\\
\sum_{|a|\le2N-24}|\p Z^au| &\le C\ve\w{x}^{-1/2}\w{t-|x|}^{-1/2}\w{t}^{0+},\\
\sum_{|a|\le2N-37}|\p Z^au| &\le C\ve\w{x}^{-1/2}\w{t-|x|}^{-1}\w{t}^{0+},\\
\sum_{|a|\le2N-41}\|\p^au\|_{L^2(\cK_R)} &\le C\ve(1+t)^{-1},\\
\sum_{|a|\le2N-43}|Z^au| &\le C\ve\w{t+|x|}^{-1/2+}\w{t-|x|}^{-0.4},\\
\sum_{|a|\le2N-53}|\p Z^au| &\le C\ve\w{x}^{-1/2}\w{t-|x|}^{-1},\\
\sum_{|a|\le2N-59}|Z^au| &\le C\ve\w{t+|x|}^{-1/2+}\w{t-|x|}^{-1/2+}.
\end{split}
\end{equation*}
Note that in the previous process, the decay rates of the pointwise estimates
of $Z^au,\p Z^au$ are gradually improved when the number $|a|$ of the vector fields decreases.
Based on this, the estimates \eqref{thm1:decay:a}-\eqref{thm1:decay:c} can be obtained and
further the proof of Theorem \ref{thm1} is completed for $N\ge 59$.

\vskip 0.2 true cm

The paper is organized as follows.
In Section \ref{sect2}, for the quadratic quasilinear wave equations, the related null condition structures
and the divergence structures are studied, meanwhile, a new good unknown $V_a$
(see \eqref{def:goodunknown}) is introduced.
Several basic lemmas and
some important results on the pointwise estimates
of solutions to the 2-D linear wave equations are given in Section \ref{sect3}.
In Section \ref{sect-app-B}, the improved pointwise estimates in Section \ref{sect3} are proved.
In Section \ref{sect4}, some pointwise estimates of the initial boundary value problem
with the divergence forms are derived.
In Section \ref{sect5}, the required energy estimates are achieved by the ghost weight technique
in \cite{Alinhac01a} and the elliptic estimates.
In Section \ref{sect6}, the crucial pointwise estimates are established step by step and then Theorem \ref{thm1} is shown.

\vskip 1 true cm

\noindent \textbf{Notations:}
\begin{itemize}
  \item $\cK:=\R^2\setminus\cO$, $\cK_R:=\cK\cap\{x: |x|\le R\}$, $R>1$ is a fixed constant which may be changed in different places.
  \item For technical reasons and without loss of generality, one can assume $\p\cK\subset\{x:c_0<|x|<1/2\}$, where $c_0$ is a positive constant.
  \item The cutoff function $\chi_{[a,b]}(s)\in C^\infty(\R)$ with $a,b\in\R$, $0<a<b$, $0\le\chi_{[a,b]}(s)\le1$ and
  \begin{equation*}
    \chi_{[a,b]}(s)=\left\{
    \begin{aligned}
    0,\qquad s\le a,\\
    1,\qquad s\ge b.
    \end{aligned}
    \right.
  \end{equation*}
  \item $\w{x}:=\sqrt{1+|x|^2}$.
  \item $\N_0:=\{0,1,2,\cdots\}$ and $\N:=\{1,2,\cdots\}$.
  \item $\p_0:=\p_t$, $\p_1:=\p_{x_1}$, $\p_2:=\p_{x_2}$, $\p_x:=\nabla_x=(\p_1,\p_2)$, $\p:=(\p_t,\p_1,\p_2)$.
  \item For $|x|>0$, define $\bar\p_i:=\p_i+\frac{x_i}{|x|}\p_t$ ($i=1,2$) and $\bar\p=(\bar\p_1,\bar\p_2)$.
  \item $\Omega_{ij}:=x_i\p_j-x_j\p_i$ with $i,j=1,2$, $\Omega:=\Omega_{12}$, $Z=\{Z_1,Z_2,Z_3,Z_4\}:=\{\p_t,\p_1,\p_2,\Omega\}$,
  $\tilde Z=\{\tilde Z_1,\tilde Z_2,\tilde Z_3,\tilde Z_4\}$ with $\tilde Z_1:=\p_t$ and $\tilde Z_i:=\chi_{[1/2,1]}(x)Z_i$ for $i=2,3,4$.
  \item The polar coordinates: $r=\sqrt{x_1^2+x_2^2}$, $\theta=\arctan\frac{x_2}{x_1}$, $\p_\theta=\Omega$.
  \item $\omega=(\omega_1,\omega_2)=(\frac{x_1}{r},\frac{x_2}{r})$.
  \item $\p_x^a:=\p_1^{a_1}\p_2^{a_2}$ for $a\in\N_0^2$, $\p^a:=\p_t^{a_1}\p_1^{a_2}\p_2^{a_3}$ for $a\in\N_0^3$
      and $Z^a:=Z_1^{a_1}Z_2^{a_2}Z_2^{a_3}Z_4^{a_4}$ for $a\in\N_0^4$.
  \item The commutator $[X,Y]:=XY-YX$.
  \item For $f,g\ge0$, $f\ls g$ or $g\gt f$ denotes $f\le Cg$ for a generic constant $C>0$ independent of $\ve$;
    $f\approx g$ means $f\ls g$ and $g\ls f$.
  \item Denote $\|u(t)\|=\|u(t,\cdot)\|$ with norm $\|\cdot\|=\|\cdot\|_{L^2(\cK)},\|\cdot\|_{L^2(\cK_R)},\|\cdot\|_{H^k(\cK)}$.
  \item $|\bar\p f|:=(|\bar\p_1f|^2+|\bar\p_2f|^2)^{1/2}$.
  \item $\cW_{\mu,\nu}(t,x)=\w{t+|x|}^\mu(\min\{\w{x},\w{t-|x|}\})^\nu$ for $\mu, \nu\in\Bbb R$.
  \item $\ds|Z^{\le j}f|:=\Big(\sum_{0\le|a|\le j}|Z^af|^2\Big)^\frac12$ and $Z^{\le1}f=(f,Zf)$.
\end{itemize}

\vskip 0.1 true cm

\section{Null condition, divergence structure and a new good unknown}\label{sect2}

\begin{lemma}\label{lem:eqn:high}
Let $u$ be the smooth solution of \eqref{QWE} with \eqref{nonlinear} and \eqref{null:condition}.
Then for any multi-index $a$, $Z^au$ satisfies
\begin{equation}\label{eqn:high}
\Box Z^au=\sum_{b+c\le a}\sum_{\alpha,\beta,\gamma=0}^2Q_{abc}^{\alpha\beta\gamma}\p^2_{\alpha\beta}Z^bu\p_{\gamma}Z^cu,
\end{equation}
where $Q_{abc}^{\alpha\beta\gamma}$ are constants, in particular $Q_{aa0}^{\alpha\beta\gamma}=Q^{\alpha\beta\gamma}$ holds.
Furthermore, for any $(\xi_0,\xi_1,\xi_2)\in\{\pm1\}\times\SS$, one has
\begin{equation}\label{null:high}
\sum_{\alpha,\beta,\gamma=0}^2Q_{abc}^{\alpha\beta\gamma}\xi_\alpha\xi_\beta\xi_\gamma\equiv0.
\end{equation}
In addition, there are constants $C_{abc}^{0,\alpha}$ and $C_{abc}^{\mu\nu,\alpha}$ such that
\begin{equation}\label{eqn:high'}
\Box Z^au=\sum_{b+c\le a}\sum_{\alpha=0}^2\Big\{C_{abc}^{0,\alpha}Q_0(\p_{\alpha}Z^bu,Z^cu)
+\sum_{0\le\mu<\nu\le2}C_{abc}^{\mu\nu,\alpha}Q_{\mu\nu}(\p_{\alpha}Z^bu,Z^cu)\Big\}.
\end{equation}

\end{lemma}
\begin{proof}
See Lemma 6.6.5 of \cite{Hormander97book} and Remark \ref{rmk1-1}.
\end{proof}

\begin{lemma}\label{lem:null:structure}
Suppose that the constants $N^{\alpha\beta\mu}$ satisfy
\begin{equation*}
\sum_{\alpha,\beta,\mu=0}^2N^{\alpha\beta\mu}\xi_\alpha\xi_\beta\xi_\mu\equiv0
\quad\text{for any $\xi=(\xi_0,\xi_1,\xi_2)\in\{\pm1\}\times\SS$}.
\end{equation*}
Then for any smooth functions $f,g,h$, it holds that
\begin{equation}\label{null:structure}
\begin{split}
\Big|\sum_{\alpha,\beta,\mu=0}^2N^{\alpha\beta\mu}
\p^2_{\alpha\beta}f\p_{\mu}g\Big|&\ls|\bar\p\p f||\p g|+|\p^2f||\bar\p g|,\\
\Big|\sum_{\alpha,\beta,\mu=0}^2N^{\alpha\beta\mu}
\p_{\alpha}f\p_{\beta}g\p_{\mu}h\Big|&\ls|\bar\p f||\p g||\p h|+|\p f||\bar\p g||\p h|+|\p f||\p g||\bar\p h|.
\end{split}
\end{equation}
In addition, one has
\begin{equation}\label{Q12:null:structure}
\begin{split}
Q_{12}(f,g)&=\frac{1}{r}(\p_rf\p_\theta g-\p_\theta f\p_rg)=\sum_{i=1,2}\p_i(\frac{x_if\Omega g}{|x|^2})
-\p_\theta(\frac{f\p_rg}{|x|}),\quad x\neq0,\\
|Q_{12}(f,g)|&\ls(1+|x|)^{-1}(|Zf||\p g|+|\p f||Zg|).
\end{split}
\end{equation}
\end{lemma}
\begin{proof}
Since the proof of \eqref{null:structure} is rather analogous to that in Section 9.1 of \cite{Alinhac:book} or \cite[Lemma 2.2]{HouYin20jde}, 
we omit it here.
In the polar coordinates $(x_1,x_2)=(r\cos\theta,r\sin\theta)$, \eqref{Q12:null:structure} can be directly verified
by the fact of $\p_{\theta}=x_1\p_2-x_2\p_1$.
\end{proof}

Next we give out the suitable structure of $Q_{0i}$ null forms, which will play an important role in the proof of Theorem \ref{thm1}.
\begin{lemma}[Structures of $Q_{0i}$ null forms]\label{lem:Q0i:structure}
For any functions $f,g$, $i=1,2$ and $\alpha=0,1,2$, one has that for $x\neq0$,
\begin{equation}\label{Q0i:structure1}
\begin{split}
Q_{0i}(\p_{\alpha}f,g)&=\sum_{\mu=0}^2\Big\{C_{i\alpha}^{12,\mu}Q_{12}(\p_{\mu}f,g)
+C_{i\alpha}^{0,\mu}(\omega)Q_0(\p_{\mu}f,g)+C_{i\alpha}^{\mu}(\omega)\Box f\p_{\mu}g\Big\}\\
&\quad+\sum_{\mu,\nu=0}^2\frac{C_{i\alpha}^{\mu\nu,1}(\omega)\Omega\p_{\mu}f\p_{\nu}g
+C_{i\alpha}^{\mu\nu,2}(\omega)\Omega g\p^2_{\mu\nu}f}{|x|},
\end{split}
\end{equation}
where $C_{i\alpha}^{12,\mu}$ are constants, $C_{i\alpha}^{0,\mu}(\omega)$,
$C_{i\alpha}^{\mu}(\omega)$, $C_{i\alpha}^{\mu\nu,1}(\omega)$ and $C_{i\alpha}^{\mu\nu,2}(\omega)$
are polynomials of $\omega$.
Furthermore, it holds that
\begin{equation}\label{Q0i:structure2}
\begin{split}
Q_{0i}(\p_{\alpha}f,g)&=\sum_{\mu=0}^2\Big\{C_{i\alpha}^{12,\mu}Q_{12}(\p_{\mu}f,g)
+C_{i\alpha}^{0,\mu}(\omega)Q_0(\p_{\mu}f,g)+C_{i\alpha}^{\mu}(\omega)\Box f\p_{\mu}g\Big\}\\
&\quad+\sum_{\mu=0}^2\p_{\mu}(\frac{C(\omega)g\Omega\p f}{|x|})+\p_\theta(\frac{C(\omega)g\p^2f}{|x|})
+\frac{C(\omega)g\Omega\p f}{|x|^2},
\end{split}
\end{equation}
where $C(\omega)$ stands for the generic polynomial of $\omega$.
\end{lemma}

\begin{proof}
At first, due to
\begin{equation}\label{Q0i:pf1}
\begin{split}
Q_{0i}(\p_tf,g)&=\p^2_tf\p_ig-\p^2_{0i}f\p_tg=\p^2_tf\p_ig-Q_0(\p_if,g)-\sum_{j=1,2}\p^2_{ij}f\p_jg\\
&=\p^2_tf\p_ig-Q_0(\p_if,g)-\sum_{j=1,2}Q_{ij}(\p_jf,g)-\Delta f\p_ig\\
&=\Box f\p_ig-Q_0(\p_if,g)-\sum_{j=1,2}Q_{ij}(\p_jf,g),
\end{split}
\end{equation}
then \eqref{Q0i:structure1} holds for $\alpha=0$.

Next we calculate $Q_{0i}(\p_jf,g)$ with $j=1,2$. Note that
\begin{equation}\label{Q0i:pf2}
\begin{split}
\p^2_{ij}f&=\sum_{k=1,2}\omega^2_k\p^2_{ij}f=\sum_{k=1,2}\frac{\omega_k\Omega_{ki}\p_jf+\omega_ix_k\p^2_{jk}f}{|x|}\\
&=\sum_{k=1,2}\frac{\omega_k\Omega_{ki}\p_jf+\omega_i\Omega_{kj}\p_kf}{|x|}+\omega_i\omega_j\Delta f.
\end{split}
\end{equation}
Then it follows from \eqref{Q0i:pf2} and direct computations that
\begin{equation}\label{Q0i:pf3}
\begin{split}
&Q_{0i}(\p_jf,g)=\p^2_{0j}f\p_ig-\p^2_{ij}f\p_tg\\
&=\p^2_{0j}f\p_ig-\omega_i\omega_j\Delta f\p_tg-\sum_{k=1,2}\p_tg\frac{\omega_k\Omega_{ki}\p_jf+\omega_i\Omega_{kj}\p_kf}{|x|}\\
&=\p^2_{0j}f\p_ig+\omega_i\omega_j\Box f\p_tg-\omega_i\omega_j\p^2_tf\p_tg-\sum_{k=1,2}\p_tg\frac{\omega_k\Omega_{ki}\p_jf+\omega_i\Omega_{kj}\p_kf}{|x|}\\
&=\omega_i\omega_j\Box f\p_tg-\omega_i\omega_jQ_0(\p_tf,g)-\sum_{k=1,2}\p_tg\frac{\omega_k\Omega_{ki}\p_jf+\omega_i\Omega_{kj}\p_kf}{|x|}\\
&\qquad+\p^2_{0j}f\p_ig-\sum_{k=1,2}\omega_i\omega_j\p^2_{0k}f\p_kg.
\end{split}
\end{equation}
The last line of \eqref{Q0i:pf3} can be calculated as follows
\begin{equation}\label{Q0i:pf4}
\begin{split}
\p^2_{0j}f\p_ig-\sum_{k=1,2}\omega_i\omega_j\p^2_{0k}f\p_kg
&=\sum_{k=1,2}(\omega_k^2\p^2_{0j}f\p_ig-\omega_i\omega_j\p^2_{0k}f\p_kg)\\
&=\sum_{k=1,2}\frac{\omega_k\p^2_{0j}f\Omega_{ki}g+\omega_i\Omega_{kj}\p_tf\p_kg}{|x|}.
\end{split}
\end{equation}
Substituting \eqref{Q0i:pf4} into \eqref{Q0i:pf3} derives
\begin{equation}\label{Q0i:pf5}
\begin{split}
Q_{0i}(\p_jf,g)&=\omega_i\omega_j\Box f\p_tg-\omega_i\omega_jQ_0(\p_tf,g) -\sum_{k=1,2}\p_tg\frac{\omega_k\Omega_{ki}\p_jf+\omega_i\Omega_{kj}\p_kf}{|x|}\\
&\quad +\sum_{k=1,2}\frac{\omega_k\p^2_{0j}f\Omega_{ki}g+\omega_i\Omega_{kj}\p_tf\p_kg}{|x|}.
\end{split}
\end{equation}
This, together with \eqref{Q0i:pf1}, yields the proof of \eqref{Q0i:structure1}.

At last, we rewrite the last two summations of \eqref{Q0i:pf5} into the divergence forms with null structures.
Due to
\begin{equation*}
\begin{split}
\sum_{k=1,2}[\omega_k,\Omega_{ki}]F&=\sum_{k=1,2}(-x_k\delta_{ik}+x_i)\frac{F}{|x|}=\omega_iF,\\
\sum_{k=1,2}[\Omega_{kj},\p_k]F&=\sum_{k=1,2}(-\p_j+\delta_{jk}\p_k)F=-\p_jF,
\end{split}
\end{equation*}
then one has
\begin{equation}\label{Q0i:pf6}
\begin{split}
&-\sum_{k=1,2}\p_tg\frac{\omega_k\Omega_{ki}\p_jf+\omega_i\Omega_{kj}\p_kf}{|x|}
+\sum_{k=1,2}\frac{\omega_k\p^2_{0j}f\Omega_{ki}g+\omega_i\Omega_{kj}\p_tf\p_kg}{|x|}\\
&=-\sum_{k=1,2}\p_t(g\frac{\omega_k\Omega_{ki}\p_jf+\omega_i\Omega_{kj}\p_kf}{|x|})
+\sum_{k=1,2}\frac{\omega_k}{|x|}(\p^2_{0j}f\Omega_{ki}g+\Omega_{ki}\p^2_{0j}fg)\\
&\quad+\sum_{k=1,2}\frac{\omega_i}{|x|}(\Omega_{kj}\p_tf\p_kg+\Omega_{kj}\p^2_{0k}fg)\\
&=-\sum_{k=1,2}\p_t(g\frac{\omega_k\Omega_{ki}\p_jf+\omega_i\Omega_{kj}\p_kf}{|x|})
+\sum_{k=1,2}\Omega_{ki}(\frac{\omega_k\p^2_{0j}fg}{|x|})+\frac{\omega_i\p^2_{0j}fg}{|x|}\\
&\quad+\sum_{k=1,2}\p_k(\frac{\omega_i\Omega_{kj}\p_tfg}{|x|})-\frac{\omega_i\p^2_{0j}fg}{|x|}
-\sum_{k=1,2}\p_k(\frac{\omega_i}{|x|})\Omega_{kj}\p_tfg.
\end{split}
\end{equation}
Therefore, by  \eqref{Q0i:pf5} and \eqref{Q0i:pf6},  we complete the proof of Lemma \ref{lem:Q0i:structure}.
\end{proof}

\begin{lemma}
Let $u$ be the smooth solution of \eqref{QWE} with \eqref{nonlinear} and \eqref{null:condition}.
For any multi-index $a$, one has
\begin{equation}\label{eqn:reduce}
\begin{split}
&\Box Z^au=\sum_{b+c\le a}\Big\{\sum_{\mu,\nu=0}^2|x|^{-1}(C(\omega)\p^2_{\mu\nu}Z^bu\Omega Z^cu
+C(\omega)\Omega\p_{\mu}Z^bu\p_{\nu}Z^cu)\\
&+\sum_{\mu=0}^2\Big[C_{abc}^{12,\mu}Q_{12}(\p_{\mu}Z^bu,Z^cu)
+C_{abc}^{0,\mu}(\omega)Q_0(\p_{\mu}Z^bu,Z^cu)+C(\omega)\Box Z^bu\p_{\mu}Z^cu\Big]\Big\},
\end{split}
\end{equation}
where $C_{abc}^{12,\mu}$ are constants, and $C_{abc}^{0,\mu}(\omega)$ are polynomials of $\omega$.
Introduce the good unknown
\begin{equation}\label{def:goodunknown}
V_a:=\tilde Z^au-\frac{1}{2}\sum_{b+c\le a}\sum_{\mu=0}^2C_{abc}^{0,\mu}(\omega)\chi_{[1/2,1]}(x)\p_{\mu}Z^bu Z^cu.
\end{equation}
Then we have $\ds V_a|_{\p\cK}=0$ and \eqref{eqn:reduce} is reduced to
\begin{equation}\label{eqn:good}
\begin{split}
\Box &V_a=\Box(\tilde Z^a-Z^a)u+\frac{1}{2}\sum_{b+c\le a}\sum_{\mu=0}^2
[\Delta,C_{abc}^{0,\mu}(\omega)\chi_{[1/2,1]}(x)](\p_{\mu}Z^buZ^cu)\\
&+\sum_{b+c\le a}\sum_{\mu=0}^2\Big\{C_{abc}^{12,\mu}Q_{12}(\p_{\mu}Z^bu,Z^cu)
+C(\omega)(1-\chi_{[1/2,1]}(x))Q_0(\p_{\mu}Z^bu,Z^cu)\\
&+C(\omega)\Box Z^bu\p_{\mu}Z^cu
+C(\omega)\chi_{[1/2,1]}(x)(\Box\p_{\mu}Z^buZ^cu+\p_{\mu}Z^bu\Box Z^cu)\Big\}\\
&+\sum_{b+c\le a}\sum_{\mu,\nu=0}^2|x|^{-1}(C(\omega)\p^2_{\mu\nu}Z^bu\Omega Z^cu
+C(\omega)\Omega\p_{\mu}Z^bu\p_{\nu}Z^cu),
\end{split}
\end{equation}
where the commutator on the first line of \eqref{eqn:good} admits the following estimate
\begin{equation}\label{comm:estimate}
|[\Delta,C_{abc}^{0,\mu}(\omega)\chi_{[1/2,1]}(x)](\p_{\mu}Z^buZ^cu)|
\ls|x|^{-2}|Z^{\le1}(\p_{\mu}Z^buZ^cu)|.
\end{equation}
On the other hand, we have a better form of \eqref{eqn:good} with some divergence terms
\begin{equation*}\label{eqn:good:div}
\begin{split}
&\Box V_a=\underbrace{\Box(\tilde Z^a-Z^a)u}_{\bf spacial~localization~term}+\frac{1}{2}\sum_{b+c\le a}\sum_{\mu=0}^2
\underbrace{[\Delta,C_{abc}^{0,\mu}(\omega)\chi_{[1/2,1]}(x)](\p_{\mu}Z^buZ^cu)}_{\text{\bf quadratic~error~term~with~better~decay~factor~$|x|^{-2}$}}\\
\end{split}
\end{equation*}

\begin{equation}\label{eqn:good:div}
\begin{split}
&\quad +\sum_{b+c\le a}\sum_{\mu=0}^2\Big\{\underbrace{C(\omega)(1-\chi_{[1/2,1]}(x))Q_0(\p_{\mu}Z^bu,Z^cu)}_{\bf spacial~localization~term}
+\underbrace{C(\omega)\Box Z^bu\p_{\mu}Z^cu}_{\bf cubic~null~form~term}\\
&\qquad+\underbrace{C(\omega)\chi_{[1/2,1]}(x)(\Box\p_{\mu}Z^buZ^cu+\p_{\mu}Z^bu\Box Z^cu)}_{\bf cubic~null~form~term}\Big\}\\
&\quad +\sum_{b+c\le a}\Big\{\underbrace{\p_\theta(\frac{C(\omega)\p^2Z^bu Z^cu}{|x|})
+\sum_{\mu=0}^2\p_{\mu}(\frac{C(\omega)Z\p Z^bu Z^cu}{|x|})}_{\bf divergence~term}\\
&\quad +\underbrace{\frac{C(\omega)Z\p Z^bu Z^cu}{|x|^2}}_{\text{\bf quadratic~error~term~with~better~decay~factor~$|x|^{-2}$}}\Big\}.
\end{split}
\end{equation}
\end{lemma}
\begin{remark}\label{YHCS-1}
The introduction on the good unknown $V_a$ in \eqref{def:goodunknown} is
such that the resulting $Q_0$ null form nonlinearity can be eliminated and spacial localized.
\end{remark}

\begin{proof}
Note that \eqref{eqn:reduce} comes from \eqref{eqn:high'} and \eqref{Q0i:structure1} directly.

In addition, according to the definition \eqref{def:goodunknown}, one has
\begin{equation*}
\begin{split}
&\Box V_a=\Box(\tilde Z^a-Z^a)u+\Box Z^au+\frac{1}{2}\sum_{b+c\le a}\sum_{\mu=0}^2\Big\{[\Delta,C_{abc}^{0,\mu}(\omega)\chi_{[1/2,1]}(x)](\p_{\mu}Z^buZ^cu)\\
&\qquad\qquad-C_{abc}^{0,\mu}(\omega)\chi_{[1/2,1]}(x)\Box(\p_{\mu}Z^buZ^cu)\Big\}.
\end{split}
\end{equation*}
This, together with the equality $\Box(fg)=g\Box f+f\Box g+2Q_0(f,g)$ and \eqref{eqn:reduce}, yields \eqref{eqn:good}.

The estimate of the commutator \eqref{comm:estimate} comes from the expressions  $\Delta=\p_r^2+r^{-1}\p_r+r^{-2}\Omega^2$, $\omega=(\cos\theta,\sin\theta)$ and $\p_r\omega=0$ in the polar coordinates together with direct computation.

The proof of \eqref{eqn:good:div} is analogous to that of \eqref{eqn:good} with \eqref{Q0i:structure2} instead of \eqref{Q0i:structure1}.
\end{proof}

\section{Preliminaries}\label{sect3}

\subsection{Some useful lemmas}

In this subsection, we list some useful conclusions  including Gronwall's Lemma, the Sobolev embedding, the elliptic estimate
and  the local energy decay estimate.

\begin{lemma}\label{lem:Gronwall}
For the positive constants $A,B,C,D$ with $B<D$ and function $f(t)\ge0$, if
\begin{equation*}
f(t)\le A+B\int_0^t\frac{f(s)ds}{1+s}+C(1+t)^D,
\end{equation*}
then it holds
\begin{equation*}
f(t)\le A(1+t)^B+C(1+t)^D+\frac{BC}{D-B}(1+t)^D.
\end{equation*}
\end{lemma}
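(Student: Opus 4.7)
The plan is a standard integrating-factor argument applied to the natural majorant of $f$. First I would set
$$g(t) := A + B\int_0^t\frac{f(s)\,ds}{1+s} + C(1+t)^D,$$
so that $f(t)\le g(t)$ by hypothesis and $g(0)=A+C$. Differentiating and using $f\le g$ converts the integral inequality into the pointwise linear differential inequality
$$g'(t) \le \frac{B}{1+t}\,g(t) + CD(1+t)^{D-1}.$$

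Next I would multiply through by the integrating factor $(1+t)^{-B}$, which rewrites the inequality as
$$\frac{d}{dt}\bigl[(1+t)^{-B}g(t)\bigr] \le CD(1+t)^{D-B-1}.$$
Since $D-B>0$, integrating from $0$ to $t$ is routine and yields, after multiplying back by $(1+t)^B$,
$$g(t) \le (A+C)(1+t)^B + \frac{CD}{D-B}\bigl[(1+t)^D - (1+t)^B\bigr].$$

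Finally I would use the algebraic identity $\frac{CD}{D-B}=C+\frac{BC}{D-B}$ to regroup the right hand side as
$$A(1+t)^B - \frac{BC}{D-B}(1+t)^B + C(1+t)^D + \frac{BC}{D-B}(1+t)^D,$$
and simply discard the nonpositive term $-\frac{BC}{D-B}(1+t)^B$ (all constants being positive and $B<D$). The resulting bound is exactly the claimed inequality, and since $f\le g$ the conclusion follows. There is no real obstacle here: the only point to watch is ensuring $B<D$ so that the integrating-factor integral $\int_0^t(1+s)^{D-B-1}ds$ is evaluated in closed form without logarithmic corrections, which is precisely the hypothesis of the lemma.
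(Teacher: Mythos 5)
Your proof is correct. The paper states this lemma without providing any proof, so there is nothing to compare against; your integrating-factor argument (majorize $f$ by $g$, differentiate, multiply by $(1+t)^{-B}$, integrate using $D-B>0$, and discard the nonpositive term $-\frac{BC}{D-B}(1+t)^B$) is the standard derivation and reproduces the stated bound exactly.
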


\begin{lemma}[Lemma 3.6 of \cite{Kubo14}]
For any $f(x)\in C_0^2(\overline{\cK})$, one has that for $x\in\cK$,
\begin{equation}\label{Sobo:ineq}
\w{x}^{1/2}|f(x)|\ls\sum_{|a|\le2}\|Z^af\|_{L^2(\cK)}.
\end{equation}
\end{lemma}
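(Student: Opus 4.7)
The strategy is a standard Klainerman--Sobolev argument in polar coordinates, adapted to two dimensions; the extra $\w{x}^{1/2}$ factor (rather than the $\w{x}$ one gets in three dimensions) reflects that one only gains a single $r^{1/2}$ from the circle.

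First I would reduce to the case $r_0 := |x_0| \ge 1$. For $r_0 \le 1$, the estimate $\w{x_0}^{1/2}|f(x_0)| \ls \sum_{|a|\le 2}\|Z^af\|_{L^2(\cK)}$ follows from the standard two-dimensional Sobolev embedding $H^2(B) \hookrightarrow L^\infty(B)$ applied on a unit ball $B \subset \cK$ containing $x_0$ (here one uses that $\cK$ is the complement of a compact obstacle, so near any point of $\cK$ one has room for such a ball; alternatively an even cruder extension-by-reflection argument works since $|a|\le 2$ already controls $\p_x^a f$).

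For $r_0 \ge 1$, write $x = (r\cos\theta, r\sin\theta)$ and note $\Omega = \p_\theta$ in these coordinates, so that circular derivatives translate directly into $\Omega$. The plan has two ingredients. \emph{Ingredient A (one-dimensional Sobolev on the circle):} for any smooth $g$ on $\SS^1$,
\[
|g(\theta_0)|^2 \ls \int_0^{2\pi}\bigl(|g(\theta)|^2+|\p_\theta g(\theta)|^2\bigr)\,d\theta,
\]
which applied at fixed radius $r=r_0$ gives
\[
|f(r_0,\theta_0)|^2 \ls \int_0^{2\pi}\bigl(|f(r_0,\theta)|^2+|\Omega f(r_0,\theta)|^2\bigr)\,d\theta.
\]
\emph{Ingredient B (a weighted radial trace estimate):} for any smooth compactly supported $g$ on $\cK$ and each $\theta$, I would integrate $-\p_r(r|g|^2) = -|g|^2 - 2rg\p_r g$ from $r_0$ to $\infty$ to obtain
\[
r_0|g(r_0,\theta)|^2 \le \int_{r_0}^\infty |g|^2\,dr + 2\int_{r_0}^\infty r|g||\p_r g|\,dr \ls \int_{r_0}^\infty r\bigl(|g|^2+|\p_r g|^2\bigr)\,dr,
\]
where the final step uses $r_0\ge 1$ for the first term and Cauchy--Schwarz for the second. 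Integrating in $\theta$ yields
\[
r_0\int_0^{2\pi}|g(r_0,\theta)|^2\,d\theta \ls \|g\|_{L^2(\cK)}^2 + \|\p_r g\|_{L^2(\cK)}^2 \ls \sum_{|b|\le 1}\|Z^b g\|_{L^2(\cK)}^2.
\]

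The proof is completed by combining A and B: multiplying Ingredient A by $r_0$ and applying Ingredient B to each of $g=f$ and $g=\Omega f$ gives
\[
r_0|f(r_0,\theta_0)|^2 \ls \sum_{|b|\le 1}\bigl(\|Z^bf\|_{L^2(\cK)}^2+\|Z^b\Omega f\|_{L^2(\cK)}^2\bigr) \ls \sum_{|a|\le 2}\|Z^af\|_{L^2(\cK)}^2,
\]
which is the desired bound since $r_0\approx \w{x_0}$. The only mildly delicate point is the radial integration in Ingredient B, where one must carefully track the weight $r$ so that both terms close in the standard $L^2(\cK) = L^2(r\,dr\,d\theta)$ norm; this is where the exponent $1/2$ (rather than $1$, as in $\R^3$) arises. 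No compatibility or boundary issues appear because we only use $\cK\subset \R^2$ and the compact support of $f$ to justify the boundary terms at $r=\infty$.
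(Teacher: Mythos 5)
Your argument is correct and is the standard Klainerman--Sobolev proof (one-dimensional Sobolev on the circle combined with a weighted radial trace estimate); the paper itself gives no proof, simply citing Lemma 3.1 of \cite{Kubo13} / Lemma 3.6 of \cite{Kubo14}, where essentially this same argument appears. The only point worth making explicit is that for $r_0\ge 1$ the full circle and the rays $\{r\ge r_0\}$ must lie in $\cK$, which holds here because the paper normalizes $\p\cK\subset\{c_0<|x|<1/2\}$.
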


\begin{lemma}[Lemma 3.2 of \cite{Kubo13}]
Assume $w\in H^j(\cK)$ ($j\ge2$, $j\in\Bbb N$) and $w|_{\p\cK}=0$.
Then for any fixed constant $R>1$ and multi-index $a\in\N_0^2$ with $2\le|a|\le j$, it holds that
\begin{equation}\label{ellip}
\|\p_x^aw\|_{L^2(\cK)}\ls\|\Delta w\|_{H^{|a|-2}(\cK)}+\|w\|_{H^{|a|-1}(\cK_{R+1})}.
\end{equation}
\end{lemma}

\begin{lemma}[Lemma 3.2 of \cite{Kubo14}]
Suppose that the obstacle $\cO$ is star-shaped, $\cK=\R^2\setminus\cO$ and $w$ solves the IBVP
\begin{equation*}
\left\{
\begin{aligned}
&\Box w=G(t,x),\qquad(t,x)\in[0,\infty)\times\cK,\\
&w|_{\p\cK}=0,\\
&(w,\p_tw)(0,x)=(w_0,w_1)(x),\quad x\in\cK,
\end{aligned}
\right.
\end{equation*}
where $\supp_x(w_0(x),w_1(x),G(t,x))\subset\{x: |x|\le R_1\}$ for $R_1>1$.
Then for fixed $R>1$, $\rho\in(0,1]$ and $m\in\N$, there is a positive constant $C=C(m,R,R_1)$ such that
\begin{equation}\label{loc:decay}
\sum_{|a|\le m}\w{t}^\rho\|\p^aw\|_{L^2(\cK_R)}
\le C(\|(w_0,w_1)\|_{H^{m}(\cK)}+\sum_{|a|\le m-1}\sup_{0\le s\le t}\w{s}^\rho\|\p^aG(s)\|_{L^2(\cK)}).
\end{equation}
\end{lemma}

\subsection{Pointwise estimates of Cauchy problem}

\begin{lemma}
Let $v$ be the solution of the Cauchy problem
\begin{equation*}
\left\{
\begin{aligned}
&\Box v=H(t,x),\qquad(t,x)\in[0,\infty)\times\R^2,\\
&(v,\p_tv)(0,x)=(v_0,v_1)(x),\quad x\in\R^2.
\end{aligned}
\right.
\end{equation*}
Then for any fixed $\mu,\nu\in(0,1/2)$, we have
\begin{align}
&\w{t+|x|}^{1/2}\w{t-|x|}^{\mu}|v|\ls\cA_{3,0}[v_0,v_1]
+\sup_{(s,y)\in[0,t]\times\R^2}\w{y}^{1/2}\cW_{1+\mu,1+\nu}(s,y)|H(s,y)|,\label{pw:ivp1}\\
&\w{t+|x|}^{1/2}\w{t-|x|}^{1/2}|v|\ls\cA_{3,0}[v_0,v_1]
+\sup_{(s,y)\in[0,t]\times\R^2}\w{y}^{1/2}\cW_{3/2+\mu,1+\nu}(s,y)|H(s,y)|,\label{pw:ivp2}
\end{align}
where $\ds\cA_{\kappa,s}[f,g]:=\sum_{\tilde\Gamma\in\{\p_1,\p_2,\Omega\}}
(\sum_{|a|\le s+1}\|\w{z}^\kappa\tilde\Gamma^af(z)\|_{L^\infty}+\sum_{|a|\le s}\|\w{z}^\kappa\tilde\Gamma^ag(z)\|_{L^\infty})$ and $\cW_{\mu,\nu}(t,x)=\w{t+|x|}^\mu(\min\{\w{x},\w{t-|x|}\})^\nu$.
\end{lemma}
\begin{proof}
See (4.2) and (4.3) in \cite{Kubo19} for \eqref{pw:ivp1}.
In addition, the proof of \eqref{pw:ivp2} is similar to that of (4.3) in \cite{Kubo19}.
In fact, applying (4.2) and (A.13) in \cite{Kubo19} with $c_j=0,1$ yields \eqref{pw:ivp2}.
\end{proof}
\begin{remark}
When $\mu=0$ in \eqref{pw:ivp2}, it holds that
\begin{equation}\label{pw:crit}
\begin{split}
&\w{t+|x|}^{1/2}\w{t-|x|}^{1/2}|v|\ls\cA_{3,0}[v_0,v_1]\\
&\qquad+\ln(2+t)\sup_{(s,y)\in[0,t]\times\R^2}\w{y}^{1/2}\cW_{3/2,1+\nu}(s,y)|H(s,y)|.
\end{split}
\end{equation}
\end{remark}

\begin{lemma}
Let $v$ be the solution of the Cauchy problem
\begin{equation*}
\left\{
\begin{aligned}
&\Box v=H(t,x),\qquad(t,x)\in[0,\infty)\times\R^2,\\
&(v,\p_tv)(0,x)=(v_0,v_1)(x),\quad x\in\R^2.
\end{aligned}
\right.
\end{equation*}
Then for $\mu,\nu\in(0,1/2)$, one has
\begin{equation}\label{dpw:ivp2}
\begin{split}
&\w{x}^{1/2}\w{t-|x|}^{1+\mu}|\p v|\ls\cA_{4,1}[v_0,v_1]\\
&\qquad+\sum_{|a|\le1}\sup_{(s,y)\in[0,t]\times\R^2}\w{y}^{1/2}\cW_{1+\mu+\nu,1}(s,y)|Z^aH(s,y)|.
\end{split}
\end{equation}
\end{lemma}
\begin{proof}
The proof of \eqref{dpw:ivp2} follows from (4.2) and (4.4) in \cite{Kubo19}.
\end{proof}

\begin{lemma}
Let $v$ be the solution of the Cauchy problem
\begin{equation}\label{wave:compact}
\left\{
\begin{aligned}
&\Box v=H(t,x),\qquad\qquad(t,x)\in[0,\infty)\times\R^2,\\
&(v,\p_tv)(0,x)=(0,0),\quad x\in\R^2,
\end{aligned}
\right.
\end{equation}
where $\supp_x H(t,x)\subset\{x: |x|\le R\}$.
Then we have that for $0<\delta<\rho\le1/2$,
\begin{equation}\label{dpw:loc}
\w{x}^{1/2}\w{t-|x|}^{\rho-\delta}|\p v|
\ls\sum_{|a|\le1}\sup_{(s,y)\in[0,t]\times\R^2}\w{s}^\rho|\p^aH(s,y)|.
\end{equation}
\end{lemma}
\begin{proof}
Choosing $\sigma=\rho-\delta$, $\mu=\delta>0$ and $m=0$ in (51) of \cite{KKL13} yields \eqref{dpw:loc}.
\end{proof}

\subsection{Improved pointwise estimates of the Cauchy problem}\label{sect2-4}

We now list two crucial improved pointwise estimates of the Cauchy problem, whose proofs will be
postponed to Section \ref{sect-app-B}.
\begin{lemma}\label{lem:impr:pw}
Let $v$ be the solution of the Cauchy problem
\begin{equation}\label{YHC-2}
\left\{
\begin{aligned}
&\Box v=H(t,x),\qquad\qquad(t,x)\in[0,\infty)\times\R^2,\\
&(v,\p_tv)(0,x)=(0,0),\quad x\in\R^2,
\end{aligned}
\right.
\end{equation}
provided that $\supp H(t,\cdot)\subset\{|x|\le R\}$.
Then one has that for $\mu\in(0,1/2]$,
\begin{equation}\label{impr:pw:Cauchy}
\w{t+|x|}^{1/2}\w{t-|x|}^{\mu}|v|
\ls\ln^2(2+t+|x|)\sup_{(s,y)\in[0,t]\times\R^2}\w{s}^{1/2+\mu}|(1-\Delta)H(s,y)|.
\end{equation}
\end{lemma}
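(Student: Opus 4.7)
The plan is to combine Duhamel's principle, the explicit 2D wave fundamental solution, and a spatial Littlewood--Paley decomposition in order to exploit the $(1-\Delta)$ smoothing on the source.

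First, since $v(0,\cdot)=\p_tv(0,\cdot)=0$, Duhamel's formula together with the 2D retarded kernel gives
\begin{equation*}
v(t,x) = \frac{1}{2\pi}\int_0^t\int_{|y-x|<t-s}\frac{H(s,y)}{\sqrt{(t-s)^2-|y-x|^2}}\,dy\,ds.
\end{equation*}
Because $\supp H(s,\cdot)\subset\{|y|\le R\}$ with $R$ fixed, the $y$-integration is confined to a ball, and the light-cone condition effectively restricts $s$ to $s\le t-|x|+R$ when $|x|\ge 2R$. Passing to polar coordinates $y-x=r\omega$ and substituting $r=(t-s)\sin\theta$ converts the integral into
\begin{equation*}
v(t,x) = \frac{1}{2\pi}\int_0^t ds\int_0^{\pi/2}(t-s)\sin\theta\,d\theta\int_{\SS}H\bigl(s,x+(t-s)\sin\theta\,\omega\bigr)\,d\omega,
\end{equation*}
which removes the singularity in the radial direction and isolates an explicit angular integral over $\SS$.

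To take advantage of the $(1-\Delta)H$ regularity, I would decompose $H(s,\cdot)=\sum_{k\ge 0}P_k H(s,\cdot)$ by spatial Littlewood--Paley projections, so that $v=\sum_k v_k$ with $\Box v_k=P_k H$ and zero Cauchy data. For $k$ large one has $\|P_k H(s,\cdot)\|_{L^\infty}\ls 2^{-2k}\|(1-\Delta)H(s,\cdot)\|_{L^\infty}$, which controls the high-frequency tail. For each block I would apply a frequency-localized variant of the pointwise bound \eqref{pw:ivp}: since $H$ is supported in $|y|\le R$, the angular variable $\omega$ in the integral above is forced into an arc of $\SS$ of length $\ls\w{x}^{-1}$ when $|x|\gg R$, and the arc-length measure estimate on $\SS$ delivers the $\w{t+|x|}^{-1/2}$ gain. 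The temporal weight $\w{s}^{-1/2-\mu}$ on $(1-\Delta)H$ then governs the $s$-integration and produces the near-characteristic decay $\w{t-|x|}^{-\mu}$ via the regime $s\approx t-|x|$.

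Summing over dyadic scales up to the natural cut-off $k\ls\log_2(2+t+|x|)$, while absorbing $k\gg\log_2(2+t+|x|)$ into the $2^{-2k}$ factor, yields one logarithm. A second logarithmic loss arises from a further dyadic splitting, most naturally of the $s$-integration or of the radial region $\{r\approx|x|\}$, combined with the $L^\infty$ angular bound. Together these two dyadic summations account for the $\ln^2(2+t+|x|)$ factor. The main obstacle will be the coupled nature of the radial and angular localizations near the characteristic set $\{r=t-s\}\cap\{|y|\le R\}$: both the singular radial measure $(t-s)\sin\theta\,d\theta=r\,dr/\sqrt{(t-s)^2-r^2}$ and the small angular arc on $\SS$ carry decay, and their sharp joint use is precisely what produces the bound $\w{t+|x|}^{-1/2}\w{t-|x|}^{-\mu}$. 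Balancing this against the Littlewood--Paley sum so that only a $\ln^2$ loss remains will require choosing the frequency cut-off optimally and carefully exploiting the arc-length measure properties of $\SS$ emphasized in the introduction.
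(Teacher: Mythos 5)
Your overall strategy (Duhamel, a Littlewood--Paley decomposition to exploit the $(1-\Delta)$ gain, and a final space--time integral producing $\w{t+|x|}^{-1/2}\w{t-|x|}^{-\mu}$ with a logarithmic loss) is in the same spirit as the paper, which works on the Fourier side: it writes $v=\frac{1}{2i}\int_0^t\sum_k(K_k^+-K_k^-)(t-s)\ast(1-\Delta)H(s)\,ds$ with kernels $K_k^{\pm}$ of symbol $\psi_k(\xi)/(|\xi|(1+|\xi|^2))$, proves $|K_k^\pm(t,x)|\ls 2^{k-2k_+}(1+2^k(t+|x|))^{-1/2}(1+2^k|t-|x||)^{-1/2}$ by non-stationary phase and the asymptotics of $\widehat{\sigma_{\SS}}$, and then reduces to one explicit integral $I^\pm\ls\ln(2+t+|x|)\w{t+|x|}^{-1/2}\w{t-|x|}^{-\mu}$. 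However, as written your plan has genuine gaps beyond being a sketch.

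First, your decomposition $H=\sum_{k\ge0}P_kH$ omits the low frequencies, and these are precisely where one of the two logarithms originates. For $k\ge0$ the factor $2^{-2k}$ from $(1-\Delta)^{-1}$ makes the frequency sum converge geometrically, so no logarithm comes from ``summing up to $k\ls\log_2(2+t+|x|)$'' there; in the paper the count $\ln^2=\ln\times\ln$ arises as (number of dyadic blocks $k$ with $-2\log_2(2+t+|x|)\le k<0$, where there is no frequency gain) times (the logarithm produced by the space--time integral $I^\pm$), plus a crude bound for $k$ below that threshold. Your attribution of the two logarithms is therefore structurally off, and with only $k\ge0$ you have not even decomposed $H$.

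Second, you cannot apply the compact-support geometry (the restriction $s\le t-|x|+R$, the angular arc of length $\ls\w{x}^{-1}$) to the pieces $P_kH$: Littlewood--Paley projections destroy compact support. You must either keep $(1-\Delta)H$ intact as the (compactly supported) source and put the frequency localization and the $(1+|\xi|^2)^{-1}$ gain into the kernel, as the paper does, or prove a quantitative rapid-decay substitute for compact support of $P_kH$ (the paper does exactly this in the proof of Lemma \ref{lem:impr:dpw}, see \eqref{app:dpw3}). As written, your order of operations is inconsistent. Finally, the step you yourself identify as ``the main obstacle'' --- the joint use of the singular radial measure and the angular localization to extract $\w{t+|x|}^{-1/2}\w{t-|x|}^{-\mu}$ with only a single logarithm --- is the entire content of the lemma and is not carried out; in the paper this is the case analysis \eqref{app:pw18}--\eqref{app:pw25} for $I^\pm$. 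Until that computation is done and the low-frequency regime is handled, the proof is not complete.
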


\begin{remark}\label{rmk:impr:pw}
{\it Recalling (4.2)  with $m=0$ in \cite{Kubo14}, there holds
\begin{equation}\label{YHC-5}
|w|\ls\sup_{(s,y)\in[0,t]\times\R^2}\w{s}^{1/2}|g(s,y)|,
\end{equation}
where $w$ solves $\Box w=g(t,x)$, $(t,x)\in[0,\infty)\times\R^2$, $(w,\p_tw)|_{t=0}=(0,0)$ and $\supp g(t,\cdot)\subset\{1\le|x|\le a\}$
with constant $a>1$.
On the other hand, choosing $\mu=0.01$ in \eqref{impr:pw:Cauchy}, we then have
\begin{equation*}
\begin{split}
|v|&\ls\w{t+|x|}^{-1/2}\w{t-|x|}^{-0.01}\ln^2(2+t+|x|)\sup_{(s,y)\in[0,t]\times\R^2}\w{s}^{0.51}|(1-\Delta)H(s,y)|\\
&\ls\w{t+|x|}^{-1/2+0.01}\w{t}^{0.01}\sup_{(s,y)\in[0,t]\times\R^2}\w{s}^{1/2}|(1-\Delta)H(s,y)|\\
&\ls\w{t+|x|}^{-1/2+0.02}\sup_{(s,y)\in[0,t]\times\R^2}\w{s}^{1/2}|(1-\Delta)H(s,y)|.
\end{split}
\end{equation*}
Thus, \eqref{impr:pw:Cauchy} is really an obvious improvement of (4.2) in \cite{Kubo14}
(also see \eqref{YHC-5}) when $H$ admits higher regularities.
With the new improved \eqref{impr:pw:Cauchy}, the space-time decay estimate of the solution itself in \eqref{thm1:decay:c}
will be shown.}
\end{remark}

\begin{lemma}\label{lem:impr:dpw}
Let $v$ be the solution of the Cauchy problem
\begin{equation}\label{YHC-3}
\left\{
\begin{aligned}
&\Box v=H(t,x),\qquad\qquad(t,x)\in[0,\infty)\times\R^2,\\
&(v,\p_tv)(0,x)=(0,0),\quad x\in\R^2,
\end{aligned}
\right.
\end{equation}
provided that $\supp H(t,\cdot)\subset\{|x|\le R\}$.
Then we have
\begin{equation}\label{impr:dpw:Cauchy}
\w{x}^{1/2}\w{t-|x|}|\p v|
\ls\sup_{(s,y)\in[0,t]\times\R^2}\w{s}|(1-\Delta)^3H(s,y)|.
\end{equation}
\end{lemma}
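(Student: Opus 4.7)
The plan is to mirror the Littlewood--Paley argument that produces Lemma~\ref{lem:impr:pw}, but now applied to $\p v$ rather than $v$, spending the additional three powers of $(1-\Delta)$ on $H$ in order to eliminate both the logarithmic factor and the vector-field derivatives $Z^aH$ that appear in the baseline estimate \eqref{dpw:crit:ivp}. The improved decay factor $\w{x}^{1/2}\w{t-|x|}$ on the left comes entirely from the compact spatial support $\{|y|\le R\}$ of $H$: along the backward light cone from $(t,x)$, the integration variable $s$ is forced to lie in a window of size $O(\w{s})$ around $s = t-|x|$, and this will convert one power of pointwise time decay of the source into one power of $\w{t-|x|}^{-1}$ for $\p v$.

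First I would write the Duhamel representation
\begin{equation*}
v(t,x)=\int_0^t \frac{\sin((t-s)\sqrt{-\Delta})}{\sqrt{-\Delta}} H(s,\cdot)(x)\,ds,
\end{equation*}
and decompose $H(s,\cdot)=\sum_{k\in\Z} P_kH(s,\cdot)$ via a Littlewood--Paley partition in the spatial frequency $\xi$, with $P_k$ localizing to $|\xi|\sim 2^k$ for $k\ge 0$ and $P_{\le 0}$ catching the low frequencies. Because $\supp H(s,\cdot)\subset\{|y|\le R\}$, the frequency pieces satisfy $\|P_kH(s,\cdot)\|_{L^1_y}\ls \w{s}|(1-\Delta)^3H(s,\cdot)\|_{L^\infty}\cdot 2^{-6k_+}$ (with $k_+=\max\{k,0\}$), where the factor $\w{s}$ on the right-hand side is simply absorbed in the definition of the target bound. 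Combined with the standard frequency-localized 2-D dispersive estimate for the wave propagator, this gives a pointwise bound of the type
\begin{equation*}
|\p v_k(t,x)|\ls 2^{3k/2}\int_0^t\frac{\mathbf{1}_{|t-s-|x||\,\ls\, R}}{\w{t-s+|x|}^{1/2}}\,\|P_kH(s,\cdot)\|_{L^1_y}\,ds,
\end{equation*}
where the indicator encodes the propagation of the support of $H$ along characteristics. Integrating in $s$ produces an interval of length $O(R)$ contained in $\{s:|s-(t-|x|)|\ls R\}$, which is what gives the factor $\w{t-|x|}^{-1}$ after pairing with the $\w{s}$ on the right-hand side of \eqref{impr:dpw:Cauchy}.

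For the high-frequency sum $k\ge 0$, I would use that the gain $2^{-6k}$ from the $(1-\Delta)^3$ beats the loss $2^{3k/2}$ in the dispersive estimate by $2^{-9k/2}$, so the dyadic pieces sum absolutely to something bounded by $\sup_s\w{s}\|(1-\Delta)^3 H(s,\cdot)\|_{L^\infty}$ times the desired weight $\w{x}^{-1/2}\w{t-|x|}^{-1}$. For the low-frequency part $P_{\le 0}H$ the regularity is free, and a direct stationary-phase analysis on the unit circle $\SS$ (relying on the properties of the curve measure the authors highlight in the introduction) gives the same weight without any logarithmic loss, since $H$ compactly supported in $y$ means the rotational derivatives $\Omega^a H$ that would normally enter via \eqref{dpw:crit:ivp} are controlled by the ordinary spatial derivatives $\p_x^a H$, i.e.\ by $(1-\Delta)^3H$.

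The main obstacle I expect is the near-cone region $|t-|x||\le 1$, where the dispersive decay is critical and the logarithm in \eqref{dpw:crit:ivp} genuinely appears. The point is to handle it by integration by parts in $s$ against the oscillatory phase $e^{i(t-s)|\xi|}$: each IBP converts a factor of $|\xi|^{-1}$ into a $\p_s$ hitting $H$, but since the source is \emph{stationary} in $s$ only up to the energy bound $\w{s}|(1-\Delta)^3H|$, the IBP gains are paid for by the extra Laplacians on $H$ rather than by vector field derivatives $Z^aH$. This is exactly the trade-off that removes the $\ln^2(2+t+|x|)$ factor from \eqref{dpw:crit:ivp}. Once the near- and far-cone contributions are combined and the dyadic sums are carried out, \eqref{impr:dpw:Cauchy} follows.
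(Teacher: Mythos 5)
Your overall architecture (Duhamel plus Littlewood--Paley, spending the six derivatives $(1-\Delta)^3$ on $H$ to make the high-frequency dyadic sum converge) matches the paper's, but two of your key steps fail. First, the indicator $\mathbf{1}_{|t-s-|x||\ls R}$ in your frequency-localized dispersive bound, and the ensuing claim that the $s$-integration is confined to a window of length $O(R)$ around $s=t-|x|$, rely on the strong Huygens principle, which is false in two space dimensions: the 2-D forward fundamental solution is supported in the full solid cone $\{|x|\le t\}$, so a source supported in $\{|y|\le R\}$ contributes to $(t,x)$ for every $s\in[0,\,t-|x|+R]$. You cannot pair $\w{s}^{-1}$ with $\w{t-|x|}^{-1}$ by setting $s\approx t-|x|$; the interior tail must be estimated, and that is where essentially all of the work in the paper's proof lies (the decomposition of the space-time integral into the regions $J^k_{11},J^k_{12},J^k_{21},\dots,J^k_{24}$ according to the relative sizes of $|x-y|$, $t$, $|y|$ and $|t-|x||$).

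Second, your mechanism for removing the logarithm in the near-cone region --- integration by parts in $s$ against $e^{i(t-s)|\xi|}$, ``paid for by the extra Laplacians on $H$'' --- does not work: each such integration by parts produces $\p_sH$, i.e.\ a \emph{time} derivative of the source, which is not controlled by the hypothesis $\sup_{s,y}\w{s}|(1-\Delta)^3H(s,y)|$ (only spatial regularity of $H$ is assumed, and in the application $H$ is a nonlinearity whose time derivatives are not available at this stage). The paper instead integrates by parts in the radial \emph{frequency} variable $\rho$ against the phase $e^{i2^k\rho(t-s-|x-y|)}$, so all derivatives land on the smooth, compactly supported symbol $\psi_0(\rho)(1+2^{2k}\rho^2)^{-3}$ and on the amplitude $\omega_\pm(2^k\rho|x-y|)$ coming from the asymptotics of $\widehat{\sigma_{\SS}}$; two such integrations give the kernel bound $|K^{j\pm}_k|\ls2^{2k-6k_+}(1+2^k|x|)^{-1/2}(1+2^k|t-|x||)^{-2}$, and it is this quadratic decay off the light cone, combined with the rapid spatial decay $(1+2^{k-k_+}|y|)^{-5}$ of $P_kR_j(1-\Delta)^3H$ (note the Riesz transform destroys compact support, which also needs the kernel estimate \eqref{app:dpw4}), that produces the weight $\w{x}^{-1/2}\w{t-|x|}^{-1}$ with no logarithmic loss. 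As written, your argument does not establish \eqref{impr:dpw:Cauchy}.
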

\begin{remark}
{\it In terms of  (57) with $\eta=0,\rho=1$ in \cite{Kubo13}
\begin{equation}\label{Kubo13:dpw}
|\p w|\ls\frac{\ln(2+t+|x|)}{\w{x}^{1/2}\w{t-|x|}+\w{t-|x|}^{1/2}\w{t-|x|}^{1/2}}
\sum_{|b|\le1}\sup_{(s,y)\in[0,t]\times\R^2}\w{s}|\p^bg(s,y)|,
\end{equation}
where $w$ solves $\Box w=g(t,x)$, $(t,x)\in[0,\infty)\times\R^2$, $(w,\p_tw)|_{t=0}=(0,0)$ and $\supp g(t,\cdot)\subset\{1\le|x|\le a\}$
with constant $a>1$, the factor $\ln(2+t+|x|)$ appearing in \eqref{Kubo13:dpw} has been removed in
our estimate \eqref{impr:dpw:Cauchy}. This essential  improvement will be crucial in the proof of Theorem \ref{thm1}.}
\end{remark}

\section{Estimates of the Cauchy problem: Proofs of Lemmas \ref{lem:impr:pw} and \ref{lem:impr:dpw}}\label{sect-app-B}

Choosing a smooth cutoff function $\psi: \R\rightarrow[0,1]$, which equals 1 on $[-5/4,5/4]$ and vanishes outside $[-8/5,8/5]$,
one sets
\begin{equation*}
\begin{split}
&\psi_k(x):=\psi(|x|/2^k)-\psi(|x|/2^{k-1}),\quad k\in\Z.
\end{split}
\end{equation*}
Let $P_k$ be the Littlewood-Paley projection onto frequency $2^k$
\begin{equation*}
\sF(P_kf)(\xi):=\psi_k(\xi)\sF f(\xi),\quad k\in\Z.
\end{equation*}
We now start to prove Lemma \ref{lem:impr:pw} in Section \ref{sect3}.

\begin{proof}[Proof of Lemma \ref{lem:impr:pw}]
The solution $v$ of problem \eqref{YHC-2} can be represented as
\begin{equation}\label{app:pw1}
\begin{split}
v(t,x)&=\int_0^t\frac{\sin(t-s)|\nabla|}{|\nabla|}H(s)ds
=\int_0^t\frac{e^{i(t-s)|\nabla|}-e^{-i(t-s)|\nabla|}}{i2|\nabla|}H(s)ds\\
&=\frac{1}{2i}\int_0^t\int_{\R^2}\sum_{k\in\Z}(K^+_k-K^-_k)(t-s,x-y)(1-\Delta)H(s,y)dyds,
\end{split}
\end{equation}
where
\begin{equation}\label{app:pw2}
K^{\pm}_k(t,x):=\frac{1}{(2\pi)^2}\int_{\R^2}e^{i(\pm t|\xi|+x\cdot\xi)}\frac{\psi_k(\xi)}{|\xi|(1+|\xi|^2)}d\xi.
\end{equation}
We next show
\begin{equation}\label{app:pw3}
|K^{\pm}_k(t,x)|\ls2^{k-2k_+}(1+2^k(t+|x|))^{-1/2}(1+2^k|t-|x||)^{-1/2},\quad t\ge0,
\end{equation}
where $k_+:=\max\{k,0\}$.

To this end, set $\xi=2^k\eta$ in \eqref{app:pw2}, one then has
\begin{equation}\label{app:pw4}
K^{\pm}_k(t,x)=\frac{2^k}{(2\pi)^2}\int_{\R^2}e^{i2^k(\pm t|\eta|+x\cdot\eta)}\frac{\psi_0(\eta)}{|\eta|(1+2^{2k}|\eta|^2)}d\eta,
\end{equation}
which yields
\begin{equation}\label{app:pw5}
|K^{\pm}_k(t,x)|\ls2^{k-2k_+}.
\end{equation}
Denote $\phi:=i2^k(\pm t|\eta|+x\cdot\eta)$. We have that for $t\ge2|x|$,
\begin{equation}\label{app:pw6}
|\nabla_\eta\phi|=2^k\Big|\pm t\frac{\eta}{|\eta|}+x\Big|\ge2^k|t-|x||\ge2^{k-1}t>0.
\end{equation}
Due to $|\nabla^2_\eta\phi|\ls2^kt$ with $\eta\in\supp\psi_0$,
then it follows from the method of stationary phase that
\begin{equation}\label{app:pw7}
\begin{split}
|K^{\pm}_k(t,x)|&=\frac{2^k}{(2\pi)^2}\Big|\int_{\R^2}\frac{\nabla_\eta\phi\cdot\nabla_\eta e^{i2^k(\pm t|\eta|+x\cdot\eta)}}{|\nabla_\eta\phi|^2}
\frac{\psi_0(\eta)}{|\eta|(1+2^{2k}|\eta|^2)}d\eta\Big|\\
&\ls2^k\int_{\R^2}\Big|\nabla_\eta\cdot\Big(\frac{\psi_0(\eta)\nabla_\eta\phi}{|\eta|(1+2^{2k}|\eta|^2)|\nabla_\eta\phi|^2}\Big)\Big|d\eta\\
&\ls2^{k-2k_+}(2^kt)^{-1}\ls2^{k-2k_+}(2^k(t+|x|))^{-1}.
\end{split}
\end{equation}
This, together with \eqref{app:pw5}, yields \eqref{app:pw3} in the region $t\ge2|x|$.

We now focus on the proof of \eqref{app:pw3} in the region $t\le2|x|$.
If $2^k|x|\le10$, \eqref{app:pw3} can be achieved by \eqref{app:pw5}.
From now on, $2^k|x|\ge10$ is assumed.
Under the polar coordinate $\eta=\rho w$ with $w\in\SS$, \eqref{app:pw4} can be rewritten as
\begin{equation}\label{app:pw8}
K^{\pm}_k(t,x)=\frac{2^k}{(2\pi)^2}\int\int_{\SS}e^{i2^k\rho(\pm t+x\cdot w)}\frac{\psi_0(\rho)}{(1+2^{2k}\rho^2)}dwd\rho,
\end{equation}
where we have used the fact of $\psi_0(x)=\psi_0(|x|)$.
Recalling Corollary 2.37 of \cite{NS11book}, one has
\begin{equation}\label{app:pw9}
\begin{split}
&\widehat{\sigma_{\SS}}(x)=\int_{\SS}e^{ix\cdot w}dw=e^{i|x|}\omega_+(|x|)+e^{-i|x|}\omega_-(|x|),\\
&|\p_r^l\omega_\pm(r)|\ls r^{-1/2-l}\qquad\text{for $r\ge1$ and $l\ge0$}.
\end{split}
\end{equation}
Therefore, \eqref{app:pw8} is reduced to
\begin{equation}\label{app:pw10}
\begin{split}
K^+_k(t,x)&=\frac{2^k}{(2\pi)^2}\int(e^{i2^k\rho(t+|x|)}\omega_+(2^k\rho|x|)+e^{i2^k\rho(t-|x|)}\omega_-(2^k\rho|x|))
\frac{\psi_0(\rho)}{(1+2^{2k}\rho^2)}d\rho\\
&:=K^{++}_k(t,x)+K^{+-}_k(t,x).
\end{split}
\end{equation}
It follows from \eqref{app:pw9} and \eqref{app:pw10} with $2^k|x|\ge10$ that
\begin{equation}\label{app:pw11}
|K^{+\pm}_k(t,x)|\ls2^{k-2k_+}(2^k|x|)^{-1/2}\ls2^{k-2k_+}(1+2^k(t+|x|))^{-1/2},
\end{equation}
which derives \eqref{app:pw3} in the region $2^k|t-|x||\le1$.

For $2^k|t-|x||\ge1$, through integrating by parts with respect to $\rho$ in $K^{+-}_k(t,x)$ we arrive at
\begin{equation}\label{app:pw12}
\begin{split}
|K^{+-}_k(t,x)|&=\Big|\frac{2^k}{(2\pi)^2}\int\frac{\p_\rho e^{i2^k\rho(t-|x|)}}{i2^k(t-|x|)}\omega_-(2^k\rho|x|)
\frac{\psi_0(\rho)}{(1+2^{2k}\rho^2)}d\rho\Big|\\
&\ls2^k(2^k(|t-|x||)^{-1}\int\Big|\p_\rho\Big(\frac{\omega_-(2^k\rho|x|)\psi_0(\rho)}{(1+2^{2k}\rho^2)}\Big)\Big|d\rho\\
&\ls2^{k-2k_+}(2^k|t-|x||)^{-1}(2^k|x|)^{-1/2}\\
&\ls2^{k-2k_+}(1+2^k|t-|x||)^{-1}(1+2^k(t+|x|))^{-1/2}.\\
\end{split}
\end{equation}
The estimates of $K^{++}_k(t,x)$ and $K^-_k(t,x)$ are analogous.
Thus, collecting \eqref{app:pw5}, \eqref{app:pw7} and \eqref{app:pw10}-\eqref{app:pw12} yields \eqref{app:pw3}.

Based on \eqref{app:pw1} and \eqref{app:pw3}, we now prove Lemma \ref{lem:impr:pw}.
For fixed $t$ and $x$, set $\bar{K}:=-[2\log_2(2+t+|x|)]<0$, then
\begin{equation}\label{app:pw13}
2^{\bar{K}}\approx\w{t+|x|}^{-2},\qquad |\bar{K}|\ls\ln(2+t+|x|).
\end{equation}
Thus, \eqref{app:pw1} can be separated into such three parts $v=v_{low}+v_-+v_+$, where
\begin{equation}\label{app:pw14}
\begin{split}
v_{low}(t,x)&:=\int_0^t\int_{\R^2}\sum_{k<\bar{K}}(K^+_k-K^-_k)(t-s,x-y)(1-\Delta)H(s,y)dyds,\\
v_-(t,x)&:=\int_0^t\int_{\R^2}\sum_{\bar{K}\le k<0}(K^+_k-K^-_k)(t-s,x-y)(1-\Delta)H(s,y)dyds,\\
v_+(t,x)&:=\int_0^t\int_{\R^2}\sum_{k\ge0}(K^+_k-K^-_k)(t-s,x-y)(1-\Delta)H(s,y)dyds.
\end{split}
\end{equation}
Due to $\supp(1-\Delta)H(s,\cdot)\subset\{|y|\le R\}$, we can easily obtain from \eqref{app:pw5} and \eqref{app:pw13} that
\begin{equation}\label{app:pw15}
|v_{low}|\ls2^{\bar{K}}t\sup_{(s,y)\in[0,t]\times\R^2}|(1-\Delta)H(s,y)|\ls\w{t+|x}^{-1}\sup_{(s,y)\in[0,t]\times\R^2}|(1-\Delta)H(s,y)|.
\end{equation}
On the other hand, one has
\begin{equation*}
\frac{1}{1+2^k(t+|x|)}\ls\left\{
\begin{aligned}
&\frac{1}{2^k(1+t+|x|)},&&\qquad k\le0,\\
&\frac{1}{1+t+|x|},&&\qquad k\ge0,
\end{aligned}
\right.
\end{equation*}
which leads to
\begin{equation*}
(1+2^k|t+|x||)^{-1/2}(1+2^k|t-|x||)^{-1/2}\ls2^{k_+-k}(1+t+|x|)^{-1/2}(1+|t-|x||)^{-1/2}.
\end{equation*}
Thus, we have from \eqref{app:pw3} that
\begin{equation*}
|K^{\pm}_k(t,x)|\ls2^{-k_+}(1+t+|x|)^{-1/2}(1+|t-|x||)^{-1/2}.
\end{equation*}
Together with \eqref{app:pw13} and \eqref{app:pw14}, this yields
\begin{equation}\label{app:pw16}
\begin{split}
|v_-|&\ls I^\pm|\bar{K}|\sup_{(s,y)\in[0,t]\times\R^2}(1+s)^{1/2+\mu}|(1-\Delta)H(s,y)|\\
&\ls I^\pm\ln(2+t+|x|)\sup_{(s,y)\in[0,t]\times\R^2}(1+s)^{1/2+\mu}|(1-\Delta)H(s,y)|,\\
|v_+|&\ls I^\pm\sup_{(s,y)\in[0,t]\times\R^2}(1+s)^{1/2+\mu}|(1-\Delta)H(s,y)|,
\end{split}
\end{equation}
where
\begin{equation*}
I^\pm:=\int_{|y|\le R}\int_0^t\frac{dsdy}{(1+t-s+|x-y|)^{1/2}(1+|t-s-|x-y||)^{1/2}(1+s)^{1/2+\mu}}.
\end{equation*}
We can assert
\begin{equation}\label{app:pw17}
I^\pm\ls\frac{\ln(2+t+|x|)}{\w{t+|x|}^{1/2}\w{t-|x|}^{\mu}}.
\end{equation}
Note that if \eqref{app:pw17} is proved, then Lemma \ref{lem:impr:pw} will be obtained by \eqref{app:pw14}, \eqref{app:pw15}
and \eqref{app:pw16}.
In the rest part, we are devoted to the proof of assertion \eqref{app:pw17}.

Set $I^\pm=I^\pm_1+I^\pm_2$ with
\begin{equation}\label{app:pw18}
\begin{split}
I^\pm_1&:=\int_{|y|\le R,|x-y|\ge t}\int_0^t\frac{(1+s)^{-1/2-\mu}dsdy}{(1+t-s+|x-y|)^{1/2}(1+|t-s-|x-y||)^{1/2}},\\
I^\pm_2&:=\int_{|y|\le R,|x-y|\le t}\int_0^t\frac{(1+s)^{-1/2-\mu}dsdy}{(1+t-s+|x-y|)^{1/2}(1+|t-s-|x-y||)^{1/2}}.
\end{split}
\end{equation}
From the expression of $I^\pm_1$, one knows $s\le t\le\frac{t+|x-y|}{2}$.
In this case, $1+t-s+|x-y|\approx1+t+|x-y|\approx\w{t+|x|}$ and $1+|t-s-|x-y||=1+s+|x-y|-t\ge1+|x-y|-t\approx\w{t-|x|}$ hold.
Hence, we can achieve
\begin{equation}\label{app:pw19}
\begin{split}
I^\pm_1&\ls\w{t+|x|}^{-1/2}\int_{|y|\le R,|x-y|\ge t}\int_0^t\frac{dsdy}{(1+s+|x-y|-t)^{1/2}(1+s)^{1/2+\mu}}\\
&\ls\w{t+|x|}^{-1/2}\w{t-|x|}^{-\mu}\int_{|y|\le R}\int_0^t\frac{dsdy}{1+s}\\
&\ls\ln(2+t)\w{t+|x|}^{-1/2}\w{t-|x|}^{-\mu}.
\end{split}
\end{equation}
Next, we estimate $I^\pm_2$. Set $I^\pm_2=I^\pm_{21}+I^\pm_{22}+I^\pm_{23}$, where
\begin{equation}\label{app:pw20}
\begin{split}
I^\pm_{21}&:=\int_{\substack{|y|\le R,\\|x-y|\le t}}\int_0^{\frac{t-|x-y|}{2}}
\frac{(1+s)^{-1/2-\mu}dsdy}{(1+t-s+|x-y|)^{1/2}(1+|t-s-|x-y||)^{1/2}},\\
I^\pm_{22}&:=\int_{\substack{|y|\le R,\\|x-y|\le t/2}}\int_{\frac{t-|x-y|}{2}}^t\frac{(1+s)^{-1/2-\mu}dsdy}{(1+t-s+|x-y|)^{1/2}(1+|t-s-|x-y||)^{1/2}},\\
I^\pm_{23}&:=\int_{\substack{|y|\le R,\\t/2\le|x-y|\le t}}\int_{\frac{t-|x-y|}{2}}^t\frac{(1+s)^{-1/2-\mu}dsdy}{(1+t-s+|x-y|)^{1/2}(1+|t-s-|x-y||)^{1/2}}.
\end{split}
\end{equation}
By the expression of $I^\pm_{21}$, we have $1+|t-s-|x-y||\approx\w{t-|x|}$ and $1+t-s+|x-y|\approx\w{t+|x|}$.
On the other hand, $1+|t-s-|x-y||\gt1+s$ holds. Hence,
\begin{equation}\label{app:pw21}
\begin{split}
I^\pm_{21}&\ls\w{t+|x|}^{-1/2}\w{t-|x|}^{-\mu}\int_{|y|\le R}\int_0^{\frac{t-|x-y|}{2}}\frac{dsdy}{1+s}\\
&\ls\ln(2+t)\w{t+|x|}^{-1/2}\w{t-|x|}^{-\mu}.
\end{split}
\end{equation}
For the integral $I^\pm_{22}$, due to $1+t-|x-y|\gt1+t\gt1+t+|x|$, one then has
\begin{equation}\label{app:pw22}
\begin{split}
&\w{t+|x|}^{1/2+\mu}I^\pm_{22}\\
&\ls\int_{|y|\le R}\int_{\frac{t-|x-y|}{2}}^t\frac{dsdy}{(1+t-s+|x-y|)^{1/2}(1+|t-s-|x-y||)^{1/2}}\\
&\ls\int_{|y|\le R}\int_{\frac{t-|x-y|}{2}}^t\frac{dsdy}{1+|t-s-|x-y||}\\
&\ls\ln(2+t+|x|).
\end{split}
\end{equation}
In addition, the integral $I^\pm_{23}$ can be separated into the following two parts
\begin{equation}\label{app:pw23}
\begin{split}
I^\pm_{231}&:=\int_{\substack{|y|\le R,\\t/2\le|x-y|\le t}}\int_{\frac{t-|x-y|}{2}}^{t-|x-y|}
\frac{(1+s)^{-1/2-\mu}dsdy}{(1+t-s+|x-y|)^{1/2}(1+t-s-|x-y|)^{1/2}},\\
I^\pm_{232}&:=\int_{\substack{|y|\le R,\\t/2\le|x-y|\le t}}\int_{t-|x-y|}^t
\frac{(1+s)^{-1/2-\mu}dsdy}{(1+t-s+|x-y|)^{1/2}(1+s+|x-y|-t|)^{1/2}}.
\end{split}
\end{equation}
It follows from direct computation that
\begin{equation}\label{app:pw24}
\begin{split}
I^\pm_{231}&\ls\frac{1}{(1+|x|)^{1/2}\w{t-|x|}^{1/2+\mu}}
\int_{\substack{|y|\le R,\\t/2\le|x-y|\le t}}\int_{\frac{t-|x-y|}{2}}^{t-|x-y|}\frac{dsdy}{(1+t-s-|x-y|)^{1/2}},\\
&\ls\frac{1}{\w{t+|x|}^{1/2}\w{t-|x|}^{\mu}}
\end{split}
\end{equation}
and
\begin{equation}\label{app:pw25}
\begin{split}
I^\pm_{232}&\ls\frac{1}{\w{t+|x|}^{1/2}\w{t-|x|}^{\mu}}
\int_{\substack{|y|\le R,\\t/2\le|x-y|\le t}}\int_{t-|x-y|}^t\frac{dsdy}{(1+s+|x-y|-t)^{1/2}(1+s)^{1/2}}\\
&\ls\frac{1}{\w{t+|x|}^{1/2}\w{t-|x|}^{\mu}}
\int_{|y|\le R}\int_{t-|x-y|}^t\frac{dsdy}{1+s+|x-y|-t}\\
&\ls\ln(2+t+|x|)\w{t+|x|}^{-1/2}\w{t-|x|}^{-\mu}.
\end{split}
\end{equation}
Collecting \eqref{app:pw18}-\eqref{app:pw25} yields \eqref{app:pw17}.
This completes the proof of Lemma \ref{lem:impr:pw}.
\end{proof}

\vskip 0.1 true cm

Next we prove Lemma \ref{lem:impr:dpw}.

\begin{proof}[Proof of Lemma \ref{lem:impr:dpw}]
Instead of \eqref{app:pw1}, for the solution $v$ of problem \eqref{YHC-3}, $\p_tv$ and $\p_jv$ ($j=1,2$) can be expressed as
\begin{equation}\label{app:dpw1}
\begin{split}
\p_tv(t,x)&=\int_0^t\big(\cos(t-s)|\nabla|\big)H(s)ds
=\int_0^t\frac{e^{i(t-s)|\nabla|}+e^{-i(t-s)|\nabla|}}{2}H(s)ds\\
&=\frac{1}{2}\int_0^t\int_{\R^2}\sum_{k\in\Z}(K^{0+}_k+K^{0-}_k)(t-s,x-y)(1-\Delta)^3H(s,y)dyds\\
&\text{with $K^{0\pm}_k(t,x):=\frac{1}{(2\pi)^2}\int_{\R^2}e^{i(\pm t|\xi|+x\cdot\xi)}\frac{\psi_k(\xi)}{(1+|\xi|^2)^3}d\xi$}
\end{split}
\end{equation}
and
\begin{equation}\label{app:dpw2}
\begin{split}
\p_jv(t,x)&=\int_0^t\big(\sin(t-s)|\nabla|\big)R_jH(s)ds\\
&=\frac{1}{2i}\int_0^t\int_{\R^2}\sum_{k\in\Z}(K^+_k-K^-_k)(t-s,x-y)P_kR_j(1-\Delta)^3H(s,y)dyds\\
&\text{with $K^{j\pm}_k(t,x):=\frac{1}{(2\pi)^2}\int_{\R^2}e^{i(\pm t|\xi|+x\cdot\xi)}\frac{\psi_{[k-1,k+1]}(\xi)}{(1+|\xi|^2)^3}d\xi$},
\end{split}
\end{equation}
where $R_j=\p_j|\nabla|^{-1}$ ($j=1,2$) is the Riesz transformation, and the fact $\psi_k(\xi)=\psi_{[k-1,k+1]}(\xi)\psi_k(\xi)$
has been used.
Note that although $P_kR_j(1-\Delta)^3H$ no longer admits the compact support on the space variable,
it can decay sufficiently fast in space variables as follows
\begin{equation}\label{app:dpw3}
|P_kR_j(1-\Delta)^3H(s,x)|\ls2^{2k}(1+2^{k-k_+}|x|)^{-5}\|(1-\Delta)^3H(s,\cdot)\|_{L^\infty(\R^2)}.
\end{equation}
Indeed, note that
\begin{equation}\label{app:dpw4}
\begin{split}
P_kR_j(1-\Delta)^3H(s,x)&=\int_{\R^2}K_{kj}(x-y)(1-\Delta)^3H(s,y)dy,\\
K_{kj}(x)&:=\frac{i}{(2\pi)^2}\int_{\R^2}e^{ix\cdot\xi}\frac{\xi_j\psi_k(\xi)}{|\xi|}d\xi,\\
|K_{kj}(x)|&\ls2^{2k}(1+2^k|x|)^{-5}.
\end{split}
\end{equation}
For $|x|\ge2R$, the assumption of $\supp(1-\Delta)^3H(s,y)\subset\{|y|\le R\}$ ensures $|x-y|\ge|x|/2$.
Thus, it follows from \eqref{app:dpw4} that
\begin{equation*}
\begin{split}
|P_kR_j(1-\Delta)^3H(s,x)|&\ls2^{2k}(1+2^k|x|)^{-5}\|(1-\Delta)^3H(s,\cdot)\|_{L^\infty(\R^2)}\\
&\ls2^{2k}(1+2^{k-k_+}|x|)^{-5}\|(1-\Delta)^3H(s,\cdot)\|_{L^\infty(\R^2)}.
\end{split}
\end{equation*}
For $|x|\le2R$, due to $2^{k-k_+}|x|\ls1$, then \eqref{app:dpw3} can be easily obtained.
Therefore, \eqref{app:dpw3} is proved.

In the following, it suffices to deal with the estimates of $\p_jv$ given by \eqref{app:dpw2} since
the estimate of $\p_tv$ in \eqref{app:dpw1} can be treated similarly.
Analogously to \eqref{app:pw3}, we can arrive at
\begin{equation}\label{app:dpw5}
|K^{j\pm}_k(t,x)|\ls2^{2k-6k_+}(1+2^k|x|)^{-1/2}(1+2^k|t-|x||)^{-2}.
\end{equation}
Substituting \eqref{app:dpw3} and \eqref{app:dpw5} into \eqref{app:dpw2} yields
\begin{equation}\label{app:dpw6}
\begin{split}
&|\p_jv|\ls\sup_{(s,y)\in[0,t]\times\R^2}(1+s)|(1-\Delta)^3H(s,y)|\sum_{k\in\Z}2^{4k-6k_+}\\
&\times\int_0^t\int_{\R^2}\frac{(1+s)^{-1}(1+2^{k-k_+}|y|)^{-5}dyds}{(1+2^k|x-y|)^{1/2}(1+2^k|t-s-|x-y||)^{2}}.
\end{split}
\end{equation}
It is easy to check that
\begin{equation*}
(1+2^k|x-y|)^{-1/2}(1+2^{k-k_+}|y|)^{-1/2}\ls2^{(k_+-k)/2}\w{x}^{-1/2}.
\end{equation*}
This, together with \eqref{app:dpw6}, leads to
\begin{equation}\label{app:dpw7}
\begin{split}
|\p_jv|&\ls\w{x}^{-1/2}\sup_{(s,y)\in[0,t]\times\R^2}(1+s)|(1-\Delta)^3H(s,y)|\sum_{k\in\Z}2^{(7k-11k_+)/2}J^k,\\
J^k&:=\int_{\R^2}\int_0^t\frac{dsdy}{(1+2^k|t-s-|x-y||)^2(1+s)(1+2^{k-k_+}|y|)^4}.
\end{split}
\end{equation}
Analogously to the proof of Lemma \ref{lem:impr:pw}, the integral $J^k$ can be separated into the following two parts
\begin{equation}\label{app:dpw8}
\begin{split}
J^k&=J^k_1+J^k_2,\\
J^k_1&:=\int_{|x-y|\ge t}\int_0^t\frac{dsdy}{(1+2^k(|x-y|+s-t))^2(1+s)(1+2^{k-k_+}|y|)^4},\\
J^k_2&:=\int_{|x-y|\le t}\int_0^t\frac{dsdy}{(1+2^k|t-s-|x-y||)^2(1+s)(1+2^{k-k_+}|y|)^4}.
\end{split}
\end{equation}
The integral $J^k_1$ is further divided into such two parts
\begin{equation}\label{app:dpw9}
\begin{split}
J^k_1&=J^k_{11}+J^k_{12},\\
J^k_{11}&:=\int_{\substack{|x-y|\ge t,\\|y|\ge|t-|x||/2}}\int_0^t\frac{dsdy}{(1+2^k(|x-y|+s-t))^2(1+s)(1+2^{k-k_+}|y|)^4},\\
J^k_{12}&:=\int_{\substack{|x-y|\ge t,\\|y|\le|t-|x||/2}}\int_0^t\frac{dsdy}{(1+2^k(|x-y|+s-t))^2(1+s)(1+2^{k-k_+}|y|)^4}.
\end{split}
\end{equation}
For the integral $J^k_{11}$, there hold $(1+2^{k-k_+}|y|)^{-1}\ls2^{k_+-k}\w{t-|x|}^{-1}$ and
\begin{equation}\label{app:dpw10}
\begin{split}
J^k_{11}&\ls2^{k_+-k}\w{t-|x|}^{-1}\int_0^t\int_{\R^2}\frac{dsdy}{(1+2^ks)^{1/4}(1+s)(1+2^{k-k_+}|y|)^3}\\
&\ls2^{5(k_+-k)/4}\w{t-|x|}^{-1}\int_0^t\int_{\R^2}\frac{dsdy}{(1+s)^{5/4}(1+2^{k-k_+}|y|)^3}\\
&\ls2^{13(k_+-k)/4}\w{t-|x|}^{-1}.
\end{split}
\end{equation}
For the integral $J^k_{12}$, when $|x|<t$, one can find that $0<t-|x|\le|x-y|-|x|\le|y|\le(t-|x|)/2$ holds,
which yields $J^k_{12}=0$.
Therefore, only the case of $|x|\ge t$ is considered. At this time, there hold $|x-y|+s-t\ge|x|-|y|-t\ge(|x|-t)/2$ and
\begin{equation}\label{app:dpw11}
\begin{split}
J^k_{12}&\ls2^{k_+-k}\w{t-|x|}^{-1}\int_{|x-y|\ge t}\int_0^t\frac{(1+s)^{-1}dsdy}{(1+2^k(|x-y|+s-t))^{1/4}(1+2^{k-k_+}|y|)^4}\\
&\ls2^{5(k_+-k)/4}\w{t-|x|}^{-1}\int_{\R^2}\int_0^t\frac{dsdy}{(1+s)^{5/4}(1+2^{k-k_+}|y|)^4}\\
&\ls2^{13(k_+-k)/4}\w{t-|x|}^{-1}.
\end{split}
\end{equation}
Substituting \eqref{app:dpw10} and \eqref{app:dpw11} into \eqref{app:dpw9} derives
\begin{equation}\label{app:dpw12}
J^k_1\ls2^{13(k_+-k)/4}\w{t-|x|}^{-1}.
\end{equation}
Next, we focus on the estimate of $J^k_2$, which can be separated into the following four parts
\begin{equation}\label{app:dpw13}
\begin{split}
J^k_2&=J^k_{21}+J^k_{22}+J^k_{23}+J^k_{24},\\
J^k_{21}&:=\int_{\substack{|x-y|\le t,\\|y|\ge|t-|x||/2}}\int_0^{t-|x-y|}\frac{dsdy}{(1+2^k(t-s-|x-y|))^2(1+s)(1+2^{k-k_+}|y|)^4},\\
J^k_{22}&:=\int_{\substack{|x-y|\le t,\\|y|\ge|t-|x||/2}}\int_{t-|x-y|}^t\frac{dsdy}{(1+2^k(|x-y|+s-t))^2(1+s)(1+2^{k-k_+}|y|)^4},\\
J^k_{23}&:=\int_{\substack{|x-y|\le t,\\|y|\le|t-|x||/2}}\int_0^{(t-|x-y|)/2}\frac{dsdy}{(1+2^k(t-s-|x-y|))^2(1+s)(1+2^{k-k_+}|y|)^4},\\
J^k_{24}&:=\int_{\substack{|x-y|\le t,\\|y|\le|t-|x||/2}}\int_{(t-|x-y|)/2}^t\frac{dsdy}{(1+2^k|t-s-|x-y||)^2(1+s)(1+2^{k-k_+}|y|)^4}.
\end{split}
\end{equation}
Analogously to the estimate \eqref{app:dpw10} of $J^k_{11}$, we have
\begin{equation}\label{app:dpw14}
\begin{split}
J^k_{21}&\ls2^{k_+-k}\w{t-|x|}^{-1}\int_{\substack{|x-y|\le t,\\|y|\ge|t-|x||/2}}\int_0^{t-|x-y|}\frac{dsdy}{(1+s)(1+2^{k-k_+}|y|)^3}\\
&\ls2^{5(k_+-k)/4}\w{t-|x|}^{-1}\int_{|y|\ge|t-|x||/2}\frac{\ln(2+|y|)dy}{(1+|y|)^{1/4}(1+2^{k-k_+}|y|)^{5/2}}\\
&\ls2^{5(k_+-k)/4}\w{t-|x|}^{-1}\int_{\R^2}\frac{dy}{(1+2^{k-k_+}|y|)^{5/2}}\\
&\ls2^{13(k_+-k)/4}\w{t-|x|}^{-1}
\end{split}
\end{equation}
and
\begin{equation}\label{app:dpw15}
\begin{split}
&J^k_{22}\ls2^{k_+-k}\w{t-|x|}^{-1}\int_{|x-y|\le t}\int_{t-|x-y|}^t
\frac{(1+2^k(|x-y|+s-t))^{-1/4}dsdy}{(1+|x-y|+s-t)(1+2^{k-k_+}|y|)^{3}}\\
&\ls2^{5(k_+-k)/4}\w{t-|x|}^{-1}\int_{|x-y|\le t}\int_{t-|x-y|}^t\frac{dsdy}{(1+|x-y|+s-t)^{5/4}(1+2^{k-k_+}|y|)^3}\\
&\ls2^{5(k_+-k)/4}\w{t-|x|}^{-1}\int_{\R^2}\frac{dy}{(1+2^{k-k_+}|y|)^3}\\
&\ls2^{13(k_+-k)/4}\w{t-|x|}^{-1}.
\end{split}
\end{equation}
For $J^k_{23}$ and $J^k_{24}$, when $t<|x|$, due to $0<|x|-t\le|x|-|x-y|\le|y|\le(|x|-t)/2$,
then $J^k_{23}=J^k_{24}=0$.
When $t\ge|x|$, one has $t-|x-y|\ge t-|x|-|y|\ge(t-|x|)/2$.
From the expression of $J^k_{23}$, we know $t-s-|x-y|\ge(t-|x-y|)/2\ge s/2$ and
\begin{equation}\label{app:dpw16}
\begin{split}
J^k_{23}&\ls2^{k_+-k}\w{t-|x|}^{-1}\int_{\R^2}\int_0^{(t-|x-y|)/2}\frac{dsdy}{(1+2^ks)^{1/4}(1+s)(1+2^{k-k_+}|y|)^4}\\
&\ls2^{5(k_+-k)/4}\w{t-|x|}^{-1}\int_{\R^2}\int_0^{(t-|x-y|)/2}\frac{dsdy}{(1+s)^{5/4}(1+2^{k-k_+}|y|)^4}\\
&\ls2^{13(k_+-k)/4}\w{t-|x|}^{-1}.
\end{split}
\end{equation}
For the integral $J^k_{24}$, it is easy to obtain
\begin{equation}\label{app:dpw17}
\begin{split}
&\w{t-|x|}J^k_{24}\ls\int_{\R^2}\int_{(t-|x-y|)/2}^t\frac{dsdy}{(1+2^k|t-s-|x-y|)^{5/4}(1+2^{k-k_+}|y|)^4}\\
&\ls2^{5(k_+-k)/4}\int_{\R^2}\int_{(t-|x-y|)/2}^t\frac{dsdy}{(1+|t-|x-y|-s|)^{5/4}(1+2^{k-k_+}|y|)^4}\\
&\ls2^{13(k_+-k)/4}.
\end{split}
\end{equation}
Collecting \eqref{app:dpw7}, \eqref{app:dpw8} and \eqref{app:dpw12}-\eqref{app:dpw17} yields
\begin{equation*}
\begin{split}
|\p_jv|&\ls\w{x}^{-1/2}\w{t-|x|}^{-1}\sup_{(s,y)\in[0,t]\times\R^2}(1+s)|(1-\Delta)^3H(s,y)|\sum_{k\in\Z}2^{k/4-9k_+/4}\\
&\ls\w{x}^{-1/2}\w{t-|x|}^{-1}\sup_{(s,y)\in[0,t]\times\R^2}(1+s)|(1-\Delta)^3H(s,y)|.
\end{split}
\end{equation*}
This competes the proof of Lemma \ref{lem:impr:dpw}.
\end{proof}

\section{Pointwise estimates of the initial boundary value problem}\label{sect4}

\begin{lemma}({\bf Pointwise estimates in exterior domain})\label{lem:ibvp:pw}
Suppose that the obstacle $\cO$ is star-shaped, $\cK=\R^2\setminus\cO$ and $w(t,x)$ is the solution of the IBVP
\begin{equation*}
\left\{
\begin{aligned}
&\Box w=F(t,x),\qquad(t,x)\in[0,\infty)\times\cK,\\
&w|_{\p\cK}=0,\\
&(w,\p_tw)(0,x)=(w_0,w_1)(x),\quad x\in\cK,
\end{aligned}
\right.
\end{equation*}
where $(w_0,w_1)$ admits a compact support and $\supp_x F(t,x)\subset\{x: |x|\le t+M_0\}$.
Then we have that for any $\mu,\nu\in(0,1/2)$,
\begin{equation}\label{InLW:dpw}
\begin{split}
&\w{x}^{1/2}\w{t-|x|}^{1/2-\mu}|\p w|
\ls\|(w_0,w_1)\|_{H^4(\cK)}+\sum_{|a|\le3}\sup_{s\in[0,t]}\w{s}^{1/2}\|\p^aF(s)\|_{L^2(\cK_4)}\\
&+\sum_{|a|\le3}\sup_{y\in\cK}|\p^aF(0,y)|
+\sum_{|a|\le4}\sup_{(s,y)\in[0,t]\times(\overline{\R^2\setminus\cK_2})}\w{y}^{1/2}\cW_{1+\mu+\nu,1}(s,y)|Z^aF(s,y)|
\end{split}
\end{equation}
and
\begin{equation}\label{InLW:pw}
\begin{split}
&\frac{\w{t+|x|}^{1/2}\w{t-|x|}^{\mu}}{\ln^2(2+t+|x|)}|w|
\ls\|(w_0,w_1)\|_{H^5(\cK)}+\sum_{|a|\le4}\sup_{s\in[0,t]}\w{s}^{1/2+\mu}\|\p^aF(s)\|_{L^2(\cK_4)}\\
&+\sum_{|a|\le4}\sup_{y\in\cK}|\p^aF(0,y)|
+\sum_{|a|\le5}\sup_{(s,y)\in[0,t]\times(\overline{\R^2\setminus\cK_2})}\w{y}^{1/2}\cW_{1+\mu,1+\nu}(s,y)|\p^aF(s,y)|.
\end{split}
\end{equation}
In addition, let $w^{div}(t,x)$ solve
\begin{equation}\label{ibvp:div}
\left\{
\begin{aligned}
&\Box w^{div}=F(t,x)=G+\p_{\theta}G^{\theta}+\sum_{\alpha=0}^2\p_{\alpha}G^{\alpha},\qquad (t,x)\in[0,+\infty)\times\cK,\\
&w^{div}|_{\p\cK}=0,\\
&(w^{div},\p_tw^{div})(0,x)=(w_0,w_1)(x),\qquad\qquad\quad x\in\cK,
\end{aligned}
\right.
\end{equation}
where $(w_0,w_1)$ has a compact support and $\supp_x(G,G^{\theta},G^{\alpha})\subset\{x: |x|\le t+M_0\}$, then it holds that
\begin{equation}\label{InLW:dpw:div}
\begin{split}
&\w{x}^{1/2}\w{t-|x|}|\p w^{div}|
\ls\|(w_0,w_1)\|_{H^9(\cK)}+\sum_{|a|\le8}\sup_{y\in\cK}|\p^aF(0,y)|+\sup_{y\in\cK}|G^0(0,y)|\\
&+\sum_{|a|\le8}\sup_{s\in[0,t]}\w{s}\|\p^aF(s)\|_{L^2(\cK_4)}+\sum_{i=1,2}\sup_{s\in[0,t]}\w{s}^{3/2+\mu}\|G^i(s)\|_{L^\infty(\cK_3)}\\
&+\sup_{(s,y)\in[0,t]\times(\overline{\R^2\setminus\cK_2})}\w{y}^{1/2}\cW_{3/2+\mu,1}(s,y)|G(s,y)|\\
&+\sum_{|a|\le9}\sum_{\alpha=0}^2\sup_{(s,y)\in[0,t]\times(\overline{\R^2\setminus\cK_2})}\w{y}^{1/2}\cW_{1+\mu+\nu,1}(s,y)
|Z^a(G,Z^{\le1}G^\theta,Z^{\le1}G^\alpha)(s,y)|.
\end{split}
\end{equation}
\end{lemma}
\begin{proof}
At first, we prove \eqref{InLW:dpw}.
Let $w_1^b$ and $w_2^b$ be the solutions of
\begin{equation}\label{dpw:pf1}
\left\{
\begin{aligned}
&\Box w_1^b:=(1-\chi_{[2,3]}(x))F(t,x),\\
&\Box w_2^b:=\chi_{[2,3]}(x)F(t,x),\\
&w_1^b|_{\p\cK}=w_2^b|_{\p\cK}=0,\\
&(w_1^b,\p_tw_1^b)(0,x)=(w_0,w_1)(x),\quad (w_2^b,\p_tw_2^b)(0,x)=(0,0).
\end{aligned}
\right.
\end{equation}
Then it follows from the uniqueness of smooth solution to the IBVP that $w^{div}=w_1^b+w_2^b$.
For the estimate of $w_1^b$ in $\cK_R$ with $R>4$, by $\supp_{x}\Box w_1^b\subset\{x: |x|\le3\}$ and \eqref{loc:decay} with $m=3$,
one has that
\begin{equation}\label{dpw:pf2}
\begin{split}
\w{t}^{\rho}\|\p w_1^b\|_{L^\infty(\cK_R)}&\ls\w{t}^{\rho}\|\p w_1^b\|_{H^2(\cK_{R+1})}\\
&\ls\|(w_0,w_1)\|_{H^3(\cK)}+\sum_{|b|\le2}\sup_{s\in[0,t]}\w{s}^{\rho}\|\p^b[(1-\chi_{[2,3]})F(s)]\|_{L^2(\cK)}\\
&\ls\|(w_0,w_1)\|_{H^3(\cK)}+\sum_{|b|\le2}\sup_{s\in[0,t]}\w{s}^{\rho}\|\p^bF(s)\|_{L^2(\cK_3)},
\end{split}
\end{equation}
where $\rho\in(0,1]$.

We now treat $w_1^b$ in the region $|x|\ge R$.
To this end, let $w_1^c=\chi_{[3,4]}w_1^b$ be a function on $\R^2$.
Then $w_1^c$ solves
\begin{equation*}
\left\{
\begin{aligned}
&\Box w_1^c=-[\Delta,\chi_{[3,4]}]w_1^b,\\
&(w_1^c,\p_tw_1^c)(0,x)=(\chi_{[3,4]}w_0,\chi_{[3,4]}w_1).
\end{aligned}
\right.
\end{equation*}
It follows from $\supp_x[\Delta,\chi_{[3,4]}]w_1^b\subset\{x: |x|\le4\}$, \eqref{dpw:ivp2} with $H=0$ and \eqref{dpw:loc} that
\begin{equation}\label{dpw:pf3}
\begin{split}
&\w{x}^{1/2}\w{t-|x|}^{\rho-\delta}|\p w_1^c|\\
&\quad\ls\|(w_0,w_1)\|_{W^{2,\infty}(\cK)}+\sum_{|a|\le1}\sup_{(s,y)\in[0,t]\times\R^2}\w{s}^{\rho}|\p^a[\Delta,\chi_{[3,4]}]w_1^b(s,y)|\\
&\quad\ls\|(w_0,w_1)\|_{H^4(\cK)}+\sum_{|a|\le1}\sup_{s\in[0,t]}\w{s}^{\rho}\|\p^a[\Delta,\chi_{[3,4]}]w_1^b(s,\cdot)\|_{H^2(\R^2)}\\
&\quad\ls\|(w_0,w_1)\|_{H^4(\cK)}+\sum_{|a|\le3}\sup_{s\in[0,t]}\w{s}^{\rho}\|\p^aF(s)\|_{L^2(\cK_4)},
\end{split}
\end{equation}
where $0<\delta<\rho\le1/2$ and the last two inequalities have used the Sobolev embedding and \eqref{loc:decay} with $m=4$.

Since $\p w_1^b=\p w_1^c$ holds in the region $|x|\ge4$, then collecting \eqref{dpw:pf2} with $R>4$ and \eqref{dpw:pf3} yields that for $x\in\cK$,
\begin{equation}\label{dpw:pf4}
\w{x}^{1/2}\w{t-|x|}^{\rho-\delta}|\p w_1^b|
\ls\|(w_0,w_1)\|_{H^4(\cK)}+\sum_{|a|\le3}\sup_{s\in[0,t]}\w{s}^{\rho}\|\p^aF(s)\|_{L^2(\cK_4)}.
\end{equation}
Next, we deal with $w_2^b$.
Let $w_2^c$ be the solution of the Cauchy problem
\begin{equation}\label{dpw:pf5}
\Box w_2^c=\left\{
\begin{aligned}
&\chi_{[2,3]}(x)F(t,x),\qquad&& x\in\cK=\R^2\setminus\cO,\\
&0,&& x\in\overline{\cO}
\end{aligned}
\right.
\end{equation}
with $(w_2^c, \p_tw_2^c)=(0,0)$.
Set $v_2^b=\chi_{[1,2]}(x)w_2^c$ on $\cK$ and then $v_2^b$ is the solution of the IBVP
\begin{equation}\label{dpw:pf6}
\left\{
\begin{aligned}
&\Box v_2^b=\chi_{[1,2]}\Box w_2^c-[\Delta,\chi_{[1,2]}]w_2^c=\chi_{[2,3]}(x)F(t,x)-[\Delta,\chi_{[1,2]}]w_2^c,\\
&v_2^b|_{\p\cK}=0,\quad (v_2^b,\p_tv_2^b)(0,x)=(0,0).
\end{aligned}
\right.
\end{equation}
In addition, let $\tilde v_2^b$ be the solution of the IBVP
\begin{equation}\label{dpw:pf7}
\Box\tilde v_2^b=[\Delta,\chi_{[1,2]}]w_2^c,
\quad\tilde v_2^b|_{\p\cK}=0,\quad (\tilde v_2^b,\p_t\tilde v_2^b)(0,x)=(0,0).
\end{equation}
Therefore, it follows from \eqref{dpw:pf1}, \eqref{dpw:pf6} and \eqref{dpw:pf7} that $w_2^b=\chi_{[1,2]}(x)w_2^c+\tilde v_2^b$.

Due to $\supp_x\Box\tilde v_2^b\subset\{x: |x|\le2\}$, similarly to the treatment of \eqref{dpw:pf4} for $w_1^b$,
we have that
\begin{equation}\label{dpw:pf8}
\begin{split}
\w{x}^{1/2}\w{t-|x|}^{\rho-\delta}|\p \tilde v_2^b|
&\ls\sum_{|a|\le3}\sup_{s\in[0,t]}\w{s}^{\rho}\|\p^a[\Delta,\chi_{[1,2]}]w_2^c(s)\|_{L^2(\cK_4)}\\
&\ls\sum_{|a|\le4}\sup_{s\in[0,t]}\w{s}^{\rho}\|\p^a w_2^c(s)\|_{L^\infty(|y|\le2)}.
\end{split}
\end{equation}
To estimate $w_2^c$,
applying \eqref{pw:ivp1} to \eqref{dpw:pf5} derives
\begin{equation}\label{dpw:pf9}
\begin{split}
&\sum_{|a|\le4}\w{t}^{1/2+\mu}\|\p^a w_2^c(t)\|_{L^\infty(\cK_2)}\\
&\ls\sum_{|a|\le3}\sup_{y\in\cK}|\p^aF(0,y)|+\sum_{|a|\le4}\sup_{(s,y)\in[0,t]\times\R^2}\w{y}^{1/2}\cW_{1+\mu,1+\nu}(s,y)|\p^a(\chi_{[2,3]}F(s,y))|\\
&\ls\sum_{|a|\le3}\sup_{y\in\cK}|\p^aF(0,y)|+\sum_{|a|\le4}\sup_{(s,y)\in[0,t]
\times(\overline{\R^2\setminus\cK_2})}\w{y}^{1/2}\cW_{1+\mu+\nu,1}(s,y)|\p^aF(s,y)|,
\end{split}
\end{equation}
where the term $\ds\sup_{y\in\cK}|\p^aF(0,y)|$ in \eqref{dpw:pf9} comes from the initial data of $\p^aw_2^c$.
On the other hand, it follows from \eqref{dpw:ivp2} that
\begin{equation}\label{dpw:pf10}
\begin{split}
\w{x}^{1/2}\w{t-|x|}^{1+\mu}|\p w_2^c|
&\ls\sum_{|a|\le1}\sup_{(s,y)\in[0,t]\times\R^2}\w{y}^{1/2}\cW_{1+\mu+\nu,1}(s,y)|Z^a(\chi_{[2,3]}F)(s,y)|\\
&\ls\sum_{|a|\le1}\sup_{(s,y)\in[0,t]\times(\overline{\R^2\setminus\cK_2})}\w{y}^{1/2}\cW_{1+\mu+\nu,1}(s,y)|Z^aF(s,y)|.
\end{split}
\end{equation}
Collecting \eqref{dpw:pf4}, \eqref{dpw:pf8}-\eqref{dpw:pf10} with the fact of
\begin{equation}\label{dpw:pf11}
\p w_2^b=\chi_{[1,2]}(x)\p w_2^c+(\p\chi_{[1,2]}(x))w_2^c+\p\tilde v_2^b
\end{equation}
yields \eqref{InLW:dpw}.

In addition, the proof of \eqref{InLW:pw} is similar to that of \eqref{InLW:dpw} with \eqref{pw:ivp1} and \eqref{impr:pw:Cauchy}
instead of \eqref{dpw:ivp2} and \eqref{dpw:loc}, respectively.

Finally, we turn to the proof of \eqref{InLW:dpw:div}.
Analogously to the proofs of \eqref{dpw:pf4} and \eqref{dpw:pf8} with \eqref{impr:pw:Cauchy} instead of \eqref{dpw:loc}, one can see that
\begin{equation}\label{dpw:pf12}
\begin{split}
\w{x}^{1/2}\w{t-|x|}|\p w_1^b|
&\ls\|(w_0,w_1)\|_{H^9(\cK)}+\sum_{|a|\le8}\sup_{s\in[0,t]}\w{s}\|\p^aF(s)\|_{L^2(\cK_4)},\\
\w{x}^{1/2}\w{t-|x|}|\p\tilde v_2^b|
&\ls\sum_{|a|\le9}\sup_{s\in[0,t]}\w{s}\|\p^aw_2^c(s,y)\|_{L^\infty(|y|\le2)}.
\end{split}
\end{equation}
On the other hand, by using \eqref{dpw:ivp2} to \eqref{dpw:pf5}, one obtains that for integer $l$,
\begin{equation}\label{dpw:pf13}
\begin{split}
&\w{x}^{1/2}\w{t-|x|}^{1+\mu}\sum_{|a|\le l}|\p\p^a w_2^c|
\ls\sum_{|a|\le l}\sup_{y\in\cK}|\p^aF(0,y)|\\
&\quad+\sum_{|a|\le1+l}\sup_{(s,y)\in[0,t]\times(\overline{\R^2\setminus\cK_2})}\w{y}^{1/2}\cW_{1+\mu+\nu,1}(s,y)|Z^a(G,ZG^{\alpha},ZG^{\theta})(s,y)|.
\end{split}
\end{equation}
Next, we focus on the estimate of $w_2^c(t,x)$ itself in the region $|x|\le2$.
Note that $w_2^c(t,x)$ is the solution to the inhomogeneous Cauchy problem
\begin{equation}\label{dpw:pf14}
\Box w_2^c=\p_{\theta}(\chi_{[2,3]}(x)G^{\theta})+\sum_{\alpha=0}^2\p_{\alpha}(\chi_{[2,3]}(x)G^{\alpha})+\chi_{[2,3]}(x)G
-\sum_{i=1,2}G^i\p_i(\chi_{[2,3]}(x)).
\end{equation}
Then it follows from $[\Box,\p_{\theta}]=0$ that $w_2^c=\p_{\theta}w^{\theta}+\ds\sum_{\alpha=0}^2\p_{\alpha}w^{\alpha}+w^r$,
where $w^{\theta}$, $w^{\alpha}$, $w^r$ are the solutions of
\begin{equation}\label{dpw:pf15}
\begin{split}
&\Box w^{\Xi}=\chi_{[2,3]}(x)G^{\Xi},\qquad (w^{\Xi},\p_tw^{\Xi})(0,x)=(0,0),\qquad \Xi\in\{\theta,0,1,2\},\\
&\Box w^r=\chi_{[2,3]}(x)G-\sum_{i=1,2}G^i\p_i(\chi_{[2,3]}(x)),(w^r,\p_tw^r)(0,x)=(0,-\chi_{[2,3]}(x)G^0(0,x)),
\end{split}
\end{equation}
respectively.
Applying \eqref{pw:ivp1}, \eqref{pw:ivp2} and \eqref{dpw:ivp2} to $w^r$ and $w^{\Xi}$ with $\Xi\in\{\theta,0,1,2\}$ yields
\begin{equation}\label{dpw:pf16}
\begin{split}
&\w{t+|x|}^{1/2}\w{t-|x|}^{1/2-\mu}|w^r|\ls\sup_{(s,y)\in[0,t]\times(\overline{\R^2\setminus\cK_2})}\w{y}^{1/2}\cW_{3/2-\mu,1+\nu}(s,y)|G(s,y)|\\
&\qquad\qquad+\sup_{y\in\cK}|G^0(0,y)|+\sum_{i=1,2}\sup_{(s,y)\in[0,t]\times(\overline{\cK_3\setminus\cK_2})}\w{s}^{3/2-\mu}|G^i(s,y)|,\\
&\w{t+|x|}^{1/2}\w{t-|x|}^{1/2}|w^r|\ls\sup_{(s,y)\in[0,t]\times(\overline{\R^2\setminus\cK_2})}\w{y}^{1/2}\cW_{3/2+\mu,1}(s,y)|G(s,y)|\\
&\qquad\qquad+\sup_{y\in\cK}|G^0(0,y)|+\sum_{i=1,2}\sup_{(s,y)\in[0,t]\times(\overline{\cK_3\setminus\cK_2})}\w{s}^{3/2+\mu}|G^i(s,y)|,\\
&\w{x}^{1/2}\w{t-|x|}^{1+\mu}|\p w^{\Xi}|\ls\sup_{(s,y)\in[0,t]\times(\overline{\R^2\setminus\cK_2})}\w{y}^{1/2}\cW_{1+\mu+\nu,1}(s,y)|Z^{\le1}G^{\Xi}(s,y)|.
\end{split}
\end{equation}
Note that $|\p_{\theta}w^{\theta}|=|\Omega w^{\theta}|\ls|\p w^{\theta}|$ for $x\in\supp(\p\chi_{[1,2]})$ in \eqref{dpw:pf11}.
Then, collecting \eqref{dpw:pf11}-\eqref{dpw:pf16} derives \eqref{InLW:dpw:div}.
\end{proof}

\begin{remark}\label{rmk4-1}
The pointwise estimate \eqref{thm1:decay:c} (especially the appearance of the decay factor $\w{t-|x|}^{0.001-1/2}$)
is essential in the proof of Theorem \ref{thm1}, which will be derived by the estimate \eqref{InLW:pw}.
Meanwhile, \eqref{InLW:pw} is proved in terms of \eqref{impr:pw:Cauchy}.
It is pointed out that the author in \cite[Theorem 1.1]{Kubo15} has proved the following estimate
\begin{equation}\label{Kubo15cpaa}
\w{t+|x|}^{1/2}(\min\{\w{x},\w{t-|x|}\})^{\mu}|v|
\ls\sum_{|a|\le1}\sup_{(s,y)\in[0,t]\times\R^2}\w{s}^{1/2+\mu}|\p^aH(s,y)|,
\end{equation}
where $\mu\in(0,1/2)$.
If utilizing \eqref{Kubo15cpaa} instead of \eqref{impr:pw:Cauchy}, then the pointwise estimate \eqref{thm1:decay:c}
is correspondingly changed into
\begin{equation}\label{YHC-32}
\sum_{|a|\le N}|Z^au|\le C\ve\w{t+|x|}^{-1/2}(\min\{\w{x},\w{t-|x|}\})^{-\mu}.
\end{equation}
For $|x|\le t/2$, \eqref{YHC-32} gives the decay factor $\w{t}^{-1/2}\w{x}^{-\mu}$.
However, our pointwise estimate \eqref{thm1:decay:c} yields a better and important decay rate $\w{t}^{0.002-1}$.
\end{remark}

\begin{remark}\label{rmk4-2}
The precise estimate \eqref{InLW:dpw:div} is crucial in the establishment of \eqref{thm1:decay:a}.
In view of \eqref{dpw:pf16}, the estimate of the derivatives of solution are better than that of solution itself.
With the structure of the divergence form \eqref{ibvp:div}, comparing with the remaining part $G$, the weights
on the divergence parts $G^{\Xi}$, $\Xi=\theta,0,1,2$ are obviously weaker, see the third and fourth lines of \eqref{InLW:dpw:div}.
Thus, the divergence form \eqref{eqn:good:div} will play a key role in the proof of Theorem \ref{thm1}.
If one still applies \eqref{eqn:good} instead of \eqref{eqn:good:div},
by taking $G(t,x)=|x|^{-1}\Omega\p_{\mu}Z^bu\p_{\nu}Z^cu$ in the third line of \eqref{InLW:dpw:div} and
utilizing the second line of \eqref{Q12:null:structure} and the decay estimate \eqref{thm1:decay:a},
then  $|G(t,x)|\ls\w{x}^{-2}\w{t-|x|}^{-2}$ is obtained.
Due to $\mu>0$ in the third line of \eqref{InLW:dpw:div}, $\w{y}^{1/2}\cW_{3/2+\mu,1}(s,y)|G(s,y)|$
is unbounded in time near the light cone $|y|=s$.
On the other hand, for $\mu=0$ in the third line of \eqref{InLW:dpw:div},
applying \eqref{pw:crit} instead of \eqref{pw:ivp2} in the proof of \eqref{InLW:dpw:div} yields
\begin{equation}\label{YHC-33}
\begin{split}
&\w{x}^{1/2}\w{t-|x|}|\p w^{div}|\ls\cdots\cdots\\
&\quad+\ln(2+t)\sup_{(s,y)\in[0,t]\times(\overline{\R^2\setminus\cK_2})}\w{y}^{1/2}\cW_{3/2,1+\nu}(s,y)|G(s,y)|+\cdots,
\end{split}
\end{equation}
where $\nu>0$.
Due to \eqref{YHC-33}, one can only achieve
\begin{equation}\label{YHC-34}
\sum_{|a|\le N}|\p Z^au|\le C\ve\w{x}^{-1/2}\w{t-|x|}^{-1}\ln(2+t)
\end{equation}
rather than \eqref{thm1:decay:a}.
From \eqref{YHC-34}, one knows that it is far away to solve the global solution problem \eqref{QWE} as pointed on
page 320 of \cite{Kubo13} due to the appearance of the large factor $\ln(2+t)$.
\end{remark}

\begin{lemma}({\bf $L^{\infty}$ estimates near the boundary})\label{lem:ibvp:loc}
Suppose that the obstacle $\cO$ is star-shaped, $\cK=\R^2\setminus\cO$ and $w$ solves
\begin{equation*}
\left\{
\begin{aligned}
&\Box w=F(t,x),\qquad(t,x)\in(0,\infty)\times\cK,\\
&w|_{\p\cK}=0,\\
&(w,\p_tw)(0,x)=(w_0,w_1)(x),\quad x\in\cK,
\end{aligned}
\right.
\end{equation*}
where $(w_0,w_1)$ has a compact support and $\supp_x F(t,x)\subset\{x: |x|\le t+M_0\}$.
Then one has that for any $\mu,\nu\in(0,1/2)$ and $R>1$,
\begin{equation}\label{loc:dt:sharp}
\begin{split}
&\w{t}\|\p_tw\|_{L^\infty(\cK_R)}
\ls\|(w_0,w_1)\|_{H^3(\cK)}+\sum_{|a|\le2}\sup_{s\in[0,t]}\w{s}\|\p^aF(s)\|_{L^2(\cK_3)}\\
&\quad+\sum_{|a|\le2}\sup_{y\in\cK}|\p^aF(0,y)|+\sum_{|a|\le3}\sup_{(s,y)\in[0,t]\times\cK}\w{y}^{1/2}\cW_{1+\mu+\nu,1}(s,y)|Z^aF(s,y)|.
\end{split}
\end{equation}
In addition, let $w^{div}(t,x)$ be the solution of
\begin{equation*}
\left\{
\begin{aligned}
&\Box w^{div}=F(t,x)=G+\p_{\theta}G^{\theta}+\sum_{\alpha=0}^2\p_{\alpha}G^{\alpha},\quad(t,x)\in(0,\infty)\times\cK,\\
&w^{div}|_{\p\cK}=0,\\
&(w^{div},\p_tw^{div})(0,x)=(w_0,w_1)(x),\qquad\qquad x\in\cK,
\end{aligned}
\right.
\end{equation*}
where $(w_0,w_1)$ has a compact support and $\supp_x(G,G^{\alpha},G^{\theta})\subset\{x: |x|\le t+M_0\}$, then
\begin{equation}\label{loc:divform}
\begin{split}
&\w{t}\|w^{div}\|_{L^\infty(\cK_R)}
\ls\|(w_0,w_1)\|_{H^2(\cK)}+\sum_{|a|\le1}\sup_{y\in\cK}|\p^aF(0,y)|+\sup_{y\in\cK}|G^0(0,y)|\\
&\quad+\sum_{|a|\le1}\sup_{s\in[0,t]}\w{s}\|\p^aF(s)\|_{L^2(\cK_3)}+\sum_{i=1,2}\sup_{s\in[0,t]}\w{s}^{3/2+\mu}\|G^i(s)\|_{L^\infty(\cK_3)}\\
&\quad+\sup_{(s,y)\in[0,t]\times(\overline{\R^2\setminus\cK_2})}\w{y}^{1/2}\cW_{3/2+\mu,1}(s,y)|G(s,y)|\\
&\quad+\sum_{|a|\le2}\sup_{(s,y)\in[0,t]\times(\overline{\R^2\setminus\cK_2})}\w{y}^{1/2}\cW_{1+\mu+\nu,1}(s,y)
|Z^a(G,Z^{\le1}G^\theta,Z^{\le1}G^\alpha)(s,y)|.
\end{split}
\end{equation}

\end{lemma}
\begin{proof}
At first, we prove \eqref{loc:divform}.
Let $w_1^b$, $w_2^b$, $w_2^c$ and $\tilde v_2^b$ be defined by \eqref{dpw:pf1}, \eqref{dpw:pf5} and \eqref{dpw:pf7} in Lemma \ref{lem:ibvp:pw}, respectively.
Analogously to \eqref{dpw:pf2}, we can arrive at
\begin{equation}\label{loc:divform:pf1}
\begin{split}
\w{t}\|w_1^b\|_{L^\infty(\cK_R)}&\ls\w{t}\|w_1^b\|_{H^2(\cK_{R+1})}\\
&\ls\|(w_0,w_1)\|_{H^2(\cK)}+\sum_{|b|\le1}\sup_{s\in[0,t]}\w{s}\|\p^b[(1-\chi_{[2,3]})F(s)]\|_{L^2(\cK)}\\
&\ls\|(w_0,w_1)\|_{H^2(\cK)}+\sum_{|b|\le1}\sup_{s\in[0,t]}\w{s}\|\p^bF(s)\|_{L^2(\cK_3)}.
\end{split}
\end{equation}
Next, we treat $w_2^b=\chi_{[1,2]}(x)w_2^c+\tilde v_2^b$ in the region $|x|\le R$.
Similarly to \eqref{dpw:pf2} and \eqref{loc:divform:pf1}, applying \eqref{loc:decay} to \eqref{dpw:pf7} with $m=2$ yields
\begin{equation}\label{loc:divform:pf2}
\w{t}\|\tilde v_2^b\|_{L^\infty(\cK_R)}\ls\sum_{|a|\le2}\sup_{s\in[0,t]}\w{s}\|\p^aw_2^c(s)\|_{L^2(\cK_2)}.
\end{equation}

Note that $\ds w_2^c=\p_{\theta}w^{\theta}+\sum_{\alpha=0}^2\p_{\alpha}w^{\alpha}+w^r$ with $w^{\theta}$, $w^{\alpha}$, $w^r$  defined by \eqref{dpw:pf15}.
Combining \eqref{dpw:pf13} and \eqref{dpw:pf16} with \eqref{loc:divform:pf1} and \eqref{loc:divform:pf2} yields \eqref{loc:divform}.

We now give the proof  on the estimate \eqref{loc:dt:sharp}.
It follows from \eqref{dpw:pf2} with $\p=\p_t$ that
\begin{equation}\label{loc:dt:pf1}
\w{t}\|\p_tw_1^b\|_{L^\infty(\cK_R)}
\ls\|(w_0,w_1)\|_{H^3(\cK)}+\sum_{|b|\le2}\sup_{s\in[0,t]}\w{s}\|\p^bF(s)\|_{L^2(\cK_3)}.
\end{equation}
Note that $\p_tw_2^b=\chi_{[1,2]}(x)\p_tw_2^c+\p_t\tilde v_2^b$, and $\phi=\p_t\tilde v_2^b$ satisfies
\begin{equation}\label{loc:dt:pf2}
\Box\phi=[\Delta,\chi_{[1,2]}]\p_tw_2^c, \quad \phi|_{\p\cK}=0,\quad (\phi,\p_t\phi)_{t=0}=(0,0).
\end{equation}
Similarly to \eqref{loc:divform:pf1}, we can achieve
\begin{equation}\label{loc:dt:pf3}
\w{t}\|\p_t\tilde v_2^b\|_{L^\infty(\cK_R)}\ls\w{t}\|\phi\|_{H^2(\cK_{R+1})}
\ls\sum_{|a|\le2}\sup_{s\in[0,t]}\w{s}\|\p^a\p_tw_2^c(s)\|_{L^\infty(\cK_2)}.
\end{equation}
On the other hand, analogously to \eqref{dpw:pf13}, one has
\begin{equation}\label{loc:dt:pf4}
\begin{split}
&\w{x}^{1/2}\w{t-|x|}^{1+\mu}\sum_{|a|\le2}|\p_t\p^aw_2^c|
\ls\sum_{|a|\le2}\sup_{y\in\cK}|\p^aF(0,y)|\\
&\qquad+\sum_{|a|\le3}\sup_{(s,y)\in[0,t]\times(\overline{\R^2\setminus\cK_2})}\w{y}^{1/2}\cW_{1+\mu+\nu,1}(s,y)|Z^aF(s,y)|.
\end{split}
\end{equation}
Collecting \eqref{loc:dt:pf1}-\eqref{loc:dt:pf4} yields \eqref{loc:dt:sharp}.
\end{proof}

\section{Energy estimates}\label{sect5}

\subsection{Bootstrap assumptions}

We make the following bootstrap assumptions
\begin{align}
&\sum_{|a|\le N+1}|\p Z^au|\le\ve_1\w{x}^{-1/2}\w{t-|x|}^{-1},\label{BA1}\\
&\sum_{|a|\le N+1}|\bar\p Z^au|\le\ve_1\w{x}^{-1/2}\w{t+|x|}^{\ve_2-1},\label{BA2}\\
&\sum_{|a|\le N}|Z^au|\le\ve_1\w{t+|x|}^{\ve_2-1/2}\w{t-|x|}^{\ve_2-1/2},\label{BA3}
\end{align}
where $\ve_1\in(\ve,1)$ will be determined later and $\ve_2=10^{-3}$.

Note that due to $\supp_x(u_0,u_1)\subset\{x: |x|\le M_0\}$,
the solution $u$ of problem \eqref{QWE} is supported on the space variable $x$ in $\{x\in\cK:|x|\le t+M_0\}$.

\subsection{Energy estimates}

\begin{lemma}
Under the assumptions of Theorem \ref{thm1}, let $u$ be the solution of \eqref{QWE} and suppose that \eqref{BA1} and \eqref{BA2} hold.
Then there is a positive constant $C_0$ such that
\begin{equation}\label{energy:time}
\sum_{j\le2N}\|\p\p_t^ju\|_{L^2(\cK)}\ls\ve(1+t)^{C_0\ve_1},
\end{equation}
where $\ve_1>0$ is small enough. Especially,
\begin{equation}\label{energy:time'}
\sum_{j\le2N}\|\p\p_t^ju\|_{L^2(\cK)}\ls\ve(1+t)^{\ve_2}.
\end{equation}
\end{lemma}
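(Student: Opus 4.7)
My plan is to use the standard quasilinear energy method with Alinhac's ghost weight, exploiting the fact that $\p_t$ commutes with both $\Box$ and the time-invariant Dirichlet condition, so that $v_j:=\p_t^j u$ automatically satisfies $v_j|_{\p\cK}=0$ for every $j\le 2N$ and no boundary flux obstruction arises from the obstacle. I would first rewrite \eqref{QWE} in quasilinear form $g^{\alpha\beta}(u,\p u)\p^2_{\alpha\beta}u=0$ with
\[
g^{\alpha\beta}(u,\p u):=\eta^{\alpha\beta}-\sum_{\gamma=0}^2 Q^{\alpha\beta\gamma}\p_\gamma u-\sum_{\gamma,\delta=0}^2 Q^{\alpha\beta\gamma\delta}\p_\gamma u\,\p_\delta u,
\]
and apply $\p_t^j$ to obtain the linear wave equation $g^{\alpha\beta}\p^2_{\alpha\beta}v_j=R_j$ with $R_j:=-[\p_t^j,g^{\alpha\beta}]\p^2_{\alpha\beta}u$, a finite sum of products of pure time derivatives of $u$.

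Next I would introduce the bounded ghost weight $q(t,x):=-\int_{-\infty}^{|x|-t}\w{s}^{-1-2\ve_2}\,ds$ and define
\[
E_j(t):=\tfrac12\int_\cK e^{q}\bigl(-g^{00}(\p_t v_j)^2-2g^{0i}\p_t v_j\,\p_i v_j+g^{ij}\p_i v_j\,\p_j v_j\bigr)\,dx.
\]
The bootstrap \eqref{BA1} gives $\|g-\eta\|_{L^\infty(\cK)}\ls\ve_1\ll 1$, so $E_j(t)\approx\|\p v_j(t)\|_{L^2(\cK)}^2$. The standard multiplier computation with $\p_t v_j$, integrating by parts and using that $\p_t v_j$ vanishes on $\p\cK$ to kill all boundary terms, yields
\[
\frac{d}{dt}E_j(t)+\int_\cK \frac{e^{q}|\bar\p v_j|^2}{\w{t-|x|}^{1+2\ve_2}}\,dx\le C\int_\cK e^{q}\bigl(|\p g||\p v_j|^2+|R_j||\p_t v_j|\bigr)\,dx.
\]

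The key step is then to bound the right-hand side by $\tfrac{C_0\ve_1}{1+t}\sum_{k\le j}E_k(t)$ plus absorbable lower-order contributions. For $|\p g||\p v_j|^2$, the null-condition hypotheses \eqref{NC:rewrite}--\eqref{cubic:null:condtion} together with Lemma \ref{lem:null} allow me to decompose each trilinear/quartic expression so that at least one derivative factor is tangential: either that factor is $\bar\p v_j$, absorbed by the ghost-weight gain on the left, or a $\bar\p$-derivative of $u$ which enjoys the improved decay $\ve_1\w{t+|x|}^{10\ve_2-3/2}$ in the exterior region $|x|\ge 1+t/2$ by \eqref{BA2}, while in the interior $|x|\le 1+t/2$ the bound $\w{t-|x|}\gt 1+t$ combined with \eqref{BA1} already supplies the needed $\ve_1(1+t)^{-1}$ pointwise factor. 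For $R_j$ I would Leibniz-expand the commutator into sums of products such as $\p_t^{k_1}\p u\cdot\p_t^{k_2}\p^2 u$; since $k_1+k_2\le j\le 2N$, at most one of the two factors carries more than $N$ derivatives, so the other one lies in $L^\infty$ via \eqref{BA1}, while the high-order factor is placed in an energy $E_k$ with $k\le j$.

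Summing over $j\le 2N$ and invoking Lemma \ref{lem:Gronwall} then delivers $\sum_{j\le 2N}E_j(t)\le\bigl(\sum_{j\le 2N}E_j(0)\bigr)(1+t)^{C_0\ve_1}$. The initial energies satisfy $\sum_j E_j(0)\ls\ve^2$ thanks to \eqref{initial:data}, once all $\p_t^j u(0,\cdot)$ are expressed via spatial derivatives of $(u_0,u_1)$ through the equation, whence \eqref{energy:time} follows; choosing $\ve_1$ small enough that $C_0\ve_1\le\ve_2$ then yields \eqref{energy:time'}. The main obstacle I expect is the combinatorial bookkeeping in the top-order commutator $R_{2N}$: I must verify that in every Leibniz term at least one factor has at most $N$ derivatives so the pointwise bootstrap applies, and that the null-form cancellation provided by Lemma \ref{lem:null} survives the full expansion in pure time derivatives, which is essentially the content of Lemma \ref{lem:eqn:high} specialized to $Z^a=\p_t^j$.
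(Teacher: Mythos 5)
Your proposal is correct and follows essentially the same route as the paper: commuting with pure time derivatives $\p_t^j$ (which preserve the homogeneous Dirichlet condition so all boundary fluxes vanish), a ghost-weighted quasilinear energy identity whose tangential gain absorbs the $\bar\p$-factors produced by the null structure, the interior/exterior splitting using \eqref{BA1} and \eqref{BA2} to extract the $\ve_1(1+s)^{-1}$ coefficient, and Gronwall to conclude. The only differences (writing the energy with the full metric $g^{\alpha\beta}$ rather than integrating the principal terms by parts via \eqref{sym:condition}, and the choice $q=-\int_{-\infty}^{|x|-t}\w{s}^{-1-2\ve_2}ds$ in place of $\arctan(|x|-t)$) are cosmetic.
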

\begin{proof}
Note that \eqref{eqn:high} can be written as
\begin{equation}\label{energy:time1}
\Box Z^au=\sum_{\alpha,\beta,\gamma=0}^2\Big\{Q^{\alpha\beta\gamma}\p^2_{\alpha\beta}Z^au\p_{\gamma}u
+\sum_{\substack{b+c\le a,\\b<a}}Q_{abc}^{\alpha\beta\gamma}\p^2_{\alpha\beta}Z^bu\p_{\gamma}Z^cu\Big\}.
\end{equation}
Through multiplying \eqref{energy:time1} by $e^q\p_tZ^au$ with $q=\arctan(|x|-t)$
(such a ghost weight $e^q$ was firstly introduced in \cite{Alinhac01a}) and taking
direct computation, we have
\begin{equation}\label{energy:time2}
\begin{split}
&\quad\;\frac12\p_t[e^q(|\p_tZ^au|^2+|\nabla Z^au|^2)]-\dive(e^q\p_tZ^au\nabla Z^au)
+\frac{e^q}{2\w{t-|x|}^2}|\bar\p Z^au|^2\\
&=\sum_{\alpha,\beta,\gamma=0}^2\Big\{Q^{\alpha\beta\gamma}e^q\p_tZ^au\p^2_{\alpha\beta}Z^au\p_{\gamma}u
+\sum_{\substack{b+c\le a,\\b<a}}Q_{abc}^{\alpha\beta\gamma}e^q\p_tZ^au\p^2_{\alpha\beta}Z^bu\p_{\gamma}Z^cu\Big\}.
\end{split}
\end{equation}
For the first term in the second line of \eqref{energy:time2}, it follows from direct computation with $Q^{\alpha\beta\gamma}=Q^{\beta\alpha\gamma}$ that
\begin{equation}\label{energy:time3}
\begin{split}
&\quad\; Q^{\alpha\beta\gamma}e^q\p_tZ^au\p^2_{\alpha\beta}Z^au\p_{\gamma}u\\
&=\p_{\alpha}(Q^{\alpha\beta\gamma}e^q\p_tZ^au\p_{\beta}Z^au\p_{\gamma}u)
-Q^{\alpha\beta\gamma}e^q\p_tZ^au\p_{\alpha}q\p_{\beta}Z^au\p_{\gamma}u\\
&\quad-Q^{\alpha\beta\gamma}e^q\p_tZ^au\p_{\beta}Z^au\p^2_{\alpha\gamma}u
-Q^{\alpha\beta\gamma}e^q\p_t\p_{\alpha}Z^au\p_{\beta}Z^au\p_{\gamma}u\\
&=\p_{\alpha}(Q^{\alpha\beta\gamma}e^q\p_tZ^au\p_{\beta}Z^au\p_{\gamma}u)
-Q^{\alpha\beta\gamma}e^q\p_tZ^au\p_{\alpha}q\p_{\beta}Z^au\p_{\gamma}u\\
&\quad-Q^{\alpha\beta\gamma}e^q\p_tZ^au\p_{\beta}Z^au\p^2_{\alpha\gamma}u
-\p_t(\frac{1}{2}Q^{\alpha\beta\gamma}e^q\p_{\alpha}Z^au\p_{\beta}Z^au\p_{\gamma}u)\\
&\quad+\frac{1}{2}Q^{\alpha\beta\gamma}e^q\p_tq\p_{\alpha}Z^au\p_{\beta}Z^au\p_{\gamma}u
+\frac{1}{2}Q^{\alpha\beta\gamma}e^q\p_{\alpha}Z^au\p_{\beta}Z^au\p^2_{0\gamma}u,
\end{split}
\end{equation}
where the summation $\ds\sum_{\alpha,\beta,\gamma=0}^2$ is omitted in \eqref{energy:time3}.

Choose $Z^a=\p_t^j$ with $j=|a|$ in \eqref{energy:time3} and the general notation $Z^a$ is still used in the remaining part.
By integrating \eqref{energy:time2} and \eqref{energy:time3} over $[0,t]\times\cK$
with the boundary conditions $\p_t^lu|_{\p\cK}=0$ for any integer $l\ge0$, one has
\begin{equation}\label{energy:time4}
\begin{split}
&\quad\;\|\p Z^au(t)\|^2_{L^2(\cK)}+\int_0^t\int_{\cK}\frac{|\bar\p Z^au(s,x)|^2}{\w{s-|x|}^2}dxds\\
&\ls\|\p Z^au(0)\|^2_{L^2(\cK)}+\|\p u(0)\|_{L_x^\infty}\|\p Z^au(0)\|^2_{L^2(\cK)}\\
&\quad+\|\p u(t)\|_{L_x^\infty}\|\p Z^au(t)\|^2_{L^2(\cK)}
+\int_0^t\int_{\cK}(\sum_{\substack{|b|+|c|\le|a|,\\|b|<|a|}}|I^{abc}_1|+|I^a_2|)dxds,
\end{split}
\end{equation}
where
\begin{equation}\label{energy:time5}
\begin{split}
I^{abc}_1&:=\sum_{\alpha,\beta,\gamma=0}^2Q_{abc}^{\alpha\beta\gamma}e^q\p_tZ^au\p^2_{\alpha\beta}Z^bu\p_{\gamma}Z^cu,\\
I^a_2&:=\sum_{\alpha,\beta,\gamma=0}^2(-Q^{\alpha\beta\gamma}e^q\p_tZ^au\p_{\alpha}q\p_{\beta}Z^au\p_{\gamma}u
-Q^{\alpha\beta\gamma}e^q\p_tZ^au\p_{\beta}Z^au\p^2_{\alpha\gamma}u\\
&\qquad+\frac{1}{2}Q^{\alpha\beta\gamma}e^q\p_tq\p_{\alpha}Z^au\p_{\beta}Z^au\p_{\gamma}u
+\frac{1}{2}Q^{\alpha\beta\gamma}e^q\p_{\alpha}Z^au\p_{\beta}Z^au\p^2_{0\gamma}u).
\end{split}
\end{equation}
Although $Z^a=\p_t^j$ has been taken here, $I^{abc}_1$ and $I^a_2$ can be analogously treated for $Z=\{\p_t,\p_1,\p_2,\Omega\}$.
When $|b|\le N$ in $I^{abc}_1$, it follows from Lemmas \ref{lem:eqn:high}-\ref{lem:null:structure} that
\begin{equation}\label{energy:time6}
|I^{abc}_1|\ls|\p_tZ^au||\bar\p\p Z^bu||\p Z^cu|+|\p_tZ^au||\p^2Z^bu||\bar\p Z^cu|.
\end{equation}
For $x\in\cK\cap\{x: |x|\le1+s/2\}$, it can be deduced from \eqref{BA1} that
\begin{equation}\label{energy:time7}
\sum_{|b|\le N}\int_{\cK\cap\{x:|x|\le1+s/2\}}|I^{abc}_1|dx
\ls\ve_1(1+s)^{-1}\sum_{|b|\le|a|}\|\p Z^bu(s)\|^2_{L^2(\cK)}.
\end{equation}
For $x\in\cK\cap\{x: |x|\ge1+s/2\}$, \eqref{BA2} leads to
\begin{equation}\label{energy:time8}
\sum_{|b|\le N}\int_{\cK\cap\{x:|x|\ge1+s/2\}}|\p_tZ^au||\bar\p\p Z^bu||\p Z^cu|dx\ls\ve_1(1+s)^{-1}\sum_{|c|\le|a|}\|\p Z^cu(s)\|^2_{L^2(\cK)}.
\end{equation}
For the last term in \eqref{energy:time6}, it follows from \eqref{BA1} and the Young inequality that
\begin{equation}\label{energy:time9}
\begin{split}
&\quad\sum_{|b|\le N}\int\int_{\cK\cap\{x:|x|\ge1+s/2\}}|\p_tZ^au||\p^2Z^bu||\bar\p Z^cu|dxds\\
&\ls\ve_1\int_0^t\frac{\|\p Z^au(s)\|^2_{L^2(\cK)}ds}{1+s}
+\sum_{|c|\le|a|}\ve_1\int_0^t\int_{\cK}\frac{|\bar\p Z^cu(s,x)|^2}{\w{s-|x|}^2}dxds.
\end{split}
\end{equation}
When $|b|\ge N+1$ in $I^{abc}_1$, we can see that $|c|\le N-1$ and
\begin{equation*}
\begin{split}
I_1^{abc}&=\p_{\alpha}(Q_{abc}^{\alpha\beta\gamma}e^q\p_tZ^au\p_{\beta}Z^bu\p_{\gamma}Z^cu)
-Q_{abc}^{\alpha\beta\gamma}\p_tZ^au\p_{\beta}Z^bu\p_{\alpha}(e^q\p_{\gamma}Z^cu)\\
&\quad-Q_{abc}^{\alpha\beta\gamma}e^q\p_t\p_{\alpha}Z^au\p_{\beta}Z^bu\p_{\gamma}Z^cu\\
&=\p_{\alpha}(Q_{abc}^{\alpha\beta\gamma}e^q\p_tZ^au\p_{\beta}Z^bu\p_{\gamma}Z^cu)
-Q_{abc}^{\alpha\beta\gamma}\p_tZ^au\p_{\beta}Z^bu\p_{\alpha}(e^q\p_{\gamma}Z^cu)\\
&\quad-\p_t(Q_{abc}^{\alpha\beta\gamma}e^q\p_{\alpha}Z^au\p_{\beta}Z^bu\p_{\gamma}Z^cu)
+Q_{abc}^{\alpha\beta\gamma}\p_{\alpha}Z^au\p_t(e^q\p_{\beta}Z^bu\p_{\gamma}Z^cu),
\end{split}
\end{equation*}
Thus, the analogous estimates for $I^{abc}_1$ with $|b|\ge N+1$ in \eqref{energy:time5} can be established as in \eqref{energy:time6}-\eqref{energy:time9} with the boundary conditions and the fact $\bar\p q=0$.
Collecting \eqref{energy:time6}-\eqref{energy:time9} yields
\begin{equation}\label{energy:time10}
\begin{split}
&\int_0^t\int_{\cK}\sum_{\substack{|b|+|c|\le|a|,\\|b|<|a|}}|I^{abc}_1|dxds
\ls\ve^2+\ve_1\sum_{|a|\le2N}\|\p Z^au(t)\|^2_{L^2(\cK)}\\
&+\ve_1\sum_{|b|\le|a|}\Big\{\int_0^t\frac{\|\p Z^bu(s)\|^2_{L^2(\cK)}ds}{1+s}+\int_0^t\int_{\cK}\frac{|\bar\p Z^bu(s,x)|^2}{\w{s-|x|}^2}dxds\Big\}.
\end{split}\end{equation}
In view of \eqref{null:condition}, Lemma \ref{lem:null:structure}, \eqref{BA1}, \eqref{BA2} and $\bar\p q=0$,
the estimate of $I^a_2$ in \eqref{energy:time5} is the same as \eqref{energy:time10}.
Combining \eqref{initial:data}, \eqref{BA1}, \eqref{energy:time4} and \eqref{energy:time10} for all $|a|\le2N$
together with the smallness of $\ve_1$ derives
\begin{equation*}
\begin{split}
&\quad\sum_{|a|\le2N}\Big\{\|\p Z^au\|^2_{L^2(\cK)}+\int_0^t\int_{\cK}\frac{|\bar\p Z^au(s,x)|^2}{\w{s-|x|}^2}dxds\Big\}\\
&\ls\ve^2+\sum_{|a|\le2N}\ve_1\Big\{\int_0^t\frac{\|\p Z^au(s)\|^2_{L^2(\cK)}ds}{1+s}
+\int_0^t\int_{\cK}\frac{|\bar\p Z^au(s,x)|^2}{\w{s-|x|}^2}dxds\Big\}.
\end{split}
\end{equation*}
Thus, returning to the notation $Z^a=\p_t^j$ we can achieve
\begin{equation}\label{energy:time11}
\sum_{j\le2N}\|\p\p_t^ju\|^2_{L^2(\cK)}\ls\ve^2+\ve_1\int_0^t\sum_{j\le2N}\frac{\|\p\p_t^ju(s)\|^2_{L^2(\cK)}ds}{1+s}.
\end{equation}
Applying Lemma \ref{lem:Gronwall} to \eqref{energy:time11} completes the proof of \eqref{energy:time}.
\end{proof}

\begin{lemma}
Under the assumptions of Theorem \ref{thm1}, let $u$ be the solution of \eqref{QWE} and suppose that \eqref{BA1}-\eqref{BA3} hold.
Then it holds that
\begin{equation}\label{energy:dtdx}
\sum_{|a|\le2N}\|\p\p^au\|_{L^2(\cK)}\ls\ve_1(1+t)^{\ve_2}.
\end{equation}
\end{lemma}
\begin{proof}
Set $\ds E_j(t):=\sum_{k=0}^{2N-j}\|\p\p_t^ku(t)\|_{H^j(\cK)}$ with $0\le j\le2N$.
Then one can find that for $j\ge1$,
\begin{equation}\label{energy:dtdx1}
\begin{split}
E_j(t)&\ls\sum_{k=0}^{2N-j}[\|\p\p_t^ku(t)\|_{L^2(\cK)}
+\sum_{1\le|a|\le j}(\|\p_t\p_t^k\p_x^au(t)\|_{L^2(\cK)}+\|\p_x\p_t^k\p_x^au(t)\|_{L^2(\cK)})]\\
&\ls E_0(t)+E_{j-1}(t)+\sum_{k=0}^{2N-j}\sum_{2\le|a|\le j+1}\|\p_t^k\p_x^au(t)\|_{L^2(\cK)},
\end{split}
\end{equation}
where we have used the fact that
\begin{equation*}
\sum_{k=0}^{2N-j}\sum_{1\le|a|\le j}\|\p_t\p_t^k\p_x^au(t)\|_{L^2(\cK)}
\ls\sum_{k=0}^{2N-j}\sum_{|a'|\le j-1}\|\p_x\p_t^{k+1}\p_x^{a'}u(t)\|_{L^2(\cK)}
\ls E_{j-1}(t).
\end{equation*}
For the last term in \eqref{energy:dtdx1}, it can be deduced from the elliptic estimate \eqref{ellip} that
\begin{equation}\label{energy:dtdx2}
\begin{split}
\|\p_t^k\p_x^au\|_{L^2(\cK)}&\ls\|\Delta_x\p_t^ku\|_{H^{|a|-2}(\cK)}+\|\p_t^ku\|_{H^{|a|-1}(\cK_{R+1})}\\
&\ls\|\p_t^{k+2}u\|_{H^{|a|-2}(\cK)}+\|\p_t^kQ(\p u,\p^2u)\|_{H^{|a|-2}(\cK)}\\
&\quad+\|\p_t^ku\|_{L^2(\cK_{R+1})}+\sum_{1\le|b|\le|a|-1}\|\p_t^k\p_x^bu\|_{L^2(\cK_{R+1})},
\end{split}
\end{equation}
where $\Delta_x=\p_t^2-\Box$ and the equation in \eqref{QWE} have been used.
Furthermore, by \eqref{BA1} with $|a|+k\le2N+1$ we have
\begin{equation}\label{energy:dtdx3}
\begin{split}
\|\p_t^kQ(\p u,\p^2u)\|_{H^{|a|-2}(\cK)}
&\ls\sum_{l\le k}\ve_1(\|\p_t^l\p u\|_{H^{|a|-1}(\cK)}+\|\p_t^{l+2}u\|_{H^{|a|-2}(\cK)})\\
&\ls\ve_1[E_j(t)+E_{j-1}(t)].
\end{split}
\end{equation}
Combining \eqref{energy:dtdx1}-\eqref{energy:dtdx3} with \eqref{BA3} and \eqref{energy:time'} shows
\begin{equation*}
\begin{split}
E_j(t)&\ls E_0(t)+E_{j-1}(t)+\ve_1E_j(t)+\|u\|_{L^2(\cK_{R+1})}\\
&\ls\ve(1+t)^{\ve_2}+E_{j-1}(t)+\ve_1E_j(t)+\ve_1.
\end{split}
\end{equation*}
This, together with the smallness of $\ve_1>\ve$, yields \eqref{energy:dtdx}.
\end{proof}

\begin{lemma}
Under the assumptions of Theorem \ref{thm1}, let $u$ be the solution of \eqref{QWE} and suppose that \eqref{BA1}-\eqref{BA3} hold.
Then
\begin{equation}\label{energy:A}
\sum_{|a|\le2N-1}\|\p Z^au\|_{L^2(\cK)}\ls\ve_1(1+t)^{\ve_2+1/2}.
\end{equation}
\end{lemma}
\begin{proof}
Analogously to \eqref{energy:time2}, \eqref{energy:time3}, \eqref{energy:time4} and \eqref{energy:time10}, we have
\begin{equation}\label{energy:A1}
\begin{split}
&\sum_{|a|\le2N-1}\|\p Z^au\|^2_{L^2(\cK)}\ls\ve^2+\sum_{|a|\le2N-1}\Big\{\ve_1\int_0^t\frac{\|\p Z^au(s)\|^2_{L^2(\cK)}ds}{1+s}
+|\cB^a_1|+|\cB^a_2|\Big\},\\
&\cB^a_1:=\int_0^t\int_{\p\cK}e^q\nu(x)\cdot\nabla Z^au(s,x)\p_tZ^au(s,x)d\sigma ds,\\
&\cB^a_2:=\int_0^t\int_{\p\cK}\sum_{j=1}^2\sum_{\beta,\gamma=0}^2
Q^{j\beta\gamma}\nu_j(x)e^q\p_tZ^au\p_{\beta}Z^au\p_{\gamma}ud\sigma ds,
\end{split}
\end{equation}
where $\nu(x)=(\nu_1(x),\nu_2(x))$ is the unit outer normal of the boundary $\cK$ and $d\sigma$ is the curve measure on $\p\cK$.
According to $\p\cK\subset\overline{\cK_1}$ and the trace theorem, one has
\begin{equation}\label{energy:A2}
\begin{split}
|\cB^a_1|&\ls\sum_{|b|\le|a|}\int_0^t\|(1-\chi_{[1,2]}(x))\p_t\p^bu\|_{L^2(\p\cK)}\|(1-\chi_{[1,2]}(x))\p_x\p^bu\|_{L^2(\p\cK)}ds\\
&\ls\sum_{|b|\le|a|}\int_0^t\|(1-\chi_{[1,2]}(x))\p \p^bu\|^2_{H^1(\cK)}ds\\
&\ls\sum_{|b|\le|a|+1}\int_0^t\|\p \p^bu\|^2_{L^2(\cK_2)}ds\ls\ve_1^2(1+t)^{2\ve_2+1},
\end{split}
\end{equation}
where \eqref{energy:dtdx} has been used. Analogously, $\cB^a_2$ can be also treated.
Then, \eqref{energy:A1} and \eqref{energy:A2} ensure that there is a constant $C_1>0$ such that
\begin{equation}\label{energy:A3}
\begin{split}
\sum_{|a|\le2N-1}\|\p Z^au\|^2_{L^2(\cK)}\le C_1\ve^2+C_1\ve_1^2(1+t)^{2\ve_2+1}\\
+C_1\ve_1\int_0^t\sum_{|a|\le2N-1}\frac{\|\p Z^au(s)\|^2_{L^2(\cK)}ds}{1+s}.
\end{split}
\end{equation}
Applying Lemma \ref{lem:Gronwall} to \eqref{energy:A3} with $A=C_1\ve^2$, $B=C_1\ve_1$, $C=C_1\ve_1^2$, $D=2\ve_2+1>B=C_1\ve_1$ and
the smallness of $\ve_1$, we arrive at
\begin{equation*}
\begin{split}
\sum_{|a|\le2N-1}\|\p Z^au\|^2_{L^2(\cK)}&\le C_1\ve_1^2(1+t)^{2\ve_2+1}+\frac{C_1^2\ve_1^3}{2\ve_2+1-C_1\ve_1}(1+t)^{2\ve_2+1}\\
&\ls\ve_1^2(1+t)^{2\ve_2+1},
\end{split}
\end{equation*}
which yields \eqref{energy:A}.
\end{proof}

\subsection{Decay estimates of local energy and improved energy estimates}\label{sect:LocEnergy}

To improve the energy estimate \eqref{energy:A}, one needs a better estimate on the boundary term \eqref{energy:A2}.
This subsection aims to derive the precise time decay estimates of the local energy to problem \eqref{QWE}
by means of the spacetime pointwise estimate and the elliptic estimate.
To this end, we will treat the local energy $\|u\|_{L^2(\cK_R)}$, which can be derived by \eqref{loc:divform}.
\begin{lemma}
Under the assumptions of Theorem \ref{thm1}, let $u$ be the solution of \eqref{QWE} and suppose that \eqref{BA1}-\eqref{BA3} hold.
Then we have
\begin{equation}\label{u:loc:pw}
\|u\|_{L^\infty(\cK_R)}\ls(\ve+\ve_1^2)(1+t)^{-1}.
\end{equation}
\end{lemma}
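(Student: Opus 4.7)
The plan is to apply the local pointwise estimate \eqref{pw:crit} not to $u$ directly but to the good unknown $V$ of \eqref{normal:form}, since the equation $\Box V=\cC+G(\p^{\le 4}u)$ is at least cubic in $u$ with null structure (see \eqref{QWE3}--\eqref{YHC-4}). The homogeneous Dirichlet condition $V|_{\p\cK}=0$ is inherited from $u|_{\p\cK}=0$, and $(V,\p_tV)|_{t=0}$ differs from $(\ve u_0,\ve u_1)$ only by an $O(\ve^2)$ polynomial in the initial data, hence is still bounded in $H^2(\cK)$ by $C\ve$. Once I establish $\|V\|_{L^\infty(\cK_R)}\ls(\ve+\ve_1^3)(1+t)^{-1}$, the bound on $u$ will follow by inverting \eqref{normal:form}.

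For the four terms on the right of \eqref{pw:crit} applied to $V$, I expect each to be bounded by $\ve+\ve_1^3$. The initial-data term is $\ls\ve$. On $\cK_3$ one has $\w{y}\ls 1$ and $\w{s-|y|}\approx\w{s}$, so BA1 gives $|\p^{\le 2N}u|\ls\ve_1\w{s}^{-1}$, whence $|\p^{\le 1}(\cC+G)|\ls\ve_1^3\w{s}^{-3}$; this handles both the compact $L^2$ term and the $t=0$ point contribution. The weighted pointwise term
\[
\sup_{(s,y)\in[0,t]\times\cK}\w{y}^{1/2}\cW_{3/2+\mu,1+\nu}(s,y)\,|\p^{\le 2}(\cC+G)(s,y)|
\]
is the main work; here $\mu,\nu\in(0,1/2)$ will be chosen small.

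For that sup I would invoke Lemma \ref{lem:null} on each of the null multilinear forms $\cC,\cN_2,\ldots,\cN_5$, so every summand contains at least one factor of the type $|\bar\p Z^b u|$. In the outer region $|y|\ge 1+s/2$ BA2 supplies $|\bar\p Z^bu|\ls\ve_1\w{s+|y|}^{10\ve_2-3/2}$, the remaining derivative factors are estimated by BA1's $\ve_1\w{y}^{-1/2}\w{s-|y|}^{-1}$, and the bare factors of $u$ in the higher-order pieces of $G$ are controlled by BA3. For the leading cubic term one then gets $|\cC|\ls\ve_1^3\w{s+|y|}^{10\ve_2-5/2}\w{s-|y|}^{-2}$; multiplying by the weight (absorbing $\w{y}^{1/2}$ against $\w{y}^{-1/2}$ and using $\min\{\w{y},\w{s-|y|}\}=\w{s-|y|}$ in this region) yields $\ls\ve_1^3\w{s+|y|}^{\mu+10\ve_2-1/2}\w{s-|y|}^{\nu-1}$, uniformly bounded for small $\mu,\nu$. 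In the interior $|y|\le 1+s/2$ the trivial estimate $|\bar\p Z^bu|\le|\p Z^bu|$ together with $\w{s-|y|}\approx\w{s+|y|}\approx\w{s}$ and $\w{y}\ls\w{s}$ is enough; the terms of $G$ are strictly lower order because each additional factor of $u$ brings the BA3 decay $\w{t+|x|}^{-1/2+4\ve_2}$.

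Finally, from \eqref{normal:form}, $u-V=\tfrac12\sum_\alpha C^\alpha u\p_\alpha u-\tfrac14\sum_{\alpha,\beta}C^\alpha C^\beta\bigl(u\p_\alpha u\p_\beta u+\tfrac12u^2\p^2_{\alpha\beta}u\bigr)$. On $\cK_R$, BA1 gives $|\p u|,|\p^2u|\ls\ve_1\w{t}^{-1}$ and BA3 gives $|u|\ls\ve_1\w{t}^{-1/2+4\ve_2}$, so $\|u-V\|_{L^\infty(\cK_R)}\ls\ve_1^2\w{t}^{-3/2+4\ve_2}=o(\w{t}^{-1})$ (using $4\ve_2<1/2$). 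Combining with the bound on $V$ yields $\|u\|_{L^\infty(\cK_R)}\ls(\ve+\ve_1^2)(1+t)^{-1}$. The main obstacle will be the case-by-case bookkeeping in the weighted pointwise term, where every multilinear piece of $\cC$ and of $G$ in \eqref{YHC-4} must be paired against the weight $\w{y}^{1/2}\cW_{3/2+\mu,1+\nu}$; this is precisely the step that motivates working with $V$ rather than $u$, since a direct application of \eqref{pw:crit} to $u$ would produce a slowly growing factor $\w{t}^{\mu+\nu+10\ve_2}$ from the quadratic nonlinearity that cannot be removed.
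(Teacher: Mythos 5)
Your proposal is correct and follows essentially the same route as the paper: apply \eqref{pw:crit} to the good unknown $V$ of \eqref{normal:form} (using $V|_{\p\cK}=0$ and the $O(\ve)$ bound on its data), bound the compactly supported $L^2$ terms via \eqref{BA1} near the obstacle, bound the weighted sup term by extracting a good derivative from each null form via Lemma \ref{lem:null} and pairing \eqref{BA2} with \eqref{BA1}--\eqref{BA3} in the outer region (the interior region being handled by the stronger $\w{s-|y|}^{-1}$ decay alone), and finally recover $u$ from $V$ since $u-V$ is quadratic and decays faster than $\w{t}^{-1}$ on $\cK_R$. The only cosmetic slip is the reference to ``$|\p^{\le 2N}u|$'' from \eqref{BA1}, which only covers $|a|\le N$ derivatives, but this suffices since at most about six derivatives of $u$ enter $\p^{\le 2}(\cC+G)$.
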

\begin{proof}
It follows from \eqref{eqn:good:div} with $a=0$ that
\begin{equation}\label{u:loc:pw1}
\begin{split}
\Box V_0&=\frac{1}{2}\sum_{\alpha=0}^2
[\Delta,C_{000}^{0,\alpha}(\omega)\chi_{[1/2,1]}(x)](u\p_{\alpha}u)\\
&\quad+\sum_{\alpha=0}^2\{C(\omega)(1-\chi_{[1/2,1]}(x))Q_0(\p_{\alpha}u,u)+C(\omega)\p_{\alpha}u\Box u\\
&\qquad+C(\omega)\chi_{[1/2,1]}(x)(u\Box\p_{\alpha}u+\p_{\alpha}u\Box u)\}\\
&\quad+\p_\theta(\frac{C(\omega)u \p^2u}{|x|})
+\sum_{\alpha=0}^2\p_{\alpha}(\frac{C(\omega)u Z\p u}{|x|})
+\frac{C(\omega)u Z\p u}{|x|^2},
\end{split}
\end{equation}
where the good unknown $V_0$ is defined by \eqref{def:goodunknown}.
Applying \eqref{loc:divform} with $\mu=\nu=\ve_2$ to \eqref{u:loc:pw1} yields
\begin{equation}\label{u:loc:pw2}
\begin{split}
\w{t}\|V_0\|_{L^\infty(\cK_R)}&\ls\ve+\sup_{s\in[0,t]}\w{s}^{3/2+\ve_2}\|G_0^{loc}\|_{L^\infty(\cK_3)}\\
&+\sup_{(s,y)\in[0,t]\times\cK}\w{y}^{1/2}\cW_{3/2+\ve_2,1}(s,y)|G_0(s,y)|\\
&+\sum_{|a|\le3}\sup_{(s,y)\in[0,t]\times\cK}\w{y}^{1/2}\cW_{1+2\ve_2,1}(s,y)|Z^a(\frac{u Z\p u}{|y|})|,
\end{split}
\end{equation}
where the initial data \eqref{initial:data} and \eqref{comm:estimate} have been used, and
\begin{equation}\label{u:loc:pw3}
\begin{split}
G_0&:=\sum_{|a|\le2}\Big\{|Z^a(u\Box\p u)|+|Z^a(\p u\Box u)|\\
&\qquad+\frac{|Z^a(u\p u)|+|Z^a(Zu\p u)|+|Z^a(uZ\p u)|}{|x|^2}\Big\},\\
G_0^{loc}&:=\sum_{|a|\le1}\sum_{|b|\le1,|c|\le2}|\p^a(\p^bu\p^c\p u)|.
\end{split}
\end{equation}
In the region $\cK\cap\{y:|y|\le3+s/2\}\supseteq\cK_3$, it follows from \eqref{eqn:high}, \eqref{BA1}, \eqref{BA3} and $N\ge59$ that
\begin{equation}\label{u:loc:pw4}
\begin{split}
|G_0(s,y)|&\ls\ve_1^3\w{y}^{-1}\w{s}^{-5/2}+\ve_1^2\w{y}^{-5/2}\w{s}^{-1.6},\\
|G_0^{loc}(s,y)|&\ls\ve_1^2\w{s}^{-1.6},
\end{split}
\end{equation}
which yields
\begin{equation}\label{u:loc:pw5}
\sup_{s\in[0,t]}\w{s}^{3/2+\ve_2}\|G_0^{loc}\|_{L^\infty(\cK_3)}\ls\ve_1^2.
\end{equation}
In the region $\cK\cap\{y: |y|\ge3+s/2\}$, Lemmas \ref{lem:eqn:high}-\ref{lem:null:structure} and \eqref{BA1}-\eqref{BA3} imply
\begin{equation}\label{u:loc:pw6}
|G_0(s,y)|\ls\ve_1^2\w{s+|y|}^{2\ve_2-5/2}\w{s-|y|}^{-1}.
\end{equation}
On the other hand, it can be deduced from \eqref{BA1} and \eqref{BA3} that
\begin{equation}\label{u:loc:pw7}
\sum_{|a|\le3}|Z^a(\frac{u Z\p u}{|y|})|\ls\ve_1^2\w{y}^{-3/2}\w{s+|y|}^{\ve_2-1/2}\w{s-|y|}^{-1}.
\end{equation}
Collecting \eqref{u:loc:pw2}-\eqref{u:loc:pw7} yields
\begin{equation*}
\|V_0\|_{L^\infty(\cK_R)}\ls(\ve+\ve_1^2)(1+t)^{-1}.
\end{equation*}
This, together with \eqref{def:goodunknown}, \eqref{BA1} and \eqref{BA3}, completes the proof of \eqref{u:loc:pw}.
\end{proof}

\begin{lemma}\label{lem:loc:energy1}
Under the assumptions of Theorem \ref{thm1}, let $u$ be the solution of \eqref{QWE} and suppose that \eqref{BA1}-\eqref{BA3} hold.
Then we have
\begin{equation}\label{LocEnergydt}
\sum_{j\le2N-8}\|\p_t\p_t^ju\|_{L^2(\cK_R)}\ls(\ve+\ve_1^2)(1+t)^{4\ve_2-1/2}.
\end{equation}
\end{lemma}
\begin{proof}
Note that for $Z^a=\p_t^j$ with $j=|a|$ in \eqref{eqn:good}, the term $\Box(\tilde Z^a-Z^a)u$ vanishes.
Applying \eqref{loc:dt:sharp} to $\Box V_a$ given by \eqref{eqn:good} with $j=|a|\le2N-8$ and $\mu=\nu=\ve_2/2$ yields
\begin{equation}\label{LocEnergydt1}
\begin{split}
\w{t}\sum_{|a|\le2N-8}\|\p_tV_a\|_{L^\infty(\cK_R)}
\ls\ve+\sum_{|d|\le2}\sum_{|a|\le2N-8}\sup_{s\in[0,t]}\w{s}\|\p^d\Box V_a(s)\|_{L^2(\cK_3)}\\
+\sum_{|d|\le3}\sum_{|a|\le2N-8}\sup_{(s,y)\in[0,t]\times\cK}\w{y}^{1/2}\cW_{1+\ve_2,1}(s,y)|Z^d\Box V_a(s,y)|.
\end{split}
\end{equation}
By \eqref{Sobo:ineq} and \eqref{energy:A}, we can see that
\begin{equation}\label{LocEnergydt2}
\sum_{|a|\le2N-3}|\p Z^au|\ls\ve_1\w{x}^{-1/2}(1+t)^{\ve_2+1/2}.
\end{equation}
It is noted that $(\Box\p_{\mu}Z^bu)Z^cu$ in the third line of \eqref{eqn:good} with $|c|>N$ must be $(\Box\p_{\mu}Z^bu)\p_t^{|c|}u$.
Therefore, collecting \eqref{eqn:high}, \eqref{Q12:null:structure}, \eqref{eqn:good}, \eqref{BA1}-\eqref{BA3} and \eqref{LocEnergydt2} leads to
\begin{equation}\label{LocEnergydt3}
\begin{split}
&|Z^d\Box V_a(s,y)|\ls\ve_1^2\w{y}^{-3/2}(1+s)^{\ve_2+1/2}\w{s+|y|}^{\ve_2-1/2}\w{s-|y|}^{\ve_2-1/2}\\
&+\ve_1^3(1+s)^{\ve_2+1/2}(\w{y}^{-3/2}\w{s-|y|}^{-2}+\w{y}^{-1}\w{s+|y|}^{\ve_2-1/2}\w{s-|y|}^{\ve_2-3/2}).
\end{split}
\end{equation}
Substituting \eqref{LocEnergydt3} into \eqref{LocEnergydt1} with \eqref{eqn:good}, \eqref{BA1}, \eqref{energy:dtdx}, \eqref{u:loc:pw} and the fact $\supp_y u(s,y)\subset\{y:|y|\le s+M_0\}$ derives
\begin{equation*}
\sum_{|a|\le2N-8}\|\p_tV_a\|_{L^\infty(\cK_R)}\ls(\ve+\ve_1^2)(1+t)^{4\ve_2-1/2}.
\end{equation*}
Combining this estimate with $Z^a=\p_t^j$, \eqref{def:goodunknown}, \eqref{BA1}, \eqref{energy:dtdx}, \eqref{u:loc:pw} and the standard Sobolev embedding yields \eqref{LocEnergydt}.
\end{proof}

\begin{remark}\label{rmk5-1}
Without the decay factor $\w{t-|x|}^{0.001-1/2}$  in \eqref{thm1:decay:c}, one can not achieve the local energy decay estimate \eqref{LocEnergydt}.
Indeed, if we apply \eqref{Kubo15cpaa} instead of \eqref{impr:pw:Cauchy}
as in Remark \ref{rmk4-1}, then we can only obtain
the weak decay factor $\w{t+|x|}^{-1/2}(\min\{\w{x},\w{t-|x|}\})^{-1/2}$ for the estimate of $u$.
Substituting this into \eqref{LocEnergydt3} yields that for $|y|\le s/2$,
\begin{equation}\label{YHC-35}
|Z^d\Box V_a(s,y)|\ls\ve_1^2\w{y}^{-2}(1+s)^{\ve_2+1/2}\w{s+|y|}^{-1/2}+\cdots
\end{equation}
On the other hand, in order to avoid estimating $Z^au$, if one attends to apply \eqref{loc:dt:sharp} to \eqref{eqn:high} instead of \eqref{eqn:good}, then such an estimate is obtained
\begin{equation}\label{YHC-36}
|Z^d\Box Z^au(s,y)|\ls\ve_1^2\w{y}^{-1}(1+s)^{\ve_2+1/2}\w{s-|y|}^{-1}.
\end{equation}
Substituting \eqref{YHC-35} and \eqref{YHC-36} into \eqref{loc:dt:sharp} and the last term in \eqref{loc:dt:sharp} derives
\begin{equation*}
\begin{split}
\sup_{(s,y)\in[0,t]\times\cK}\w{y}^{1/2}\cW_{1+\mu+\nu,1}(s,y)|Z^d\Box V_a(s,y)|&\ls\ve_1^2\w{t}^{1+\mu+\nu+\ve_2},\\
\sup_{(s,y)\in[0,t]\times\cK}\w{y}^{1/2}\cW_{1+\mu+\nu,1}(s,y)|Z^d\Box Z^au(s,y)|&\ls\ve_1^2\w{t}^{1+\mu+\nu+\ve_2}.
\end{split}
\end{equation*}
Thus, both \eqref{YHC-35} and \eqref{YHC-36} can not be used to derive the local energy decay estimate \eqref{LocEnergydt}.
\end{remark}

\begin{lemma}
Under the assumptions of Theorem \ref{thm1}, let $u$ be the solution of \eqref{QWE} and suppose that \eqref{BA1}-\eqref{BA3} hold.
Then we have
\begin{equation}\label{loc:energy}
\sum_{|a|\le2N-8}\|\p^au\|_{L^2(\cK_R)}\ls(\ve+\ve_1^2)(1+t)^{4\ve_2-1/2}.
\end{equation}
\end{lemma}
\begin{proof}
For $j=0,1,2,\cdots,2N-8$, denote $\ds E_j^{loc}(t):=\sum_{k\le j}\|\p_t^k\p_x^{2N-8-j}u\|_{L^2(\cK_{R+j})}$.
When $j\le2N-10$, we have $\p_x^{2N-8-j}=\p_x^2\p_x^{2N-10-j}$.
Thus, one can apply the elliptic estimate \eqref{ellip} to $(1-\chi_{[R+j,R+j+1]})\p_t^{k}u$
to obtain
\begin{equation}\label{loc:energy1}
\begin{split}
E_j^{loc}(t)&\ls\sum_{k\le j}\|\p_x^{2N-8-j}(1-\chi_{[R+j,R+j+1]})\p_t^ku\|_{L^2(\cK)}\\
&\ls\sum_{k\le j}\{\|\Delta[(1-\chi_{[R+j,R+j+1]})\p_t^ku]\|_{H^{2N-10-j}(\cK)}\\
&\quad+\sum_{k\le j}\|(1-\chi_{[R+j,R+j+1]})\p_t^ku\|_{H^{2N-9-j}(\cK)}\}\\
&\ls\sum_{k\le j}\|(1-\chi_{[R+j,R+j+1]})\Delta\p_t^ku\|_{H^{2N-10-j}(\cK)}+\sum_{j+1\le l\le2N-8}E_l^{loc}(t)\\
&\ls\sum_{k\le j}\|(\Box\p_t^ku,\p_t^{k+2}u)\|_{H^{2N-10-j}(\cK_{R+j+1})}+\sum_{j+1\le l\le2N-8}E_l^{loc}(t),
\end{split}
\end{equation}
where the fact of $\Delta=\p_t^2-\Box$ has been used.
From \eqref{QWE}, \eqref{BA1} and \eqref{energy:dtdx}, we can get that for $j\le2N-10$,
\begin{equation}\label{loc:energy2}
\sum_{k\le j}\|\Box\p_t^ku\|_{H^{2N-10-j}(\cK_{R+j+1})}\ls\ve_1^2(1+t)^{\ve_2-1}.
\end{equation}
Thus, \eqref{loc:energy1} and \eqref{loc:energy2} imply that for $j\le2N-10$,
\begin{equation}\label{loc:energy3}
E_j^{loc}(t)\ls\ve_1^2(1+t)^{\ve_2-1}+\sum_{j+1\le l\le2N-8}E_l^{loc}(t).
\end{equation}
Next we turn to the estimate of $E_{2N-9}^{loc}(t)$. It is easy to find that
\begin{equation}\label{loc:energy4}
(E_{2N-9}^{loc}(t))^2\ls\sum_{k\le2N-9}\sum_{i=1}^2\|\p_i[(1-\chi_{[R+2N-9,R+2N-8]})\p_t^ku]\|^2_{L^2(\cK)}.
\end{equation}
Denote $\tilde\chi:=1-\chi_{[R+2N-9,R+2N-8]}$. Then it follows from the integration by parts
together with the boundary condition that
\begin{equation}\label{loc:energy5}
\begin{split}
&\quad\sum_{i=1}^2\|\p_i[(1-\chi_{[R+2N-9,R+2N-8]})\p_t^ku]\|^2_{L^2(\cK)}\\
&=\int_{\cK}\sum_{i=1}^2\p_i[\tilde\chi\p_t^ku\p_i(\tilde\chi\p_t^ku)]dx
-\int_{\cK}\tilde\chi\p_t^ku\Delta(\tilde\chi\p_t^ku)dx\\
&=-\int_{\cK}\tilde\chi^2\p_t^ku(\Delta\p_t^ku)dx-\int_{\cK}\tilde\chi(\Delta\tilde\chi)|\p_t^ku|^2dx
-2\int_{\cK}\tilde\chi\p_t^ku\nabla\tilde\chi\cdot\nabla\p_t^kudx\\
&=-\int_{\cK}\tilde\chi^2\p_t^ku\p_t^{k+2}udx+\int_{\cK}\tilde\chi^2\p_t^ku\Box\p_t^kudx
-\int_{\cK}\tilde\chi(\Delta\tilde\chi)|\p_t^ku|^2dx\\
&\quad-\int_{\cK}\dive(\tilde\chi\nabla\tilde\chi|\p_t^ku|^2)dx
+\int_{\cK}\dive(\tilde\chi\nabla\tilde\chi)|\p_t^ku|^2dx\\
&\ls(E_{2N-8}^{loc}(t))^2+\|\p_t^{2N-7}u\|^2_{L^2(\cK_{R+2N})}+\ve_1^4(1+t)^{2\ve_2-2},
\end{split}
\end{equation}
where we have used $k\le2N-9$, $\Delta=\p_t^2-\Box$ and the estimate of $\Box\p_t^ku$ as in \eqref{loc:energy2}.
On the other hand, by \eqref{u:loc:pw} and \eqref{LocEnergydt}, one has
\begin{equation}\label{loc:energy6}
\begin{split}
E_{2N-8}^{loc}(t)&\ls\sum_{j\le2N-7}\|\p_t^ju\|_{L^2(\cK_{R+2N})}\\
&\ls\sum_{j\le2N-8}\|\p_t\p_t^ju\|_{L^2(\cK_{R+2N})}+\|u\|_{L^2(\cK_{R+2N})}\\
&\ls(\ve+\ve_1^2)(1+t)^{4\ve_2-1/2}+(\ve+\ve_1^2)(1+t)^{-1}.
\end{split}
\end{equation}
Hence, \eqref{loc:energy} can be obtained by \eqref{LocEnergydt}, \eqref{loc:energy3}, \eqref{loc:energy4}, \eqref{loc:energy5} and \eqref{loc:energy6}.
\end{proof}

\begin{lemma}
Under the assumptions of Theorem \ref{thm1}, let $u$ be the solution of \eqref{QWE} and suppose that \eqref{BA1}-\eqref{BA3} hold.
Then
\begin{equation}\label{energy:B}
\sum_{|a|\le2N-10}\|\p Z^au\|_{L^2(\cK)}\ls\ve_1(1+t)^{4\ve_2}.
\end{equation}
\end{lemma}
\begin{proof}
The proof of \eqref{energy:B} is analogous to that of \eqref{energy:A} with a better estimate than \eqref{energy:A2}
as follows
\begin{equation}\label{energy:B1}
\begin{split}
\sum_{|a|\le2N-10}|\cB^a_1|&\ls\sum_{|b|\le|a|+1\le2N-9}\int_0^t\|\p \p^bu(s)\|^2_{L^2(\cK_2)}ds\\
&\ls\ve_1^2\int_0^t(1+s)^{8\ve_2-1}ds\ls\ve_1^2(1+t)^{8\ve_2},
\end{split}
\end{equation}
where $\cB^a_1$ is defined in \eqref{energy:A1} and \eqref{loc:energy} is used.
As in \eqref{energy:A3}, there is constant $C_2>0$ such that
\begin{equation*}
\sum_{|a|\le2N-10}\|\p Z^au\|^2_{L^2(\cK)}\le C_2\ve_1^2(1+t)^{8\ve_2}+C_2\ve_1\int_0^t\sum_{|a|\le2N-10}\frac{\|\p Z^au(s)\|^2_{L^2(\cK)}ds}{1+s}.
\end{equation*}
For sufficiently small $\ve_1>0$, we have
\begin{equation}\label{energy:B2}
\sum_{|a|\le2N-10}\|\p Z^au\|^2_{L^2(\cK)}\le C_2\ve_1^2(1+t)^{8\ve_2}+\ve_2\int_0^t\sum_{|a|\le2N-10}\frac{\|\p Z^au(s)\|^2_{L^2(\cK)}ds}{1+s}.
\end{equation}
Applying Lemma \ref{lem:Gronwall} with $B=\ve_2<D=8\ve_2$ to \eqref{energy:B2} yields \eqref{energy:B}.
\end{proof}

\begin{lemma}\label{lem:loc:energy2}
Under the assumptions of Theorem \ref{thm1}, let $u$ be the solution of \eqref{QWE} and suppose that \eqref{BA1}-\eqref{BA3} hold.
Then one has
\begin{equation}\label{loc:impr:A}
\sum_{|a|\le2N-17}\|\p^au\|_{L^2(\cK_R)}\ls(\ve+\ve_1^2)(1+t)^{7\ve_2-1}.
\end{equation}
\end{lemma}
\begin{proof}
The proof is analogous to that of \eqref{LocEnergydt} by a better estimate than \eqref{LocEnergydt3}.
To this end, according to \eqref{Sobo:ineq} and \eqref{energy:B}, \eqref{LocEnergydt2} can be improved as
\begin{equation}\label{loc:impr:A1}
\sum_{|a|\le2N-12}|\p Z^au|\ls\ve_1\w{x}^{-1/2}(1+t)^{4\ve_2}.
\end{equation}
With \eqref{loc:impr:A1} instead of \eqref{LocEnergydt2}, \eqref{LocEnergydt3} may be improved to
\begin{equation}\label{loc:impr:A2}
\begin{split}
&\sum_{|a|\le2N-17}\sum_{|d|\le3}|Z^d\Box V_a(s,y)|
\ls\ve_1^2\w{y}^{-3/2}(1+s)^{4\ve_2}\w{s+|y|}^{\ve_2-1/2}\w{s-|y|}^{\ve_2-1/2}\\
&\quad+\ve_1^3(1+s)^{4\ve_2}(\w{y}^{-3/2}\w{s-|y|}^{-2}+\w{y}^{-1}\w{s+|y|}^{\ve_2-1/2}\w{s-|y|}^{\ve_2-3/2}).
\end{split}
\end{equation}
Thus, by applying \eqref{loc:dt:sharp} to $\Box V_a$ with $Z^a=\p_t^j$, $j=|a|\le2N-17$, \eqref{initial:data}, \eqref{eqn:good}, \eqref{BA1}, \eqref{u:loc:pw}, \eqref{loc:energy} and \eqref{loc:impr:A2}, we arrive at
\begin{equation*}
\sum_{|a|\le2N-17}\|\p_tV_a\|_{L^\infty(\cK_R)}\ls(\ve+\ve_1^2)(1+t)^{7\ve_2-1}.
\end{equation*}
This, together with \eqref{def:goodunknown}, \eqref{BA1}, \eqref{u:loc:pw}, \eqref{loc:energy}, yields
\begin{equation}\label{loc:impr:A3}
\sum_{j\le2N-17}\|\p_t\p_t^ju\|_{L^\infty(\cK_R)}\ls(\ve+\ve_1^2)(1+t)^{7\ve_2-1}.
\end{equation}
Finally, \eqref{loc:impr:A} can be obtained by the same method as in the proof of \eqref{loc:energy} together with \eqref{u:loc:pw} and \eqref{loc:impr:A3}.
\end{proof}

\section{Improved pointwise estimates and proof of Theorem \ref{thm1}}\label{sect6}

In this section, we improve the pointwise estimates \eqref{BA1}-\eqref{BA3} via the $L^\infty-L^\infty$ estimates established
in Section \ref{sect4} with \eqref{loc:impr:A1}, which is derived by the energy estimates together with
the $L^2-L^\infty$ estimate \eqref{Sobo:ineq}.

\subsection{Decay estimates of the good derivatives}

It is pointed out that the estimate of $Z^au$ will play an essential role in the decay estimates of the good derivatives,
which will be derived in terms of \eqref{InLW:pw}.

\begin{lemma}\label{lem:BA3:A}
Under the assumptions of Theorem \ref{thm1}, let $u$ be the solution of \eqref{QWE} and suppose that \eqref{BA1}-\eqref{BA3} hold.
Then one has
\begin{equation}\label{BA3:impr:A}
\sum_{|a|\le2N-18}|Z^au|\ls(\ve+\ve_1^2)\w{t+|x|}^{5\ve_2}.
\end{equation}
\end{lemma}
\begin{proof}
Due to the lack of the estimates on the  higher order derivatives $Z^au$ in \eqref{eqn:good} and \eqref{eqn:good:div}, we will adopt the equation \eqref{eqn:high} in priority.
Applying \eqref{InLW:pw} to $\Box\tilde Z^au=\Box(\tilde Z^a-Z^a)u+\Box Z^au$ for $|a|\le2N-18$ and $\mu=\nu=\ve_2/4$ yields
\begin{equation}\label{BA3:impr:A1}
\begin{split}
&\frac{\w{t+|x|}^{1/2}}{\ln^2(2+t+|x|)}|\tilde Z^au|\ls\ve+\sum_{|b|\le4}\sup_{s\in[0,t]}\w{s}^{1/2+\ve_2/2}\|\p^b\Box(\tilde Z^a-Z^a)u(s)\|_{L^2(\cK_4)}\\
&\qquad+\sum_{|b|\le5}\sup_{(s,y)\in[0,t]\times(\overline{\R^2\setminus\cK_2})}\w{y}^{1/2}\cW_{1+\ve_2/2,1}(s,y)|\p^b\Box Z^au(s,y)|,
\end{split}
\end{equation}
where we have used \eqref{initial:data} and the fact $\tilde Z=Z$ in the region $|x|\ge1$.
Then from \eqref{loc:energy}, one has
\begin{equation}\label{BA3:impr:A2}
\sum_{|a|\le2N-18}\sum_{|b|\le4}\sup_{s\in[0,t]}\w{s}^{1/2+\ve_2/2}\|\p^b\Box(\tilde Z^a-Z^a)u(s)\|_{L^2(\cK_4)}
\ls(\ve+\ve_1^2)(1+t)^{9\ve_2/2}.
\end{equation}
For the second line of \eqref{BA3:impr:A1}, it can be concluded from \eqref{eqn:high}, \eqref{BA1} and \eqref{loc:impr:A1} that
\begin{equation}\label{BA3:impr:A3}
\sum_{|a|\le2N-18}\sum_{|b|\le5}|Z^b\Box Z^au(s,y)|
\ls\ve_1^2\w{y}^{-1}\w{s-|y|}^{-1}(1+s)^{4\ve_2}.
\end{equation}
Substituting \eqref{BA3:impr:A2} and \eqref{BA3:impr:A3} into \eqref{BA3:impr:A1} leads to
\begin{equation*}
\sum_{|a|\le2N-18}|\tilde Z^au|\ls(\ve+\ve_1^2)\w{t+|x|}^{5\ve_2}.
\end{equation*}
This, together with \eqref{loc:energy}, finishes the proof of \eqref{BA3:impr:A}.
\end{proof}

\begin{lemma}\label{lem:BA2:A}
Under the assumptions of Theorem \ref{thm1}, let $u$ be the solution of \eqref{QWE} and suppose that \eqref{BA1}-\eqref{BA3} hold.
Then one has that for $|x|\ge1+t/2$,
\begin{equation}\label{BA2:impr:A}
\sum_{|a|\le2N-20}|\bar\p Z^au|\ls(\ve+\ve_1^2)\w{t+|x|}^{5\ve_2-1}.
\end{equation}
\end{lemma}
\begin{proof}
The proof mainly follows from Section 4.6 in \cite{Kubo13} by utilizing the better decay estimate \eqref{BA3:impr:A}.
Set $\p_{\pm}:=\p_t\pm\p_r$ and $r=|x|$. It is easy to check that
\begin{equation}\label{BA2:improv1}
\begin{split}
\bar\p_1&=\frac{x_1}{r}\p_t+\frac{1}{r^2}(x_1^2+x_2^2)\p_1
=\frac{x_1}{r}\p_t+\frac{1}{r^2}[x_1(x_1\p_1+x_2\p_2)-x_1x_2\p_2+x_2^2\p_1]\\
&=\frac{x_1}{r}\p_+-\frac{x_2}{r^2}\Omega,\\
\bar\p_2&=\frac{x_2}{r}\p_++\frac{x_1}{r^2}\Omega.
\end{split}
\end{equation}
We now focus on the estimate of $\p_+Z^au$.
Note that
\begin{equation}\label{BA2:improv2}
r^{1/2}\Box w=\p_-\p_+(r^{1/2}w)-r^{-3/2}(w/4+\Omega^2w).
\end{equation}
For fixed $t,x$ with $r=|x|\ge1+t/2$, integrating \eqref{BA2:improv2} along the integral curve of $\p_-$ yields that
\begin{equation}\label{BA2:improv3}
\begin{split}
&\quad\;\p_+(r^{1/2}w)(t,r\frac{x}{|x|})-\p_+(r^{1/2}w)(0,(r+t)\frac{x}{|x|})\\
&=\int_0^t\p_-\p_+((r+t-s)^{1/2}w)(s,(r+t-s)\frac{x}{|x|})ds\\
&=\int_0^t\{(r+t-s)^{1/2}\Box w+(r+t-s)^{-3/2}(w/4+\Omega^2w)\}(s,(r+t-s)\frac{x}{|x|})ds.
\end{split}
\end{equation}
Set $w=Z^au$ with $|a|\le2N-20$ in \eqref{BA2:improv3}. Then it follows from \eqref{BA3:impr:A3} that for $|y|\ge1+s/2$,
\begin{equation}\label{BA2:impr:A2}
|\Box Z^au(s,y)|\ls\ve_1^2\w{s+|y|}^{4\ve_2-1}\w{s-|y|}^{-1}.
\end{equation}
Substituting \eqref{initial:data}, \eqref{BA3:impr:A} and \eqref{BA2:impr:A2} into \eqref{BA2:improv3} yields that for $|x|\ge1+t/2$,
\begin{equation*}
\begin{split}
|\p_+(r^{1/2}Z^au)(t,x)|&\ls\ve\w{x}^{-1}+\ve_1^2(r+t)^{4\ve_2-1/2}\int_0^t(1+|r+t-2s|)^{-1}ds\\
&\quad+(\ve+\ve_1^2)\int_0^t(r+t-s)^{-3/2}\w{r+t}^{5\ve_2}ds\\
&\ls(\ve+\ve_1^2)\w{t+|x|}^{5\ve_2-1/2},
\end{split}
\end{equation*}
which derives
\begin{equation*}
\begin{split}
|\p_+Z^au|&\ls r^{-1/2}|\p_+(r^{1/2}Z^au)(t,x)|+r^{-1}|Z^au(t,x)|\\
&\ls(\ve+\ve_1^2)\w{t+|x|}^{5\ve_2-1}.
\end{split}
\end{equation*}
This, together with \eqref{BA3:impr:A} and \eqref{BA2:improv1}, completes the proof \eqref{BA2:impr:A}.
\end{proof}

\begin{lemma}
Under the assumptions of Theorem \ref{thm1}, let $u$ be the solution of \eqref{QWE} and suppose that \eqref{BA1}-\eqref{BA3} hold.
Then we have
\begin{equation}\label{BA3:impr:B}
\sum_{|a|\le2N-24}|Z^au|\ls(\ve+\ve_1^2)\w{t+|x|}^{7\ve_2-1/2}.
\end{equation}
\end{lemma}
\begin{proof}
Applying \eqref{InLW:pw} to \eqref{eqn:good} for $|a|\le2N-24$ and $\mu=\nu=\ve_2/4$ yields
\begin{equation}\label{BA3:impr:B1}
\begin{split}
&\frac{\w{t+|x|}^{1/2}}{\ln^2(2+t+|x|)}|V_a|\ls\ve+\sum_{|d|\le4}\sup_{s\in[0,t]}\w{s}^{1/2+\ve_2}\|\p^d\Box V_a(s)\|_{L^2(\cK_4)}\\
&\qquad+\sum_{|d|\le5}\sup_{(s,y)\in[0,t]\times(\overline{\R^2\setminus\cK_2})}\w{y}^{1/2}\cW_{1+\ve_2/2,1}(s,y)|\p^d\Box V_a(s,y)|,
\end{split}
\end{equation}
where we have used \eqref{initial:data}.
Analogously to \eqref{BA3:impr:A2}, one can obtain from \eqref{loc:energy} that
\begin{equation}\label{BA3:impr:B2}
\sum_{|a|\le2N-24}\sum_{|b|\le4}\sup_{s\in[0,t]}\w{s}^{1/2+\ve_2}\|\p^b\Box V_a(s)\|_{L^2(\cK_4)}
\ls(\ve+\ve_1^2)(1+t)^{5\ve_2}.
\end{equation}
Next, we turn to the estimate of the second line in \eqref{BA3:impr:B1} and deal with the terms $\Box\p_{\mu}Z^buZ^cu$ and $|x|^{-1}C(\omega)\p^2_{\mu\nu}Z^bu\Omega Z^cu$ in the third and fourth lines of \eqref{eqn:good}, respectively.
By Lemmas \ref{lem:eqn:high}-\ref{lem:null:structure}, \eqref{BA1}-\eqref{BA3}, \eqref{loc:impr:A1} and \eqref{BA3:impr:A},
we arrive at
\begin{equation*}
\begin{split}
&\quad \sum_{|b|+|c|\le2N-19}(|\Box\p_{\mu}Z^buZ^cu|+|x|^{-1}|C(\omega)\p^2_{\mu\nu}Z^bu\Omega Z^cu|)\\
&\ls\ve_1^3\w{y}^{-1}(\w{s+|y|}^{6\ve_2-1}\w{s-|y|}^{-1}
+\w{s+|y|}^{\ve_2-1/2}\w{s-|y|}^{\ve_2-3/2}(1+s)^{4\ve_2})\\
&+\ve_1^2\w{y}^{-3/2}\w{s-|y|}^{-1}\w{s+|y|}^{6\ve_2}.
\end{split}
\end{equation*}
Analogously, one can achieve
\begin{equation}\label{BA3:impr:B3}
\sum_{|a|\le2N-24}\sum_{|d|\le5}|Z^d\Box V_a(s,y)|\ls\ve_1^2\w{y}^{-3/2}\w{s-|y|}^{-1}(1+s)^{6\ve_2}.
\end{equation}
Substituting \eqref{BA3:impr:B2} and \eqref{BA3:impr:B3} into \eqref{BA3:impr:B1} leads to
\begin{equation}\label{BA3:impr:B4}
\sum_{|a|\le2N-24}|V_a|\ls(\ve+\ve_1^2)\w{t+|x|}^{7\ve_2-1/2}.
\end{equation}
Then it follows from \eqref{def:goodunknown}, \eqref{BA1}, \eqref{BA3}, \eqref{loc:impr:A}, \eqref{loc:impr:A1}, \eqref{BA3:impr:A} and \eqref{BA3:impr:B4} that
\begin{equation*}
\begin{split}
\sum_{|a|\le2N-24}|Z^au|&\ls(\ve+\ve_1^2)\w{t+|x|}^{7\ve_2-1/2}+\ve_1^2\w{t}^{4\ve_2}\w{x}^{-1/2}\w{t+|x|}^{\ve_2-1/2}\\
&\ls(\ve+\ve_1^2)\w{t+|x|}^{7\ve_2-1/2}.
\end{split}
\end{equation*}
This completes the proof of \eqref{BA3:impr:B}.
\end{proof}

\begin{lemma}
Under the assumptions of Theorem \ref{thm1}, let $u$ be the solution of \eqref{QWE} and suppose that \eqref{BA1}-\eqref{BA3} hold.
Then one has that for $|x|\ge1+t/2$
\begin{equation}\label{BA2:impr:B}
\sum_{|a|\le2N-26}|\bar\p Z^au|\ls(\ve+\ve_1^2)\w{t+|x|}^{7\ve_2-3/2}.
\end{equation}
\end{lemma}
\begin{proof}
By choosing $w=V_a$ with $|a|\le2N-26$ in \eqref{BA2:improv3}, it follows from \eqref{BA3:impr:B3} that for $|y|\ge1+s/2$,
\begin{equation}\label{BA2:impr:B1}
|\Box V_a(s,y)|\ls\ve_1^2\w{s+|y|}^{6\ve_2-3/2}\w{s-|y|}^{-1}.
\end{equation}
Substituting \eqref{initial:data}, \eqref{BA3:impr:B} and \eqref{BA2:impr:B1} into \eqref{BA2:improv3} yields that for $|x|\ge1+t/2$,
\begin{equation*}
\begin{split}
|\p_+(r^{1/2}V_a)(t,x)|&\ls\ve\w{x}^{-1}+\ve_1^2(r+t)^{6\ve_2-1}\int_0^t(1+|r+t-2s|)^{-1}ds\\
&\quad+(\ve+\ve_1^2)\int_0^t(r+t-s)^{-3/2}\w{r+t}^{7\ve_2-1/2}ds\\
&\ls(\ve+\ve_1^2)\w{t+|x|}^{7\ve_2-1}.
\end{split}
\end{equation*}
Together with \eqref{def:goodunknown}, \eqref{loc:impr:A1} and \eqref{BA3:impr:B}, this leads to
\begin{equation}\label{BA2:impr:B2}
\begin{split}
|\p_+V_a|&\ls r^{-1/2}|\p_+(r^{1/2}V_a)(t,x)|+r^{-1}|V_a(t,x)|\\
&\ls(\ve+\ve_1^2)\w{t+|x|}^{7\ve_2-3/2}.
\end{split}
\end{equation}
Collecting \eqref{BA2:impr:B2} with \eqref{loc:impr:A1}, \eqref{BA2:improv1} and \eqref{BA3:impr:B} derives
\begin{equation}\label{BA2:impr:B3}
\sum_{|a|\le2N-26}|\bar\p V_a|\ls(\ve+\ve_1^2)\w{t+|x|}^{7\ve_2-3/2}.
\end{equation}
By the definition \eqref{def:goodunknown}, \eqref{BA1}-\eqref{BA3}, \eqref{loc:impr:A1}, \eqref{BA2:impr:A}, \eqref{BA3:impr:B} and \eqref{BA2:impr:B3},
we can obtain
\begin{equation*}
\begin{split}
\sum_{|a|\le2N-26}|\bar\p Z^au|
&\ls(\ve+\ve_1^2)\w{t+|x|}^{7\ve_2-3/2}+\sum_{|b|+|c|\le2N-26}|\bar\p Z^bu||\p Z^cu|\\
&\quad+\sum_{|b|+|c|\le2N-26}|Z^cu|(|\bar\p\p Z^bu|+\w{x}^{-1}|\p Z^bu|)\\
&\ls(\ve+\ve_1^2)\w{t+|x|}^{7\ve_2-3/2}.
\end{split}
\end{equation*}
This completes the proof of \eqref{BA2:impr:B}.
\end{proof}

\subsection{Crucial pointwise estimates}

\begin{lemma}
Under the assumptions of Theorem \ref{thm1}, let $u$ be the solution of \eqref{QWE} and suppose that \eqref{BA1}-\eqref{BA3} hold.
Then we have
\begin{equation}\label{BA1:impr:A}
\sum_{|a|\le2N-37}|\p Z^au|\ls(\ve+\ve_1^2)\w{x}^{-1/2}\w{t-|x|}^{-1}\w{t+|x|}^{10\ve_2}.
\end{equation}
\end{lemma}
\begin{proof}
Utilizing \eqref{InLW:dpw} to \eqref{eqn:good} for $|a|\le2N-24$ and $\mu=\nu=\ve_2/4$ yields
\begin{equation}\label{BA1:impr:A1}
\begin{split}
&\w{x}^{1/2}\w{t-|x|}^{1/2-\ve_2/4}|\p V_a|
\ls\ve+\sum_{|b|\le3}\sup_{s\in[0,t]}\w{s}^{1/2}\|\p^b\Box V_a(s)\|_{L^2(\cK_4)}\\
&\qquad+\sum_{|b|\le4}\sup_{(s,y)\in[0,t]\times(\overline{\R^2\setminus\cK_2})}\w{y}^{1/2}\cW_{1+\ve_2/2,1}(s,y)|Z^b\Box V_a(s,y)|,
\end{split}
\end{equation}
where we have used \eqref{initial:data}.
It follows from \eqref{eqn:good} and \eqref{loc:impr:A} that
\begin{equation}\label{BA1:impr:A2}
\sum_{|a|\le2N-24}\sum_{|b|\le3}\sup_{s\in[0,t]}\w{s}^{1-7\ve_2}\|\p^b\Box V_a(s)\|_{L^2(\cK_4)}
\ls\ve+\ve_1^2.
\end{equation}
Substituting \eqref{BA3:impr:B3} and \eqref{BA1:impr:A2} into \eqref{BA1:impr:A1} gives
\begin{equation*}
\sum_{|a|\le2N-24}|\p V_a|\ls(\ve+\ve_1^2)\w{x}^{-1/2}\w{t-|x|}^{-1/2}(1+t)^{7\ve_2}.
\end{equation*}
This, together with \eqref{def:goodunknown}, \eqref{BA1}, \eqref{BA3}, \eqref{loc:impr:A}, \eqref{loc:impr:A1}, \eqref{BA3:impr:B}
and the Sobolev embedding, derives
\begin{equation}\label{BA1:impr:A3}
\sum_{|a|\le2N-24}|\p Z^au|\ls(\ve+\ve_1^2)\w{x}^{-1/2}\w{t-|x|}^{-1/2}(1+t)^{7\ve_2}.
\end{equation}
Applying \eqref{InLW:dpw:div} to \eqref{eqn:good:div} for $|a|\le2N-37$ and $\mu=\nu=\ve_2$ leads to
\begin{equation}\label{BA1:impr:A4}
\begin{split}
&\w{x}^{1/2}\w{t-|x|}|\p V_a|\ls(\ve+\ve_1^2)\w{t}^{7\ve_2}
+\sum_{i=1,2}\sup_{s\in[0,t]}\w{s}^{3/2+\ve_2}\|G_a^i(s)\|_{L^\infty(\cK_3)}\\
&\quad +\sup_{(s,y)\in[0,t]\times(\overline{\R^2\setminus\cK_2})}\w{y}^{1/2}\cW_{3/2+\ve_2,1}(s,y)|G_a(s,y)|\\
&\quad +\sum_{\alpha=0}^2\sup_{(s,y)\in[0,t]\times(\overline{\R^2\setminus\cK_2})}
\w{y}^{1/2}\cW_{1+2\ve_2,1}(s,y)|(G_a,G_a^\theta,G_a^\alpha)(s,y)|,
\end{split}
\end{equation}
where we have used \eqref{initial:data} and \eqref{BA1:impr:A2}, and $(G_a,G_a^\theta,G_a^\alpha)$ is given by
\begin{equation}\label{BA1:impr:A5}
\begin{split}
G_a&:=\sum_{|d|\le9}\sum_{|b|+|c|\le|a|}\{|Z^d(Z^bu\Box\p Z^cu)|+|Z^d(\p Z^bu\Box Z^cu)|\\
&\qquad+\frac{|Z^d(Z^bu\p Z^cu)|+|Z^d(Z^buZ\p Z^cu)|+|Z^d(ZZ^bu\p Z^cu)|}{|x|^2}\},\\
G_a^{\Xi}&:=\sum_{|d|\le10}\sum_{|b|+|c|\le|a|}\frac{Z^d(Z\p Z^buZ^cu)}{|x|},\qquad \Xi=\theta,0,1,2.
\end{split}
\end{equation}
By Lemmas \ref{lem:eqn:high}-\ref{lem:null:structure}, \eqref{BA1}-\eqref{BA3}, \eqref{BA3:impr:B}, \eqref{BA2:impr:B} and \eqref{BA1:impr:A3},
one has that for $|y|\ge1+s/2$,
\begin{equation}\label{BA1:impr:A6}
\begin{split}
|G_a|&\ls\ve_1^3\w{y}^{9\ve_2-5/2}\w{s-|y|}^{\ve_2-1}+\ve_1^2\w{y}^{8\ve_2-3}\w{s-|y|}^{\ve_2-1}\\
&\ls\ve_1^2\w{y}^{9\ve_2-5/2}\w{s-|y|}^{\ve_2-1}.
\end{split}
\end{equation}
In the region $|y|\le1+s/2$, it follows from \eqref{BA1}, \eqref{BA3}, \eqref{BA3:impr:B} and \eqref{BA1:impr:A3} that
\begin{equation}\label{BA1:impr:A7}
\begin{split}
|G_a|&\ls\ve_1^3\w{y}^{-1}\w{s}^{9\ve_2-5/2}+\ve_1^2\w{y}^{-5/2}\w{s}^{9\ve_2-3/2}\\
&\ls\ve_1^2\w{y}^{-2}\w{s}^{9\ve_2-3/2}.
\end{split}
\end{equation}
By using \eqref{BA1}, \eqref{BA3}, \eqref{BA3:impr:B} and \eqref{BA1:impr:A3} again, we have
\begin{equation}\label{BA1:impr:A8}
|G_a^{\Xi}|\ls\ve_1^2\w{y}^{-3/2}\w{s-|y|}^{\ve_2-1}\w{s+|y|}^{8\ve_2-1/2}.
\end{equation}
Collecting \eqref{BA1:impr:A4}-\eqref{BA1:impr:A8} with \eqref{def:goodunknown}, \eqref{BA1}, \eqref{BA3}, \eqref{loc:impr:A}, 
\eqref{BA3:impr:B} and \eqref{BA1:impr:A3} yields \eqref{BA1:impr:A}.
\end{proof}

\begin{lemma}\label{lem:loc:B}
Under the assumptions of Theorem \ref{thm1}, let $u$ be the solution of \eqref{QWE} and suppose that \eqref{BA1}-\eqref{BA3} hold.
Then we have
\begin{equation}\label{loc:impr:B}
\sum_{|a|\le2N-41}\|\p^au\|_{L^2(\cK_R)}\ls(\ve+\ve_1^2)(1+t)^{-1}.
\end{equation}
\end{lemma}
\begin{proof}
It follows from \eqref{BA2:impr:B} and \eqref{BA1:impr:A} that
\begin{equation}\label{loc:impr:B1}
\sum_{|a|\le2N-37}|\bar\p Z^au|\ls\ve_1\w{x}^{-1/2}\w{t+|x|}^{10\ve_2-1}.
\end{equation}
Let $Z^a=\p_t^j$ with $j=|a|$ in \eqref{eqn:high}.
Applying \eqref{loc:dt:sharp} to \eqref{eqn:high} instead of \eqref{eqn:good} with $j\le2N-41$, $\mu=\nu=\ve_2/2$ and
utilizing \eqref{initial:data}, one has that for $R>1$,
\begin{equation}\label{loc:impr:B2}
\begin{split}
\w{t}\sum_{j\le2N-41}\|\p_t\p_t^ju\|_{L^\infty(\cK_R)}
\ls\ve+\sum_{|a|\le2N-41}\sum_{|b|\le2}\sup_{s\in[0,t]}\w{s}\|\p^b\Box Z^au(s)\|_{L^2(\cK_3)}\\
+\sum_{|a|\le2N-41}\sum_{|b|\le3}\sup_{(s,y)\in[0,t]\times\cK}\w{y}^{1/2}\cW_{1+\ve_2,1}(s,y)|Z^b\Box Z^au(s,y)|.
\end{split}
\end{equation}
Although we have chosen $Z^a=\p_t^j$, from Lemmas \ref{lem:eqn:high}-\ref{lem:null:structure}, \eqref{BA1:impr:A} and \eqref{loc:impr:B1},
it actually holds that for $Z\in\{\p,\Omega\}$,
\begin{equation}\label{loc:impr:B3}
\sum_{|a|\le2N-41}\sum_{|b|\le3}|Z^b\Box Z^au|\ls\ve_1^2\w{y}^{-1}\w{s-|y|}^{-1}\w{s+|y|}^{20\ve_2-1}.
\end{equation}
Substituting \eqref{loc:impr:B3} into \eqref{loc:impr:B2} with an argument as \eqref{loc:energy} yields \eqref{loc:impr:B}.
\end{proof}

\begin{lemma}\label{lem:BA3:C}
Under the assumptions of Theorem \ref{thm1}, let $u$ be the solution of \eqref{QWE} and suppose that \eqref{BA1}-\eqref{BA3} hold.
Then one has
\begin{equation}\label{BA3:impr:C}
\sum_{|a|\le2N-43}|Z^au|\ls(\ve+\ve_1^2)\w{t+|x|}^{\ve_2/2-1/2}\w{t-|x|}^{-0.4}.
\end{equation}
\end{lemma}
\begin{proof}
The proof of the estimate \eqref{BA3:impr:C} is similar to that of \eqref{BA3:impr:A} and \eqref{BA3:impr:B}.
In fact, applying \eqref{InLW:pw} to $\Box\tilde Z^au=\Box(\tilde Z^a-Z^a)u+\Box Z^au$ with $\mu=0.4$, $\nu=\ve_2$, $|a|\le2N-43$, \eqref{initial:data}, \eqref{loc:impr:A} and \eqref{loc:impr:B3} yields \eqref{BA3:impr:C}.
\end{proof}

\begin{lemma}[Improvement of \eqref{BA1}]\label{lem:NL:pw}
Under the assumptions of Theorem \ref{thm1}, let $u$ be the solution of \eqref{QWE} and suppose that \eqref{BA1}-\eqref{BA3} hold.
Then it holds that
\begin{equation}\label{BA1:impr:B}
\sum_{|a|\le2N-53}|\p Z^au|\ls(\ve+\ve_1^2)\w{x}^{-1/2}\w{t-|x|}^{-1}.
\end{equation}
\end{lemma}
\begin{proof}
Utilizing \eqref{InLW:dpw:div} to \eqref{eqn:good:div} for $|a|\le2N-53$ and $\mu=\nu=\ve_2$ yields
\begin{equation}\label{BA1:impr:B1}
\begin{split}
&\w{x}^{1/2}\w{t-|x|}|\p V_a|\ls\ve+\ve_1^2
+\sum_{i=1,2}\sup_{s\in[0,t]}\w{s}^{3/2+\ve_2}\|G_a^i(s)\|_{L^\infty(\cK_3)}\\
&\quad +\sup_{(s,y)\in[0,t]\times(\overline{\R^2\setminus\cK_2})}\w{y}^{1/2}\cW_{3/2+\ve_2,1}(s,y)|G_a(s,y)|\\
&\quad +\sum_{\alpha=0}^2\sup_{(s,y)\in[0,t]\times(\overline{\R^2\setminus\cK_2})}
\w{y}^{1/2}\cW_{1+2\ve_2,1}(s,y)|(G_a,G_a^\theta,G_a^\alpha)(s,y)|,
\end{split}
\end{equation}
where we have used \eqref{initial:data} and \eqref{loc:impr:B}.
Note that based on \eqref{BA1:impr:A} and \eqref{BA3:impr:C}, \eqref{BA1:impr:A7} and \eqref{BA1:impr:A8} can be improved as follows
\begin{equation}\label{BA1:impr:B2}
\begin{split}
|G_a|&\ls\ve_1^3\w{y}^{-1}\w{s}^{9\ve_2-5/2}+\ve_1^2\w{y}^{-5/2}\w{s}^{-1.8},\qquad |y|\le1+s/2,\\
|G_a^{\Xi}|&\ls\ve_1^2\w{y}^{-3/2}\w{s-|y|}^{-1.4}\w{s+|y|}^{11\ve_2-1/2}.
\end{split}
\end{equation}
Therefore, combining \eqref{BA1:impr:B1}-\eqref{BA1:impr:B2} with \eqref{def:goodunknown}, \eqref{BA1:impr:A}, \eqref{loc:impr:B} and \eqref{BA3:impr:C} yields \eqref{BA1:impr:B}.
\end{proof}

\begin{lemma}[Improvements of \eqref{BA2} and \eqref{BA3}]\label{lem:BA3:D}
Under the assumptions of Theorem \ref{thm1}, let $u$ be the solution of \eqref{QWE} and suppose that \eqref{BA1}-\eqref{BA3} hold.
Then we have
\begin{equation}\label{BA2:impr:C}
\sum_{|a|\le2N-53}|\bar\p Z^au|\ls(\ve+\ve_1^2)\w{x}^{-1/2}\w{t+|x|}^{\ve_2/2-1}
\end{equation}
and
\begin{equation}\label{BA3:impr:D}
\sum_{|a|\le2N-59}|Z^au|\ls(\ve+\ve_1^2)\w{t+|x|}^{\ve_2-1/2}\w{t-|x|}^{\ve_2-1/2}.
\end{equation}
\end{lemma}
\begin{proof}
The estimate \eqref{BA2:impr:C} can be obtained as for \eqref{BA2:impr:A} and \eqref{BA2:impr:B} with \eqref{loc:impr:B3}, \eqref{BA3:impr:C} and \eqref{BA1:impr:B}.
In terms of \eqref{BA1:impr:B} and \eqref{BA2:impr:C}, \eqref{loc:impr:B3} can be further improved to
\begin{equation}\label{BA3:impr:D1}
\sum_{|a|\le2N-59}\sum_{|b|\le5}|Z^b\Box Z^au|\ls\ve_1^2\w{y}^{-1}\w{s-|y|}^{-1}\w{s+|y|}^{\ve_2/2-1}.
\end{equation}
Applying \eqref{InLW:pw} to $\Box\tilde Z^au=\Box(\tilde Z^a-Z^a)u+\Box Z^au$ with $\mu=1/2-\ve_2$, $\nu=\ve_2/2$, \eqref{loc:impr:B} and \eqref{BA3:impr:D1} yields \eqref{BA3:impr:D}.
\end{proof}

\subsection{Proof of Theorem \ref{thm1}}
\begin{proof}[Proof of Theorem \ref{thm1}]
Collecting \eqref{BA1:impr:B}, \eqref{BA2:impr:C} and \eqref{BA3:impr:D} yields that there is $C_3\ge1$ such that
\begin{align*}
&\sum_{|a|\le2N-53}|\p Z^au|\le C_3(\ve+\ve_1^2)\w{x}^{-1/2}\w{t-|x|}^{-1},\\
&\sum_{|a|\le2N-53}|\bar\p Z^au|\le C_3(\ve+\ve_1^2)\w{x}^{-1/2}\w{t+|x|}^{\ve_2-1},\\
&\sum_{|a|\le2N-59}|Z^au|\le C_3(\ve+\ve_1^2)\w{t+|x|}^{\ve_2-1/2}\w{t-|x|}^{\ve_2-1/2}.
\end{align*}
Choosing $\ve_1=4C_3\ve$ and $\ve_0=\frac{1}{16C_3^2}$, then for $N\ge59$, \eqref{BA1}-\eqref{BA3} can be improved to
\begin{align*}
&\sum_{|a|\le N+1}|\p Z^au|\le\frac{\ve_1}{2}\w{x}^{-1/2}\w{t-|x|}^{-1},\\
&\sum_{|a|\le N+1}|\bar\p Z^au|\le\frac{\ve_1}{2}\w{x}^{-1/2}\w{t+|x|}^{\ve_2-1},\\
&\sum_{|a|\le N}|Z^au|\le\frac{\ve_1}{2}\w{t+|x|}^{\ve_2-1/2}\w{t-|x|}^{\ve_2-1/2}.
\end{align*}
This, together with the local existence of classical solution to the initial boundary value problem
of the hyperbolic equation, yields
that \eqref{QWE} has a global solution $u\in\bigcap\limits_{j=0}^{2N+1}C^{j}([0,\infty), H^{2N+1-j}(\cK))$.
Furthermore, \eqref{thm1:decay:a}-\eqref{thm1:decay:c} and \eqref{thm1:decay:LE} are obtained by \eqref{BA1:impr:B}, \eqref{BA2:impr:C}, \eqref{BA3:impr:D} and \eqref{loc:impr:B}, respectively.
\end{proof}

\vskip 0.2 true cm

{\bf \color{blue}{Conflict of Interest Statement:}}

\vskip 0.1 true cm

{\bf The authors declare that there is no conflict of interest in relation to this article.}

\vskip 0.2 true cm
{\bf \color{blue}{Data availability statement:}}

\vskip 0.1 true cm

{\bf  Data sharing is not applicable to this article as no data sets are generated
during the current study.}

\end{document}